\documentclass{article}
\usepackage[utf8]{inputenc}
\usepackage[affil-it]{authblk}
\usepackage{amsfonts,amsmath,amssymb} 
\usepackage{amsthm}
\usepackage[bibstyle=authortitle,backend=biber, style=numeric]{biblatex}
\addbibresource{flows.bib} 
\usepackage{hyperref}
\usepackage{geometry}
\geometry{a4paper,top=2.5cm,bottom=2.5cm,left=2.5cm,right=2.5cm,heightrounded,bindingoffset=5mm}
\usepackage{comment}
\usepackage{color}
\usepackage{esint}
\usepackage{bbm} 
\usepackage{graphicx}
\usepackage{xcolor}
\usepackage{tikz}
\usetikzlibrary{mindmap}
\graphicspath{ {./Brenier/} }
\theoremstyle{definition} \newtheorem{definition}{Definition}[section]
\theoremstyle{definition} \newtheorem{remark}[definition]{Remark}
\theoremstyle{plain} \newtheorem{lemma}[definition]{Lemma}
\theoremstyle{plain} \newtheorem{proposition}[definition]{Proposition}
\theoremstyle{plain} \newtheorem{theorem}[definition]{Theorem}
\theoremstyle{plain} \newtheorem{corollary}[definition]{Corollary}
\theoremstyle{definition} 
\theoremstyle{definition}

\DeclareMathOperator{\BV}{BV}
\DeclareMathOperator{\dive}{div}

\newcommand{\R}{\mathbb{R}}
\newcommand{\Q}{\mathbb{Q}}
\newcommand{\N}{\mathbb{N}}

\newcommand{\M}{\mathcal{M}}

\renewcommand{\L}{\mathscr L}

\renewcommand{\L}{\mathcal L}
\renewcommand{\H}{\mathcal H}

\numberwithin{equation}{section}
\def\Xint#1{\mathchoice
	{\XXint\displaystyle\textstyle{#1}}%
	{\XXint\textstyle\scriptstyle{#1}}%
	{\XXint\scriptstyle\scriptscriptstyle{#1}}%
	{\XXint\scriptscriptstyle\scriptscriptstyle{#1}}%
	\!\int}
\def\XXint#1#2#3{{\setbox0=\hbox{$#1{#2#3}{\int}$ }
		\vcenter{\hbox{$#2#3$ }}\kern-.6\wd0}}

\def\dashint{\Xint-}

\title{Geometric interpretation of the vanishing Lie Bracket for two-dimensional rough vector fields}
\author{\thanks{Max-Planck Institute for Mathematics in the Sciences} Rebucci A.\footnote{{annalaura.rebucci@mis.mpg.de}}, Zizza M., \footnote{martina.zizza@mis.mpg.de}}

\date{\today}

\begin{document}

\maketitle
\begin{abstract}
    In this paper, we prove that if $X,Y$ are continuous, Sobolev vector fields with bounded divergence on the real plane and $[X,Y]=0$, then their flows commute. In particular, we improve the previous result of \cite{ColomboTione}, where the authors require the additional assumption of the weak Lie differentiability on one of the two flows. We also discuss possible extensions to the $\BV$ setting.
\end{abstract}

\medskip

\noindent Keywords: Frobenius's Theorem, Lie Bracket, rough vector fields, Regular Lagrangian Flows. \\

\noindent MSC2020: 35A24, 35D30, 35F35, 35Q49.
 \\
\section{Introduction}
The aim of this paper is to extend a classical result on the commutativity of flows of smooth vector fields (in the Euclidean space) to the weak setting of Regular Lagrangian Flows. Recall that, if $X,Y\in C^\infty(\R^n,\R^n)\cap L^\infty(\R^n,\R^n)$ are two smooth vector fields, then their flows $\phi^X_t,\phi^Y_s$ are the solutions of the following Cauchy Problem
\begin{equation*}
    \begin{cases}
        \partial_t\phi^X_t(x)=X(\phi^X_t(x)),\\
        \phi^X_0(x)=x,
    \end{cases}
\end{equation*}
$\forall x\in\R^n$, (and the same for the flow $\phi^Y_s$). A classical result of differential geometry, essential to Frobenius' theorem, states that
\begin{equation}\label{eq:commutativity-classical}
  \phi_t^X \circ \phi_s^Y=\phi_s^Y \circ \phi_t^X , \quad \forall t,s \in \R \qquad \iff \qquad [X,Y]=0,
\end{equation}
where $[X,Y]$ can be defined pointwise as $[X,Y]=DYX-DXY$. The equivalence \eqref{eq:commutativity-classical} was proved in \cite{RampazzoSussmann} for locally Lipschitz vector fields, and was recently extended in \cite{RigoniStepanov} to the case where one vector field is Lipschitz continuous and the other is Sobolev regular (but admitting a Regular Langrangian Flow). The validity of \eqref{eq:commutativity-classical} to the weak setting of Regular Lagrangian Flows was first studied in \cite{ColomboTione}, wherein the authors prove that, if $X,Y\in W^{1,p}_{\text{loc}}(\R^n,\R^n)\cap L^\infty(\R^n,\R^n)$ with bounded divergence, then
 \begin{equation}\label{eq:defcommu}
 \left( \phi^X_t\circ\phi^Y_s \right)(z)=\left( \phi^Y_s\circ\phi^X_t \right)(z),  \quad\forall s,t\in\R,\text{ for a.e. } z\in\R^n  
 \end{equation}
 \begin{equation*}
     \iff 
 \end{equation*}
\begin{align*}
    [X,Y]=0 \text{ and }\phi^X_t \text{ is weakly (Lie) differentiable w.r.t. $Y$ with locally bounded derivative {for all $t \in \R$}.}
\end{align*}

For rough vector fields as in the above, the flows of the two vector fields are given by Regular Lagrangian Flows as in \cite{DiPerna:Lions},\cite{Ambrosio:BV} (see also Section \ref{S:preliminaries}). Moreover, we remark that the condition $\forall s,t\in\R$, for a.e. $z\in\R^n$ in \eqref{eq:defcommu} means that for every $s,t$ fixed, there exists $\Omega_{s,t}$ such that $\L^{\color{black}n}(\Omega_{s,t}^c)=0$ and the flows commute in $\Omega_{s,t}$. Hence, it is not restrictive to consider a bounded time interval, i.e. $t\in(-T,T)$ for some $T>0$.  We finally recall that the notion of weak Lie differentiability introduced in \cite{ColomboTione} has to be intended in the following sense: 
\begin{definition}\label{def:Lie:derivative}
    Let $F\in L^1(\R^n,\R^n)$ and let $Y\in L^\infty(\R^n,\R^n)$ be a vector field with $\dive Y\in  L^\infty(\R^n,\R)$. We say that $F$ is weakly (Lie) differentiable w.r.t. $Y$ if there exists $f\in L^1(\R^n,\R^n)$ such that for every $\varphi\in C^1_c(\R^n,\R^n)$ the following holds
    \begin{equation}\label{eq:Lie:der}
    \int_{R^n}\left(F(z),D\varphi(z) Y(z)\right)+\dive(Y)(z)\left(F(z),\varphi(z)\right)dz=-\int_{\R^n}\left(f(z),\varphi(z)\right)dz.
    \end{equation}
    We say that $f$ is the \emph{weak (Lie) derivative} of $F$ in the direction of $Y$. Moreover, if $f\in L^\infty_{\text{loc}}(\R^n,\R^n)$ we say that $F$ is weakly (Lie) differentiable w.r.t. $Y$ with locally bounded derivative. 
\end{definition}
In particular, if $F\in W^{1,p}_{\text{loc}}(\R^n,\R^n)$, then $F$ is weakly (Lie) differentiable. Indeed, the weak Lie derivative is the directional derivative with the direction given by the vector field $Y$. 

We point out that the assumption of weak Lie differentiability is a non-trivial requirement in the non-smooth setting. Indeed, in the case of rough regularity of the vector field (being either Sobolev or $\BV$), the flow may not be differentiable, namely it may not belong to any Sobolev/$\BV$ space, as first proved in \cite{jab16} through a random construction. We also mention \cite{Eliodiff}, where the author provides an example where the RLF fails to be Sobolev/$\BV$. Finally, it is worth mentioning that the validity of the equivalence \eqref{eq:commutativity-classical} in this weaker setting is a non-trivial problem. Indeed, in \cite{RigoniStepanov}, the authors construct a counterexample showing that \eqref{eq:commutativity-classical} cannot be extended to general a.e. differentiable vector fields admitting a.e. unique flows.

\bigskip

In this paper, we propose a purely geometric approach that allows us to remove the additional assumption of the weak Lie differentiability in the case of two-dimensional vector fields with bounded divergence. More precisely, we prove that the classical equivalence \eqref{eq:commutativity-classical} holds in dimension $n=2$, with the further assumption of the continuity of the two vector fields. Moreover, in Section \ref{Sect:BV} we explain how to generalize one of the implications in \eqref{eq:commutativity-classical} to the $\BV$ setting.

\smallskip

We start by describing the heuristics of our approach, in order to show in particular how  to drop the weak Lie differentiability assumption on the flow of $X$. Our construction draws inspiration from \cite{Eliodiff}. The core idea is that, if the vector field $X\in W^{1,p}(\R^2,\R^2)\cap C^0(\R^2,\R^2)$ is bounded, divergence-free and uniformly bounded from below, then its flow $\phi^X_t$ belongs to the Sobolev space $ W^{1,p}$. The key theorem in \cite{Eliodiff} is the following:
\begin{theorem}\label{thm:Marconi}[Theorem 1.4 in \cite{Eliodiff}]
    Let $X\in W^{1,p}(\R^2,\R^2)$ be a continuous divergence-free vector field with bounded support and let $\Omega$ be an open ball of radius $R>0$ such that there exists $\delta>0$ and $\mathbf e\in\mathbb{S}^1$ for which $X\cdot \mathbf e>\delta$ in $\Omega$. Then for every $t>0$ the Regular Lagrangian Flow of $X$ has a representative $\phi^X_t\in C^0(\Omega)\cap W^{1,p}(\Omega).$
\end{theorem}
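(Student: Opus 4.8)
The plan is to exploit the transversality condition $X\cdot\mathbf e>\delta$ to turn the two-dimensional flow into a family of one-dimensional graph flows, and then to leverage the divergence-free (Hamiltonian) structure of $X$ to control their regularity. After a rotation of coordinates I may assume $\mathbf e=e_1$, so that $X_1\geq\delta>0$ on $\Omega$. Along any integral curve $s\mapsto(\xi(s),\eta(s))$ of $X$ lying in $\Omega$ one has $\dot\xi=X_1\geq\delta$, so the first coordinate is strictly increasing: each trajectory meets a vertical line $\{x_1=c\}$ at most once while inside $\Omega$, and trajectories are graphs $x_2=\gamma(x_1)$ over the horizontal axis. This suggests reparametrizing the flow by the first coordinate: for $a<c$ I define the horizontal transport map $\Psi^{a,c}$ sending the height $b$ of a point $(a,b)$ to the height at which the trajectory through $(a,b)$ crosses $\{x_1=c\}$; it solves the scalar ODE $\frac{d}{dc}\Psi^{a,c}(b)=\frac{X_2}{X_1}(c,\Psi^{a,c}(b))$ with $\Psi^{a,a}(b)=b$.

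Since $X$ is divergence-free on the simply connected plane, I would write $X=\nabla^\perp H$ for a stream function $H\in W^{2,p}_{\loc}(\R^2)\cap C^1$, constant off a large ball; the condition $X_1\geq\delta$ reads $-\partial_2 H\geq\delta$, so $H(c,\cdot)$ is strictly decreasing for each fixed $c$, with slope bounded away from $0$. As $H$ is constant along trajectories, the horizontal transport map has the closed form $\Psi^{a,c}(b)=G(c,H(a,b))$, where $G(c,\cdot)$ denotes the inverse of $H(c,\cdot)$. Differentiating gives $\partial_b\Psi^{a,c}(b)=\frac{X_1(a,b)}{X_1(c,\Psi^{a,c}(b))}$, which is exactly the incompressibility relation (conservation of horizontal flux) and lies in $[\delta/\|X\|_\infty,\|X\|_\infty/\delta]$. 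Hence $\Psi^{a,c}$ is a monotone bi-Lipschitz homeomorphism of the relevant vertical segments, in particular continuous, with spatial derivative a controlled ratio of the continuous function $X_1$.

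It then remains to pass from the horizontal transport maps back to the time-$t$ flow. For $z=(x_1,x_2)\in\Omega$ the time needed to reach the line $\{x_1=c\}$ along the trajectory through $z$ is $\mathtt T(c;z)=\int_{x_1}^c \frac{d\xi}{X_1(\xi,\Psi^{x_1,\xi}(x_2))}$; since the integrand lies in $[1/\|X\|_\infty,1/\delta]$, the map $c\mapsto\mathtt T(c;z)$ is a strictly increasing bi-Lipschitz bijection, and inverting $\mathtt T(\cdot;z)=t$ yields the first coordinate $c(t,z)$ of $\phi^X_t(z)$, whence $\phi^X_t(z)=\bigl(c(t,z),\Psi^{x_1,c(t,z)}(x_2)\bigr)$. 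Continuity of $\phi^X_t$ on $\Omega$ then follows from continuity of $X$ and of $\Psi$, together with the monotone invertibility of $\mathtt T$. (For $t$ so large that the trajectory has already left $\Omega$ one continues by the global RLF; the transversal structure is used only where it holds, namely inside $\Omega$, and the trajectory traverses $\Omega$ in time at most $2R/\delta$.)

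The main obstacle is the Sobolev, as opposed to merely Lipschitz/continuous, bound, specifically the control of the derivative of $\phi^X_t$ in the transversal direction $e_2$, i.e. across trajectories. Differentiating $c(t,z)$ and $\Psi^{x_1,c(t,z)}(x_2)$ through the implicit equation $\mathtt T(\cdot;z)=t$ produces integrals of $DX$ along trajectories, and one must show these lie in $L^p(\Omega)$. I expect to close this estimate by a difference-quotient argument in the spirit of the Crippa--De Lellis estimates: the divergence between nearby trajectories is governed by $\int|DX|$ along the flow, which is controlled by the maximal function of $DX\in L^p$, while the lower bound $X_1\geq\delta$ both guarantees that only a bounded arc of each trajectory contributes and prevents the Jacobian from degenerating. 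Combining the pointwise Jacobian formula with this quantitative stability should give a bound of the form $\|D\phi^X_t\|_{L^p(\Omega)}\leq C(\delta,\|X\|_\infty,R,t)\,(1+\|DX\|_{L^p})$, completing the proof.
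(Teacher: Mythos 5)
The paper itself does not prove this statement: it is Theorem~1.4 of \cite{Eliodiff}, imported as a black box, with only the remark that the proof rests on the Hamiltonian structure $X=\nabla^\perp H$ available on the simply connected set $\Omega$. Your setup is consistent with that route: after rotating so that $X_1=-\partial_2H\geq\delta$, the monotonicity of $H(c,\cdot)$, the closed form $\Psi^{a,c}(b)=G(c,H(a,b))$ for the cross-section map, the flux identity $\partial_b\Psi^{a,c}(b)=X_1(a,b)/X_1(c,\Psi^{a,c}(b))\in[\delta/\|X\|_\infty,\|X\|_\infty/\delta]$, and the recovery of $\phi^X_t$ by inverting the time integral $\mathtt T(c;z)$ are exactly the level-set reparametrization this construction is built on. The continuity part of the conclusion does follow from what you write, provided you also justify that the pointwise flow you construct coincides a.e.\ with the Regular Lagrangian Flow (DiPerna--Lions uniqueness) and say how trajectories that exit $\Omega$ before time $t$ are handled.

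The genuine gap is the $W^{1,p}$ bound, which is the entire content of the theorem and which you defer to ``a difference-quotient argument in the spirit of the Crippa--De Lellis estimates.'' That tool cannot close the argument: the Crippa--De Lellis estimates hold for \emph{every} $W^{1,p}$ ($p>1$) field with bounded divergence, yet the flow of such a field may fail to belong to any Sobolev or $\BV$ space --- this is precisely the point of the counterexamples recalled in the introduction (\cite{jab16}, and \cite{Eliodiff} itself) and the reason the transversality hypothesis $X\cdot\mathbf e>\delta$ is needed at all. So no estimate that uses only $\|DX\|_{L^p}$, the divergence bound and maximal functions can yield $D\phi^X_t\in L^p$. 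What closes the estimate is the explicit structure you have already set up but do not exploit: implicit differentiation of $\mathtt T(c;z)=t$ gives $\partial_{x_2}c=-\partial_{x_2}\mathtt T/\partial_c\mathtt T$ with $\partial_c\mathtt T=1/X_1\circ(c,\Psi)\in[1/\|X\|_\infty,1/\delta]$, and $\partial_{x_2}\mathtt T$ is an integral of $(\partial_2X_1/X_1^2)\,\partial_b\Psi$ along the graph of the trajectory; its $L^p(\Omega)$ norm is then controlled by Minkowski's integral inequality together with the change of variables $(\xi,x_2)\mapsto(\xi,\Psi^{x_1,\xi}(x_2))$, whose Jacobian is exactly the flux ratio you computed and hence bounded above and below. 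This yields $\|D\phi^X_t\|_{L^p(\Omega)}\leq C(\delta,\|X\|_\infty,R)\,\|DX\|_{L^p}$ directly, with no maximal functions; as written, your proof stops just short of this computation, and the substitute you propose would fail.
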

The proof of the previous result is based on the underlying Hamiltonian structure of the vector field, and therefore the set $\Omega$, which in \cite{Eliodiff} is a ball, must be simply connected to guarantee the existence of a Lipschitz Hamiltonian $H$ such that $X=\nabla^\perp H$. We remark also that, if we consider $\mathbf e$ as in the statement of Theorem \ref{thm:Marconi}, then we have $|X|\geq \delta >0$. A version of Theorem \ref{thm:Marconi} in the $\BV$ setting is also available in \cite{Eliodiff}.

\smallskip

By Theorem \ref{thm:Marconi}, it is clear that if $|X|\geq c>0$ in some $\Omega\in\R^2$, and the set $\Omega$ is invariant for the flows $\phi^X_t,\phi^Y_s$ (meaning that the flows always remain in the set $\Omega$), then $\phi^X_t$ is weakly Lie differentiable in $\Omega$, since it is Sobolev, and therefore the two flows commute in $\Omega$. On the other hand, if $|X|=0$, then the corresponding (Regular Lagrangian) flow is the identity map (at least for a.e. $x\in\R^2$) and the commutativity simply follows by showing that $X\circ \phi^Y_s=0$ for every $s$, i.e. that the vector field $X$ vanishes along the trajectories of $Y$. 

The proof of our main result relies on finding regions of the plane invariant under the flows $\phi^X_t,\phi^Y_s$ where the two different behaviours occur. In this context, the condition $[X,Y]=0$ prescribes very strict geometric properties on the flows of the two vector fields (see in particular Equation \eqref{eq:Lie:bracket:intro} below). 

\bigskip

 Finally, we remark that the study of the Lie Bracket for rough vector fields is of interest in the context of Magneto Hydrodynamics, see, for example, \cite{BrenierMHD}. Furthermore, the investigation of the commutativity of the flows in the non-smooth setting, together with the notion of Lie bracket for rough vector fields, arises naturally in various research fields, see, for instance, \cite{RigoniStepanov},\cite{DelNinBonicatto}. 

\subsubsection*{The Hamiltonian case}
In the first part of our work, Section \ref{Sect:hamiltonians}, we restrict ourselves to the case of Hamiltonian dynamics, i.e. we assume $\dive X=0$, $\dive Y=0$, $n=2$. Our main contribution here is the following result.

\begin{proposition}\label{thm:main:hamiltonian}
    Let $X,Y\in W^{1,p}_{loc}(\R^2,\R^2)\cap C^0(\R^2,\R^2)$ be two bounded vector fields and assume that $\dive X=\dive Y=0$ almost everywhere. Then 
    \begin{equation}\label{eq:equivalence:BV}
        \qquad [X,Y]=0 \iff \phi_t^X \circ \phi_s^Y=\phi_s^Y \circ \phi_t^X , \quad \forall t,s \in \R,\quad\text{for a.e. in }\R^2 .
    \end{equation}
\end{proposition}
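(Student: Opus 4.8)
I would prove the two implications separately, since the reverse one is almost immediate. As $X,Y\in W^{1,p}_{\mathrm{loc}}(\R^2,\R^2)\cap L^\infty$ have vanishing, hence bounded, divergence, the forward direction of the Colombo--Tione equivalence from \cite{ColomboTione} applies verbatim: if the two flows commute, then in particular $[X,Y]=0$. Thus the whole difficulty lies in the implication $[X,Y]=0\Rightarrow$ commutativity, for which the weak Lie differentiability of $\phi^X_t$ must be \emph{produced} rather than assumed.

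First I would fix the geometric picture. Since $\R^2$ is simply connected and $X,Y$ are continuous, bounded and divergence-free, there exist $C^1$ stream functions $H,K$ with $X=\nabla^\perp H$ and $Y=\nabla^\perp K$. A direct distributional computation, using $\dive X=\dive Y=0$, rewrites the bracket as $[X,Y]=\dive(Y\otimes X-X\otimes Y)=\nabla^\perp(X\times Y)$, where $X\times Y:=X_1Y_2-X_2Y_1=\{H,K\}$ is continuous. Hence $[X,Y]=0$ is equivalent to $X\times Y\equiv c_0$ for a single constant $c_0$. This scalar governs everything: differentiating along trajectories yields $H\circ\phi^X_t=H$, $K\circ\phi^X_t=K+c_0t$, and symmetrically $K\circ\phi^Y_s=K$, $H\circ\phi^Y_s=H-c_0s$, so that in the (formal) coordinates $(H,K)$ the two flows become the commuting translations $(H,K)\mapsto(H,K+c_0t)$ and $(H,K)\mapsto(H-c_0s,K)$. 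This is the geometric heart of the statement, and the remaining work is to make this conjugation rigorous at the level of Regular Lagrangian Flows.

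To turn this into an honest argument I would produce the weak Lie differentiability of $\phi^X_t$ and then invoke the reverse direction of Colombo--Tione. On the open set $\{X\neq0\}$ each point $x_0$ admits a small ball $\Omega$ on which, by continuity, $X\cdot\mathbf e>\delta$ with $\mathbf e=X(x_0)/|X(x_0)|$; after a standard compactly supported modification of $X$ away from the region of interest (legitimate on a fixed finite time interval, since $X$ is bounded and trajectories stay in a compact set), Theorem \ref{thm:Marconi} yields a representative $\phi^X_t\in W^{1,p}(\Omega)$. Covering $\{X\neq0\}$ by such balls and using locality of Sobolev regularity gives $\phi^X_t\in W^{1,p}_{\mathrm{loc}}(\{X\neq0\})$, hence weak Lie differentiability there. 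When $c_0\neq0$ one has $\{X\neq0\}=\R^2$, so $\phi^X_t\in W^{1,p}_{\mathrm{loc}}(\R^2)$ and commutativity follows at once from \cite{ColomboTione}.

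The delicate case is $c_0=0$, where $X\parallel Y$ everywhere and the zero set $Z=\{X=0\}$ may be large. Here the level sets of $H$ are invariant under \emph{both} flows (now $H\circ\phi^Y_s=H$ as well), which realizes the invariant regions of the heuristic: on $\{X\neq0\}$ the flows move along the common foliation and commute by the regularity argument above, while on $Z$ one expects $\phi^X_t=\mathrm{id}$, since a Regular Lagrangian Flow cannot move almost every point at which the field vanishes. Commutativity on $Z$ then reduces to the invariance $\phi^Y_s(Z)\subseteq Z$ up to null sets, that is, to $X\circ\phi^Y_s=0$ a.e. on $Z$. I expect this gluing across the zero (critical) set to be the main obstacle: Theorem \ref{thm:Marconi} gives no information near $Z$, and the collinearity $X\times Y=0$ imposes no constraint where $X$ vanishes, so the invariance of $Z$ under $\phi^Y_s$ must be extracted from the transport structure encoded in $[X,Y]=0$ together with the measure preservation coming from $\dive Y=0$. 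Assembling the regular part, the identity part, and this invariance into the single a.e. identity $\phi^X_t\circ\phi^Y_s=\phi^Y_s\circ\phi^X_t$ is the step I anticipate will require the most care.
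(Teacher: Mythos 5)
Your setup matches the paper's: the reverse implication is quoted from \cite{ColomboTione}, the identity of Lemma \ref{lem:eq:for:bracket} reduces the hypothesis $[X,Y]=0$ to $X\cdot Y^\perp\equiv c_0$, and the case $c_0\neq 0$ is handled exactly as in the paper via the uniform lower bound $|X|\geq |c_0|/\|Y\|_\infty$ and Theorem \ref{thm:elio}. But the case $c_0=0$ --- which you yourself identify as the crux --- is left unproven at precisely the two points where the paper's actual work lies. First, the invariance of $\lbrace X=0\rbrace$ under $\phi^Y_s$ is not an obstacle to be deferred: the paper supplies it either by the Gronwall estimate of Remark \ref{rmk:gronwall}, namely
\begin{equation*}
\frac{d}{ds}|X\circ\phi^Y_s|^2=2\langle X\circ\phi^Y_s,\,DX(\phi^Y_s)\,Y\circ\phi^Y_s\rangle=2\langle X\circ\phi^Y_s,\,DY(\phi^Y_s)\,X\circ\phi^Y_s\rangle\leq 2\,|DY(\phi^Y_s)|\,|X\circ\phi^Y_s|^2,
\end{equation*}
where the middle equality is exactly $[X,Y]=0$, or by the purely geometric level-set argument of Proposition \ref{prop:zero}. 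Without one of these, the ``identity part'' of your gluing is missing, and it is the heart of why the weak Lie differentiability hypothesis of \cite{ColomboTione} can be dropped.

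Second, on $\lbrace X\neq0\rbrace\cap\lbrace Y\neq0\rbrace$ your plan to cover by balls, apply Theorem \ref{thm:Marconi} locally, and then invoke \cite{ColomboTione} does not close: their theorem requires weak Lie differentiability on an invariant set with locally bounded derivative, and near the boundary of $\lbrace X=0\rbrace$ the constant $\delta$ in Theorem \ref{thm:Marconi} degenerates, so $W^{1,p}_{\mathrm{loc}}(\lbrace X\neq0\rbrace)$ regularity alone does not feed back into a theorem stated on $\R^2$; you would also need to show $\lbrace X\neq 0\rbrace$ is invariant under both flows. The paper avoids this entirely: restricting to the full-measure sets $E_X,E_Y$ of Corollary \ref{cor:pos:vect:level} (unions of level sets on which the fields are uniformly bounded below), both trajectories travel along the same simple closed curve $C$, and commutativity is proved by hand via the time change $\phi^X_t=\phi^Y_{\tau(t,\cdot)}$ with $\tau(t,y)=\alpha(y)t$, where $\alpha=X\cdot Y/|Y|^2$ is shown to be constant along the orbit because $\dive(\alpha Y)=\dive X=0$ forces $\nabla\alpha\cdot Y=0$ (Proposition \ref{prop:commu-nonzeroset}). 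These two arguments are the substance of the forward implication, and your proposal contains neither.
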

The main idea behind the proof of Proposition \ref{thm:main:hamiltonian} is to exploit the well-known fact that, if $X \in L^\infty(\R^2,\R^2)$ with $\dive X=0$, then there exists a Lipschitz Hamiltonian $H:\R^2 \rightarrow \R$ such that
\begin{equation*}
    X=\nabla^\perp H=(-\partial_2 H,\partial_1 H).
\end{equation*}
Formally, the flow preserves the Hamiltonian, i.e. the trajectories of the flows are contained in the level sets of $H$. This property was exploited in \cite{ABC} to split the uniqueness problem for the continuity equation into a family of one-dimensional
problems on the level sets of the Hamiltonian $H$. A technique introduced in \cite{ABC} allowed the authors to characterize the Hamiltonians (and corresponding vector fields) for which one has uniqueness to the continuity equation in the class of bounded solutions. 

It is worth mentioning that the approach described above was also used in \cite{BianchiniGusev} to show the uniqueness of the Regular Lagrangian Flow for steady
nearly incompressible vector fields with bounded variation. The result obtained in \cite{BianchiniGusev} was then later extended in \cite{BianchiniBonicattoGusev} to nearly incompressible fields with bounded variation. Once again, the proof in \cite{BianchiniBonicattoGusev} relies on splitting the transport equation related to the vector field over a suitable partition of the plane in the spirit of \cite{ABC}.

The key observation in our analysis is the following: Given two smooth Hamiltonian vector fields $X=\nabla^\perp H,  Y=\nabla^\perp K$, we have
\begin{equation*}
    [X,Y]=0 \iff \lbrace H,K\rbrace=\text{const},
\end{equation*}
where $\lbrace \cdot,\cdot\rbrace$ denotes the Poisson Bracket.  In the two-dimensional case, the vanishing Lie bracket corresponds in particular to the following property:
\begin{equation}\label{eq:property:scalar:product}
   X\cdot Y^\perp=\text{const}.
\end{equation}
Indeed, the following equation holds \cite{Arnold:khesin}:

    \begin{equation}\label{eq:Lie:bracket:intro}
        [X,Y]= \dive YX-\dive X Y -\nabla^\perp (X\cdot Y^\perp).
        \end{equation}
In Section \ref{Sect:hamiltonians}, we show that the same equivalence between the vanishing Lie bracket and property \eqref{eq:property:scalar:product} still holds if we interpret the Lie bracket in \eqref{eq:Lie:bracket:intro} in the distributional sense. Hence, we are left to prove that $X\cdot Y^\perp=\text{const} \Rightarrow \phi_t^X \circ \phi_s^Y=\phi_s^Y \circ \phi_t^X$, for every $ t,s \in \R$, for a.e. in $x \in \R^2$ and therefore we can study separately the two different cases $ X\cdot Y^\perp\not=0$, and $ X\cdot Y^\perp=0$. In the first case, the commutativity of the flows is an almost direct consequence of the results proved in \cite{Eliodiff} (see also \cite[Corollary 4.2]{ColomboTione}) for which we need the continuity assumption. On the other hand, the proof in the second case $X\cdot Y^\perp=0$ is more delicate and relies on the geometry of the two vector fields. To be more precise, we make the observation that the condition $X\cdot Y^\perp=0$ implies that the level sets of the two Hamiltonians $H,K$ coincide, so that we can reduce the study of the commutativity of flows to individual level set, in the spirit of what was done in \cite{ABC}. In particular, the strict geometry of the two vector fields allows us to extend the result in \cite{ColomboTione} by means of purely geometric considerations, and recover the classical equivalence.


 \subsubsection*{The Nearly incompressible case} 
In the second part of our work (Section \ref{Sect:nearly:incompr}), we assume the more general hypothesis $\dive X,\dive Y\in L^\infty(\R^2,\R)$. A key observation in our analysis is that, by classical results, if we assume that $X$, $Y$ have bounded divergence, then they are nearly incompressible in the following sense:
\begin{definition}[Nearly incompressible]
    \label{def:nearly:incomp} 
    A bounded vector field $X\in L^1_{\text{loc}}(\R^n,\R^n)\cap W^{1,p}_{\text{loc}}(\R^n,\R^n)$ is called \emph{nearly incompressible} if there exists a positive constant $C>0$ and a density $\rho:I\times\Omega\rightarrow\R$ such that $$\partial_t\rho+\dive(\rho X)=0\quad\text{in }\mathcal D'(I\times\Omega),$$ with $\rho\in[C^{-1},C]$ $\L^n$-a.e. $(t,x)\in I\times\Omega$. 
    
\end{definition}
It is clear that divergence-free vector fields are nearly incompressible. 
We recall that nearly incompressible vector fields were introduced in the study of the hyperbolic system of conservation laws named after Keyfitz and Kranzer (for a systematic treatment of this topic, we refer the reader to \cite{DeLellisNotes}). 

\bigskip

Also in this more general setting, Equation \eqref{eq:Lie:bracket:intro} provides strong geometric insights on the flows of the two vector fields. Indeed, under the hypothesis $[X,Y]=0$, the vector field
\begin{equation*}
    W\doteq \dive Y X-\dive X Y \in L^\infty
\end{equation*}
is a Hamiltonian vector field, with Lipschitz Hamiltonian given by
\begin{equation*}
    H_{W}=X\cdot Y^\perp,
\end{equation*}
where $Y^\perp$ denotes $(-Y_2,Y_1)$, namely the counterclockwise rotation of the vector field $Y$. As in the previous case of Hamiltonian vector fields, there is a clear distinction between the two behaviours occurring in the regions $\lbrace X\cdot Y^\perp=0\rbrace$ and $\lbrace X\cdot Y^\perp\not=0\rbrace$. To study the  behaviour of the flows of $X,Y$, we prove two key results. The first (Proposition \ref{thm:eq:riccardo} below) states that the flows of the two vector fields $X,Y$ remain confined in the regions where $H_W$ is bounded, namely the closed sets $\overline{\Omega}_\kappa=H_W^{-1}\left[{C}^{-1}\kappa, C\kappa\right]$. Indeed,
\begin{proposition}\label{thm:eq:riccardo}
    Let $X,Y\in W^{1,p}(\R^2,\R^2)\cap L^\infty(\R^2,\R^2)$ with bounded divergence. Moreover, assume that $[X,Y]=0$. Then the following equation holds
    \begin{equation}\label{eq:variation:det}
        X\cdot Y^\perp\circ\phi^X_t= \text{det}D\phi^X_t X\cdot Y^\perp\quad\text{a.e.},
    \end{equation}
    (and the same for the flow $\phi^Y_s$). In particular, if $X\cdot Y^\perp>0$, then there holds 
    \begin{equation*}
        \frac{1}{C} X\cdot Y^\perp\leq  X\cdot Y^\perp\circ\phi^X_t\leq C  X\cdot Y^\perp\qquad \text{a.e.,}
    \end{equation*}
for some positive constant $C>0$.
\end{proposition}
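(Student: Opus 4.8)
The plan is to establish the transport identity \eqref{eq:variation:det} for the scalar function $H_W = X\cdot Y^\perp$ along the flow of $X$, and then deduce the pointwise bounds from the near-incompressibility of $X$.

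First I would recall the key structural fact from \eqref{eq:Lie:bracket:intro}: under the hypothesis $[X,Y]=0$, we have the distributional identity
\begin{equation*}
    \nabla^\perp(X\cdot Y^\perp) = \dive Y\, X - \dive X\, Y = W,
\end{equation*}
so that $H_W = X\cdot Y^\perp$ is a Lipschitz Hamiltonian for the bounded vector field $W$. The natural strategy is to compute the evolution of $H_W$ along the trajectories of $\phi^X_t$. Formally, differentiating $H_W\circ\phi^X_t$ in time one obtains $\partial_t(H_W\circ\phi^X_t) = (\nabla H_W\cdot X)\circ\phi^X_t$, and since $\nabla H_W = -W^\perp = -(\dive Y\, X - \dive X\, Y)^\perp$, a short computation using $X\cdot X^\perp=0$ gives $\nabla H_W\cdot X = \dive X\,(X\cdot Y^\perp) = \dive X\, H_W$. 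Hence $H_W$ should satisfy the linear transport equation $\partial_t(H_W\circ\phi^X_t)=(\dive X\, H_W)\circ\phi^X_t$ along the flow, whose solution is exactly $H_W\circ\phi^X_t = (\det D\phi^X_t)\, H_W$ once one recalls that $\det D\phi^X_t$ (equivalently the Jacobian/density along the flow) solves the same ODE $\partial_t \det D\phi^X_t = (\dive X\circ\phi^X_t)\det D\phi^X_t$. This is precisely \eqref{eq:variation:det}.

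The main obstacle is that $X$ is only Sobolev and continuous, so $\phi^X_t$ is merely a Regular Lagrangian Flow and none of these ODE computations are classically justified; in particular $D\phi^X_t$ need not exist as a Sobolev derivative, as the introduction stresses. I would therefore carry out the argument at the level of the continuity equation rather than pointwise. The clean way is to recall that for a nearly incompressible field the pushforward of Lebesgue measure under the RLF has a density $\rho_t$ with $\partial_t\rho_t + \dive(\rho_t X)=0$ and $\rho_t\in[C^{-1},C]$, and that this density plays the role of $\det D\phi^X_t$ in the change-of-variables formula $\int g\circ\phi^X_t\,\rho_t\,dx = \int g\,dx$. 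Testing the distributional identity $\nabla^\perp H_W = \dive Y\,X-\dive X\,Y$ against smooth functions transported by the flow, and using the commutation/renormalization properties of the RLF (DiPerna–Lions theory, valid since $X\in W^{1,p}$ with bounded divergence), one verifies \eqref{eq:variation:det} rigorously as an $L^\infty$ identity a.e. The continuity of $X,Y$ guarantees $H_W$ is continuous, which lets us interpret the level-set geometry pointwise.

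Finally, the pointwise bounds follow immediately: substituting the near-incompressibility constraint $C^{-1}\le\rho_t\le C$ — which controls exactly the Jacobian factor $\det D\phi^X_t$ appearing in \eqref{eq:variation:det} — into the identity $H_W\circ\phi^X_t = (\det D\phi^X_t)\,H_W$ yields $\tfrac{1}{C}\,X\cdot Y^\perp \le X\cdot Y^\perp\circ\phi^X_t \le C\,X\cdot Y^\perp$ whenever $X\cdot Y^\perp>0$. The identical argument with the roles of $X$ and $Y$ exchanged (noting $X\cdot Y^\perp = -(Y\cdot X^\perp)$ up to sign, so the same $H_W$ governs both flows) gives the analogous statement for $\phi^Y_s$. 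The key takeaway, which feeds into Proposition \ref{thm:eq:riccardo}'s role in the larger proof, is that the sets $\overline{\Omega}_\kappa = H_W^{-1}[C^{-1}\kappa, C\kappa]$ are invariant under both flows.
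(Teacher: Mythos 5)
Your proposal is correct and follows essentially the same route as the paper: differentiate $X\cdot Y^\perp\circ\phi^X_t$ in time, use $\nabla^\perp(X\cdot Y^\perp)=\dive Y\,X-\dive X\,Y$ (from \eqref{eq:Lie:bracket} with $[X,Y]=0$) together with $X\cdot X^\perp=0$ to get $\partial_t(H_W\circ\phi^X_t)=(\dive X\, H_W)\circ\phi^X_t$, and conclude by observing that the density $\xi$ of $(\phi^X_t)_\sharp\L^2$ satisfies the same Liouville ODE \eqref{eq:Liouville}, so the two sides of \eqref{eq:variation:det} agree; the bounds then follow from $\xi\in[C^{-1},C]$. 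The paper's proof is exactly this computation (phrased with $\xi$ in place of $\det D\phi^X_t$), so no substantive difference.
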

The proof of the previous proposition is a consequence of Equation \eqref{eq:Lie:bracket} below and of the boundedness of $\text{det}D\phi^X_t$, namely

\begin{equation}\label{eq:limitedness}
    \frac{1}{C}\leq \text{det}D\phi^X_t\leq C, \quad \text{a.e.}.
\end{equation}
The second result (Proposition \ref{thm:steady}) states that the two vector fields $X,Y$ are steady nearly incompressible in the sets $\Omega_\kappa$. We recall that
\begin{definition}[Steady nearly incompressible]\label{def:steady}
    A bounded vector field $X\in L^1_{\text{loc}}(\R^n;\R^n)$ is called \emph{steady nearly incompressible} in $\Omega$ if there exists a positive constant $C>0$ and a time-independent density $\rho:\Omega\rightarrow\R$ such that $$\dive(\rho X)=0\quad\text{in }\mathcal D'(\Omega),$$ with $\rho\in[C^{-1},C]$ $\L^n$-a.e. $x\in \Omega$. 
\end{definition}
Then the following Proposition holds
\begin{proposition}\label{thm:steady}
    Let $X,Y\in L^\infty(\R^2,\R^2)\cap W^{1,p}(\R^2,\R^2)$ with bounded divergence such that $[X,Y]=0$. Then for every $\kappa\in \R\setminus \lbrace{0}\rbrace$, the vector fields $X\llcorner_{\Omega_\kappa},Y\llcorner_{\Omega_\kappa}$ are steady nearly incompressible. Moreover, they have the same Lipschitz and bounded density $\rho_\kappa:\Omega_\kappa\rightarrow\R$ given by the function
    \begin{equation*}
        \rho_\kappa=\frac{1}{H_W},
    \end{equation*}
    that is 
    \begin{equation*}
        \dive(\rho_\kappa (X-Y))=0, \quad\text{in }\mathcal D'(\Omega_\kappa).
    \end{equation*}
\end{proposition}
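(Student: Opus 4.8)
\emph{Proof plan.} The plan is to read the density off directly from the structure imposed by the vanishing Lie bracket, and then to verify the two stationary transport equations by a short distributional computation. Combining $[X,Y]=0$ with \eqref{eq:Lie:bracket:intro} gives $W=\dive(Y)X-\dive(X)Y=\nabla^\perp H_W$ with $H_W=X\cdot Y^\perp$ Lipschitz; equivalently, the distributional gradient satisfies $\nabla H_W=-W^\perp$ as an identity in $L^\infty(\R^2,\R^2)$. Fixing $\kappa\in\R\setminus\{0\}$, on $\Omega_\kappa$ the Lipschitz function $H_W$ takes values in an interval bounded away from $0$, so $\rho_\kappa=1/H_W$ is bounded and Lipschitz there (it is the composition of $H_W$ with $t\mapsto 1/t$, the latter being $C^1$ on the range of $H_W\llcorner_{\Omega_\kappa}$). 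Since $\dive(\rho_\kappa(X-Y))=\dive(\rho_\kappa X)-\dive(\rho_\kappa Y)$ by linearity, it suffices to prove separately that $X\llcorner_{\Omega_\kappa}$ and $Y\llcorner_{\Omega_\kappa}$ are steady nearly incompressible with this same density, i.e. that $\dive(\rho_\kappa X)=0$ and $\dive(\rho_\kappa Y)=0$ in $\D'(\Omega_\kappa)$.

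The first step I would carry out is to record two pointwise algebraic identities. Expanding $\nabla H_W=-W^\perp=\dive(X)Y^\perp-\dive(Y)X^\perp$ and taking the scalar product with $X$, the orthogonality $X\cdot X^\perp=0$ leaves $X\cdot\nabla H_W=\dive(X)\,(X\cdot Y^\perp)=\dive(X)\,H_W$ almost everywhere. Taking instead the scalar product with $Y$, and using $Y\cdot Y^\perp=0$ together with $Y\cdot X^\perp=-X\cdot Y^\perp=-H_W$, yields $Y\cdot\nabla H_W=\dive(Y)\,H_W$ almost everywhere. These are equalities between $L^\infty$ functions and require nothing beyond the relation $\nabla H_W=-W^\perp$ already established.

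The final step is to combine these identities through the Leibniz and chain rules. Formally, $\dive(\rho_\kappa X)=\rho_\kappa\dive(X)+X\cdot\nabla\rho_\kappa$ with $\nabla\rho_\kappa=-H_W^{-2}\nabla H_W$, so that $X\cdot\nabla\rho_\kappa=-H_W^{-2}(X\cdot\nabla H_W)=-H_W^{-2}\dive(X)H_W=-\rho_\kappa\dive(X)$ and the two terms cancel; the same computation with the second identity gives $\dive(\rho_\kappa Y)=0$, and linearity then closes the proof. The part that will need genuine care, rather than formal manipulation, is the validity of these calculus rules at the available regularity: $\rho_\kappa$ is only Lipschitz and $X,Y$ are only $W^{1,p}\cap L^\infty$. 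I would justify the distributional product rule for the product of the Lipschitz scalar $\rho_\kappa$ and the Sobolev field $X$, as well as the a.e. chain rule for $1/H_W$, by mollifying $X$ (resp. $Y$) on compact subsets of $\Omega_\kappa$, applying the smooth rules, and passing to the limit, the uniform bounds $\rho_\kappa,\nabla\rho_\kappa\in L^\infty(\Omega_\kappa)$ ensuring the passage. This regularity bookkeeping, localized inside $\Omega_\kappa$ where $H_W$ stays away from $0$, is the main (and essentially only) obstacle.
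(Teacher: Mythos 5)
Your proposal is correct and is essentially the paper's own argument: both proofs extract the pointwise identities $X\cdot\nabla H_W=\dive(X)\,H_W$ and $Y\cdot\nabla H_W=\dive(Y)\,H_W$ from $\nabla^\perp H_W=\dive(Y)X-\dive(X)Y$ (the paper phrases them as $\nabla^\perp H_W\cdot X^\perp$ and $\nabla^\perp H_W\cdot Y^\perp$, which is the same thing), and then conclude by the Leibniz and chain rules for $\dive(H_W^{-1}X)$ and $\dive(H_W^{-1}Y)$ on $\Omega_\kappa$ where $H_W$ is bounded away from zero. Your closing remark on justifying these calculus rules at Lipschitz--Sobolev regularity by mollification is a reasonable addition that the paper leaves implicit.
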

This proposition allows us to tackle the problem of commutativity of flows separately in the two regions $\Omega=\cup _{\kappa \not=0}\Omega_\kappa$ and in $\lbrace X\cdot Y^\perp=0\rbrace,$ similarly to what was previously done in the case of Hamiltonian vector fields. In this case also, we expect that in the set $\Omega$ the flows of the two vector fields are differentiable in the spirit of \cite{Eliodiff}, while in the complementary set $\lbrace X\cdot Y^\perp=0\rbrace$ we exploit the geometry of the trajectories. We are now in a position to state the main result of this paper, namely
\begin{theorem}\label{thm:main:n:i-intro}
    Let $X,Y\in W^{1,p}(\R^2,\R^2)\cap C^0(\R^2,\R^2)$ be two bounded vector fields with $\dive X,\dive Y \in L^\infty(\R^2,\R^2).$ Then we have
\begin{equation*} \qquad [X,Y]=0 \iff \phi_t^X \circ \phi_s^Y=\phi_s^Y \circ \phi_t^X , \quad \forall t,s \in \R,\quad\text{a.e. in }\R^2.
    \end{equation*}
\end{theorem}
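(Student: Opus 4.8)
The plan is to prove the two implications of Theorem~\ref{thm:main:n:i-intro} separately, following the strategy outlined in the introduction and assembling the two key structural results, Proposition~\ref{thm:eq:riccardo} and Proposition~\ref{thm:steady}. The implication $\Leftarrow$ (commutativity $\Rightarrow$ $[X,Y]=0$) should be the easier direction: assuming the flows commute for all $s,t$ and a.e.\ $z$, I would differentiate the identity $\phi_t^X\circ\phi_s^Y=\phi_s^Y\circ\phi_t^X$ in the distributional/weak sense and use the formula \eqref{eq:Lie:bracket:intro} together with the nearly incompressibility of both fields to recover $[X,Y]=0$ as a distribution. The substantive content lies in the forward implication $\Rightarrow$, which is where the geometric decomposition does the work.

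\smallskip

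For the implication $[X,Y]=0 \Rightarrow$ commutativity, I would first invoke \eqref{eq:Lie:bracket:intro} to conclude that $W\doteq \dive Y\,X-\dive X\,Y$ is a Hamiltonian vector field with Lipschitz Hamiltonian $H_W = X\cdot Y^\perp$. The next step is to split the plane according to the sign of $H_W$, namely into the open region $\Omega=\bigcup_{\kappa\neq 0}\Omega_\kappa$ where $H_W\neq 0$ and the complementary closed set $\{X\cdot Y^\perp = 0\}$. By Proposition~\ref{thm:eq:riccardo}, the level-set decomposition $\overline{\Omega}_\kappa=H_W^{-1}[C^{-1}\kappa,C\kappa]$ is invariant under both flows, so it is legitimate to argue separately on each piece. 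On $\Omega$, Proposition~\ref{thm:steady} provides a common Lipschitz and bounded density $\rho_\kappa=1/H_W$ making $X\restriction_{\Omega_\kappa}$ and $Y\restriction_{\Omega_\kappa}$ steady nearly incompressible; combined with the lower bound $|H_W|>0$ there, this forces $|X|,|Y|$ to stay bounded below on the invariant pieces, so by Theorem~\ref{thm:Marconi} (applied after straightening via the Hamiltonian structure) the flows admit Sobolev representatives. Sobolev regularity yields weak Lie differentiability, and then the commutativity on $\Omega$ follows directly from the result of \cite{ColomboTione}, whose extra hypothesis is now automatically satisfied.

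\smallskip

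On the set $\{X\cdot Y^\perp=0\}$ the argument must be genuinely geometric. Here $X$ and $Y$ are everywhere parallel, so the level sets of the two Hamiltonians coincide and the two fields are colinear along common trajectories. The idea is to reduce commutativity to showing that $X$ vanishes along the $Y$-trajectories (and symmetrically), i.e.\ that $X\circ\phi_s^Y=0$ whenever one enters the region where $|X|=0$, in which case the corresponding flow is the identity for a.e.\ initial datum and the composition trivially commutes; where both fields are nonzero but parallel, the colinearity lets one reparametrize one flow as a time-change of the other, again yielding commutativity. The main obstacle I expect is precisely the careful handling of this degenerate set: the fields may vanish, the Hamiltonian structure degenerates, and one cannot appeal to Sobolev regularity of the flow. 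Controlling the behaviour on $\{X\cdot Y^\perp=0\}$ — in particular justifying that the RLF is the identity where the field vanishes and that colinearity propagates consistently along trajectories, using the strict geometric constraints imposed by $[X,Y]=0$ through \eqref{eq:Lie:bracket:intro} — is the technical heart of the proof. Once both regions are handled, one glues the a.e.\ commutativity across the invariant decomposition to obtain the statement for all $s,t$ and a.e.\ $z\in\R^2$.
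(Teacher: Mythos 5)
Your proposal is correct and follows essentially the same route as the paper: the reverse implication via the distributional argument adapted from \cite{ColomboTione}, and the forward implication via the decomposition into the flow-invariant set $\Omega=\bigcup_{\kappa\neq 0}\Omega_\kappa$ (where Proposition \ref{thm:steady} and the Hamiltonian structure of $\rho_\kappa X$ yield Sobolev regularity of the flows through Theorem \ref{thm:Marconi}, hence commutativity by \cite{ColomboTione}) and the degenerate set $\{X\cdot Y^\perp=0\}$ (where one uses the Gronwall argument on $\{X=0\}$ and the colinearity $Y=\alpha X$ with $\alpha$ constant along trajectories to realize one flow as a time-change of the other). The only step you leave implicit that the paper makes explicit is that on $\Omega^c$ the strong locality of the Sobolev gradient gives $\dive Y\,X=\dive X\,Y$ a.e., which is what forces $\nabla\alpha\cdot X=0$ and hence the constancy of the reparametrization factor.
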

It is clear that Proposition \ref{thm:main:hamiltonian} follows immediately once we have established Theorem \ref{thm:main:n:i-intro}. However, we prefer to analyze the Hamiltonian case first in order to present the main ideas of the proof more clearly. Moreover, the Hamiltonian structure allows us to propose in Section \ref{Sect:hamiltonians} an alternative proof of the commutativity of the flows which relies on the analysis of level sets of Hamiltonians.

\medskip

{To summarize the above, the general idea is that, in the regions where $X\cdot Y^\perp\not=0$, one of the two vector field plays the role of the direction $\mathbf{e}$ in Theorem \ref{thm:elio} and therefore we have the differentiability property of the two flows. At the points where the direction $\mathbf{e}$ cannot be found, we exploit the fact that the trajectories of the two vector fields are the same. In particular, we remark that the non-symmetric assumption `$\phi^X_t$ is weakly Lie differentiable w.r.t. $Y$' (without asking anything on the flow of the vector field $Y$) is symmetric, since in the same region $X\cdot Y^\perp\not=0$ the flow of the vector field $Y$ is weakly Lie differentiable w.r.t. $X$.}
\medskip

We conclude this paragraph by emphasizing that, since $[X,Y]=0$ imposes strict geometric conditions on the flows of the two vector fields, we expect that also in higher dimension the geometry of trajectories could be analyzed to improve the result in \cite{ColomboTione}, since analogous of Equation \eqref{eq:Lie:bracket:intro} are available. On the other hand, it is unclear whether in higher dimension we can still expect Sobolev regularity results for the flows under study. Equation \eqref{eq:Lie:bracket:intro} turned out to be a surprisingly rich source of information about the geometric properties of the two vector fields, and we expect that it could be further investigated for our understanding of the underlying geometric structure given by the vanishing Lie bracket. 

\subsubsection*{Extension to $\BV$ setting}
In the final part of this work, Section \ref{Sect:BV}, we briefly discuss how to generalize the previous results to $\BV$ vector fields, since the results of our earlier discussions hold also in this case. In our opinion, the extension to the $\BV$ setting is not trivial. First and foremost, it requires the following distributional definition of Lie Bracket.

\begin{definition}[Distributional Lie bracket]\label{def:Lie:bracket}
    Let $X$, $Y \in \BV_{\text{loc}}(\R^n,\R^n)\cap L^\infty(\R^n,\R^n)$ be vector fields with bounded divergence. Then we define the (distributional) Lie bracket $[X,Y]$ by setting, for every $\varphi\in C^1_c(\R^n,\R^n)$,
\begin{equation}\label{eq:def:lie:brack}
    \int_{\R^n} ([X,Y],\varphi) dz \doteq\int_{\R^n}\left((X,D\varphi Y)+\dive Y(X,\varphi)-(Y,D\varphi X)-\dive X(Y,\varphi) \right)dz.
\end{equation}
\end{definition}
We remark that the Lie bracket in \eqref{eq:def:lie:brack} is well-defined since we assume that the vector fields $X$ and $Y$ have bounded divergence. Moreover, we emphasize that our definition of Lie bracket is consistent with the classical one if $X,Y$ are smooth vector fields and reduces to the one in \cite{ColomboTione} if we identify $X$ and $Y$ with their precise representative, by Vol'pert chain rule formula (see Sections \ref{S:preliminaries} and \ref{Sect:BV}). Owing to Definition \ref{def:Lie:bracket}, we prove the following proposition, which extends to the $\BV$ setting one of the implications in \eqref{eq:commutativity-classical}.

\begin{proposition}\label{thm:comm-BV}
 Let $X$, $Y \in \BV_{\text{loc}}(\R^n,\R^n)\cap L^\infty(\R^n,\R^n)$ be vector fields with bounded divergence and let $\phi_t^X$, $\phi_s^Y$ be their Regular Lagrangian Flows. Then the following implication holds
 \begin{equation}
   \phi_t^X \circ \phi_s^Y=\phi_s^Y \circ \phi_t^X , \quad \forall t,s \in \R,\quad\text{a.e. in }\R^n \quad \Rightarrow \quad [X,Y]=0.
 \end{equation}
\end{proposition}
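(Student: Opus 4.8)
The plan is to verify the distributional identity in \eqref{eq:def:lie:brack}, namely that $\int_{\R^n}([X,Y],\varphi)\,dz=0$ for every $\varphi\in C^1_c(\R^n,\R^n)$. Since the right-hand side of \eqref{eq:def:lie:brack} is linear in $\varphi$, it suffices to treat test fields of the special form $\varphi=\psi\,\nabla g$ with $\psi\in C^1_c(\R^n)$ and $g\in C^2_c(\R^n)$: fixing for each $i$ a cutoff $\chi\in C^2_c$ equal to $1$ on $\supp\varphi_i$, one has $\varphi_i e_i=\varphi_i\nabla(x_i\chi)$, so every $\varphi$ is a finite sum of such fields. For such a $\varphi$ I would introduce the scalar quantity $F(t,s):=\int_{\R^n}\psi(x)\,g\big(\phi^X_t\circ\phi^Y_s(x)\big)\,dx$. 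The commutativity hypothesis says precisely that $F(t,s)=\int_{\R^n}\psi(x)\,g\big(\phi^Y_s\circ\phi^X_t(x)\big)\,dx$ as well, so the mixed second derivative $\partial_t\partial_sF(0,0)$ can be computed in two ways, peeling off the flow of $X$ first or the flow of $Y$ first, and the two outcomes must agree.

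Carrying out the first computation, differentiating in the outer flow of $X$ gives $\partial_t F(t,s)=\int\psi\,(\nabla g\cdot X)\big(\phi^X_t\phi^Y_s(\cdot)\big)\,dx$, using the integral form $\phi^X_t(y)=y+\int_0^t X(\phi^X_\tau y)\,d\tau$ of the trajectories together with dominated convergence (Lebesgue points of $X$ along a.e.\ trajectory, $g\in C^2_c$, $X\in L^\infty$). Setting $t=0$ and differentiating in $s$, and using that $(\phi^Y_s)_\#\L^n=\rho_s\L^n$ with $\rho_s$ solving the continuity equation, $\rho_0=1$, $\partial_s\rho_s|_{s=0}=-\dive Y$, and $\rho_s\in[C^{-1},C]$ by the bounded-compressibility bounds guaranteed by $\dive Y\in L^\infty$ (cf.\ \eqref{eq:limitedness}), I would obtain $\partial_s\partial_t F(0,0)=-\int\dive(\psi Y)(\nabla g\cdot X)\,dx$. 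Peeling the flow of $Y$ first gives symmetrically $\partial_t\partial_s F(0,0)=-\int\dive(\psi X)(\nabla g\cdot Y)\,dx$. Equating the two yields $\int\big[\dive(\psi Y)(\nabla g\cdot X)-\dive(\psi X)(\nabla g\cdot Y)\big]\,dx=0$. Expanding \eqref{eq:def:lie:brack} for $\varphi=\psi\nabla g$, the second-order contributions coming from $D^2g$ cancel by symmetry of $D^2g$, and the left-hand side is identified exactly with $\int([X,Y],\psi\nabla g)\,dx$. Hence this pairing vanishes, and by the reduction above $[X,Y]=0$.

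The delicate point is the $s$-differentiation of $s\mapsto\int\psi\,(\nabla g\cdot X)\big(\phi^Y_s(\cdot)\big)\,dx$: the transported quantity $\nabla g\cdot X$ is only $\BV\cap L^\infty$, not smooth, so neither the pointwise chain rule nor the weak formulation of the continuity equation (which requires $C^1$ test functions) applies directly. Here I would invoke Ambrosio's well-posedness and renormalization theory for the transport and continuity equations with $\BV$ fields of bounded divergence \cite{Ambrosio:BV}, which ensures that $H\circ\phi^Y_s$ (with $H=\nabla g\cdot X$) is the unique bounded renormalized solution and is differentiable in $s$ in the appropriate weak sense, the derivative being governed by $Y\cdot\nabla H$ interpreted through the Vol'pert chain rule on the precise representatives. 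Writing $\nabla H=D^2g\,X+(DX)^{T}\nabla g$, the only genuinely singular object produced is the product of $Y$ with the measure $DX$; making this product rigorous and matching it to the corresponding term in \eqref{eq:def:lie:brack} is precisely the consistency with \cite{ColomboTione} announced after Definition \ref{def:Lie:bracket}, and I expect this identification to be the main obstacle.

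A secondary technical point is the equality of the two iterated derivatives of $F$. Rather than establishing joint $C^2$ regularity and invoking Clairaut's theorem, I would work throughout with the symmetric second difference $\big[F(t,s)-F(t,0)-F(0,s)+F(0,0)\big]/(ts)$ and show it converges as $t,s\to0$ to the common value computed above; this bypasses the smoothness of $F$ and makes the two-sided computation manifestly symmetric, with the uniform Jacobian bounds from bounded divergence providing the compactness needed to pass to the limit. Assembling these ingredients yields $\int([X,Y],\psi\nabla g)\,dz=0$ for all $\psi,g$, and hence, by linearity and the decomposition $\varphi=\sum_i\varphi_i\nabla(x_i\chi)$, the claimed implication $[X,Y]=0$.
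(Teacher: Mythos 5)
Your argument is essentially correct, but it takes a genuinely different route from the paper. The paper never reduces to scalar observables: it introduces the distributions $T_t[X,Y]$ and $T_{t,s}[X,Y]$ of \eqref{eq:distr1}--\eqref{eq:distr2}, which encode $Y(\phi^X_t)-D\phi^X_t\,Y$, proves the chain ``commutativity $\Rightarrow T_{t,s}=0\Rightarrow T_t=0$'', and then computes $\frac{d}{dt}T_t\big|_{t=0}=[X,Y]$; each step uses the change of variables $\tilde z=\phi^Y_s(z)$ (resp.\ $\phi^X_t$) so that all derivatives fall on the smooth test function and on the Jacobian density $\xi$, followed by a mollification argument to upgrade identities from a.e.\ $s$ to every $s$. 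Your scalar quantity $F(t,s)=\int\psi\,g(\phi^X_t\circ\phi^Y_s)$ collapses these two stages into a single weak mixed-partials computation, at the price of the reduction to test fields $\varphi=\psi\nabla g$ (which is fine: your decomposition $\varphi_i e_i=\varphi_i\nabla(x_i\chi)$ works, and I checked that the $D^2g$ contributions to \eqref{eq:def:lie:brack} cancel by symmetry, so that $\int([X,Y],\psi\nabla g)=\int\bigl[\dive(\psi Y)(\nabla g\cdot X)-\dive(\psi X)(\nabla g\cdot Y)\bigr]$ exactly as you need). Your symmetric-second-difference device replaces the paper's two-step evaluation $T_{t,s}\big|_{s=0}=T_t$, $\frac{d}{dt}T_t\big|_{t=0}$; to make it close you still need the $L^1_{\text{loc}}$-continuity $(\nabla g\cdot X)\circ\phi^X_\tau\circ\phi^Y_\sigma\to\nabla g\cdot X$ as $(\tau,\sigma)\to 0$, which is the same approximation-by-continuous-functions step the paper performs with mollifiers to pass from ``a.e.\ $s$'' to ``$s=0$''. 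What your route buys is economy (no intermediate distributions, a single identity to verify); what the paper's route buys is the statement $T_t=0$ itself, which is of independent interest as the weak form of $D\phi^X_t\,Y=Y\circ\phi^X_t$.

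One correction: the difficulty you single out in your third paragraph as ``the main obstacle'' is not there, and the fix you propose would actively hurt you. Differentiating $s\mapsto\int\psi\,(\nabla g\cdot X)\circ\phi^Y_s\,dx$ never requires differentiating the transported $\BV$ function $H=\nabla g\cdot X$ along the flow: pushing forward, the integral equals $\int H(z)\,\psi(\phi^Y_{-s}(z))\,\xi(-s,z)\,dz$, and the $s$-derivative falls entirely on $\psi\circ\phi^Y_{-s}$ (Lipschitz in $s$ with derivative $-\nabla\psi\cdot Y$ along trajectories) and on $\xi$ (Liouville, \eqref{eq:Liouville}), yielding $-\int\dive(\psi Y)\,H\circ\phi^Y_s\,dx$ with $H$ untouched --- which is precisely your paragraph-2 computation made rigorous. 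Invoking renormalization and the Vol'pert chain rule to write $\nabla H=D^2g\,X+(DX)^T\nabla g$ and then pairing with $Y$ would reintroduce the ill-defined product $Y\,DX$ that Definition \ref{def:Lie:bracket} (and the paper's whole duality strategy) is designed to avoid. Drop that paragraph and keep the duality argument throughout.
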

The proof of Proposition \ref{thm:comm-BV} is obtained by adapting the method used in \cite{ColomboTione}. In contrast to the Sobolev case studied in \cite{ColomboTione}, it seems to be not straightforward to prove the missing implication in Proposition \ref{thm:comm-BV}. For instance, a key step to show that the vanishing Lie bracket implies that the two flows commute, is to evaluate the function
\begin{equation*}
    t\rightarrow [X,Y]\circ\phi^X_t,
\end{equation*}
and to apply the hypothesis on vanishing Lie Bracket on the trajectories. In the $\BV$ setting, where we need a distributional definition of Lie Bracket, it is unclear to us how to give meaning to the previous function. For this reason, the geometric approach we follow in Sections \ref{Sect:hamiltonians} and \ref{Sect:nearly:incompr} can not be straightforwardly generalized to the $\BV$ setting (it is in particular not possible to employ Theorem \ref{thm:Marconi} to deal with the case $X \cdot Y^\perp \neq 0$). This constitutes a key difference between the Sobolev and the $\BV$ case and therefore we leave the reverse implication as an open problem.

\subsubsection{Outline of the paper}
The structure of the paper is as follows. Section \ref{S:preliminaries} is devoted to preliminaries: we collect some known results regarding $\BV$ functions, we introduce the notion of regular Lagrangian flow and we recall the structure of level
sets of Lipschitz functions with compact support and gradient with bounded variation. In Section \ref{Sect:hamiltonians}, we prove Proposition \ref{thm:main:hamiltonian} through a dimension-reduction argument in the spirit of \cite{ABC}. In Section \ref{Sect:nearly:incompr} we generalize the previous arguments to the case of rough vector fields with bounded divergence and we prove our main result, Theorem \ref{thm:main:n:i-intro}. Finally, some comments on the extension to the case of $\BV$ vector fields are presented in Section \ref{Sect:BV}.


\bigskip

 \textbf{Acknowledgements.} The authors acknowledge Riccardo Tione for suggesting the problem and Riccardo Tione, Giacomo del Nin and Elio Marconi for many useful discussions and insights on the topic.



\section{Preliminaries}\label{S:preliminaries}
In this section, we collect some preliminary results that will be used throughout the paper. We first give a quick overview of $\BV$ functions, Regular Lagrangian Flows and Commutator estimates. Finally, in the last subsection, we give a characterization of Level sets of Lipschitz functions with compact support.  
\subsection{BV functions}
\label{Ss:BV_funct}
In this subsection, we recall some known results concerning functions of bounded variation. For a more comprehensive treatment of the topic, we refer the reader to \cite{AFP}. Let  $u\in\BV(\Omega,\R^n)$ and $Du\in\M_b(\Omega)^{n\times n}$ the $n\times n$-valued measure representing its distributional derivative. We recall the canonical decomposition of the measure $Du$ 
\begin{equation*}
    Du=D^{\textrm{a}}u+D^\textrm{s}u = D^\textrm{a} u + D^\mathrm{c} u + D^\mathrm{j} u,
\end{equation*}
where $D^{\textrm{a}}u$ is the absolutely continuous part of $Du$ with respect to the $n$-dimensional Lebesgue measure $\L^n$ and $D^\textrm{s}u$ is the singular part of $Du$ with respect to $\L^n$. Moreover, we split $D^\textrm{s}u$ into the jump part $D^s u$ and the Cantor part $D^\mathrm{c} u$. Since $D^a u $ is an absolutely continuous function, we can write it as $D^a u = \nabla u \L^n$, i.e. we denote by $\nabla u =(\partial_i u^j) \in \R^{n \times n}$ the density of $D^a u $ with respect to $\L^n$.
We finally recall a structure theorem for $\BV$ vector fields from \cite{AFP}.

\begin{theorem}\label{thm:structure:BV}
Let $u \in \BV(\R^n,\R^n)$. Then there exists a decomposition of $\R^n=S_u \cup J_u \cup D_u$ such that the following properties hold true:
\begin{itemize}
    \item[1.] $\mathcal{H}^{n-1}(S_u)=0$;
    \item[2.] $J_u$ is a Borel subset of $S_u$ and there exist Borel functions $\nu:J_u \rightarrow \mathbb{S}^{n-1}$, $u^-$, $u^+:J_u \rightarrow \R^n$ such that, for every $\overline{x} \in J_u$, we have
    \begin{equation*}
    \dashint_{B_r^-(\overline{x})}  |u(x)-u^-(\overline{x})|dx  =o(1), \quad  \dashint_{B_r^+(\overline{x})}  |u(x)-u^+(\overline{x})|dx  =o(1), \quad \text{as $r \to 0$},
    \end{equation*}
    where $B_r^-(\overline{x}) \doteq \lbrace x \in B_r: (x-\overline{x},\nu)<0\rbrace$ and $B_r^+(\overline{x}) \doteq \lbrace x \in B_r: (x-\overline{x},\nu)>0\rbrace$;
    \item[3.] for every $\overline{x} \in D_u$, there exists $\tilde{u}(\overline{x})$ such that
    \begin{equation*}
    \dashint_{B_r(\overline{x})}  |u(x)-\tilde{u}(\overline{x})|dx  =o(1),  \quad \text{as $r \to 0$}.
    \end{equation*}
\end{itemize}
\end{theorem}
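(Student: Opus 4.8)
The plan is to recognize this as the Federer--Vol'pert structure theorem and to reconstruct its proof. First I would fix the definitions that make the three sets meaningful: let $D_u$ be the set of points $\overline x$ at which $u$ admits an approximate limit $\tilde u(\overline x)$, i.e. where $\dashint_{B_r(\overline x)}|u-\tilde u(\overline x)|\,dx=o(1)$ as $r\to 0$; let $J_u$ be the set of approximate jump points, where there exist $u^\pm(\overline x)$ and a unit normal $\nu(\overline x)\in\mathbb S^{n-1}$ satisfying the two one-sided conditions in item 2; and let $S_u\doteq\R^n\setminus(D_u\cup J_u)$ be the residual set. With these definitions the decomposition $\R^n=S_u\cup J_u\cup D_u$ is automatic and disjoint, item 3 holds by construction on $D_u$, and item 2 holds by construction on $J_u$; the Borel measurability of $\tilde u$, $u^\pm$ and $\nu$ follows since each is defined through countably many averaged-integral conditions. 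The entire content of the theorem is therefore concentrated in item 1, namely $\mathcal H^{n-1}(S_u)=0$: that $\mathcal H^{n-1}$-almost every point is either an approximate continuity point or an approximate jump point.

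To prove item 1 I would argue in two stages. The first stage localizes $S_u$, up to $\mathcal H^{n-1}$-null sets, inside a set that is $\sigma$-finite with respect to $\mathcal H^{n-1}$. Using a Poincaré inequality on balls, $\dashint_{B_r(\overline x)}|u-\overline u_{r}|\,dx\leq C\,|Du|(B_r(\overline x))/r^{n-1}$ with $\overline u_r$ the mean of $u$ on $B_r(\overline x)$, one controls the oscillation of $u$ at $\overline x$ by the upper $(n-1)$-density $\Theta^{*,n-1}(|Du|,\overline x)\doteq\limsup_{r\to 0}|Du|(B_r(\overline x))/(\omega_{n-1}r^{n-1})$, and deduces that $S_u$ lies essentially in $\{\Theta^{*,n-1}(|Du|,\cdot)>0\}$. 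By the standard density comparison lemma, on $\{\Theta^{*,n-1}(|Du|,\cdot)\geq\lambda\}$ one has $\mathcal H^{n-1}\leq C(\lambda)\,|Du|$, which is locally finite; letting $\lambda\downarrow 0$ shows this superlevel set is $\sigma$-finite with respect to $\mathcal H^{n-1}$ and carries the singular part of $Du$.

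The second stage, which is the genuine difficulty, is to show that $\mathcal H^{n-1}$-almost every point of this $\sigma$-finite set is in fact an approximate jump point, so that the truly pathological remainder $S_u$ is $\mathcal H^{n-1}$-null. My preferred route is one-dimensional slicing: for each coordinate direction, and for $\mathcal H^{n-1}$-almost every line parallel to it, the restriction of $u$ is a one-dimensional $\BV$ function, whose good representative has left and right limits at every point and jumps on an at most countable set. Combining the slicing in all directions via Fubini, and using that the carrier $\{\Theta^{*,n-1}(|Du|,\cdot)>0\}$ is countably $(n-1)$-rectifiable (Federer's criterion, since the relevant part of $|Du|$ is absolutely continuous with respect to $\mathcal H^{n-1}$ on it), one performs a blow-up at $\mathcal H^{n-1}$-a.e. such point to upgrade the one-sided one-dimensional limits into genuine two-sided $n$-dimensional approximate limits $u^\pm$ across the approximate tangent hyperplane, with $\nu$ its normal. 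This forces the point into $J_u$ and yields $\mathcal H^{n-1}(S_u)=0$.

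The main obstacle is precisely this last blow-up/rectifiability step: transferring the elementary one-dimensional jump structure into a two-sided approximate limit with a well-defined normal requires showing that the one-sided traces are independent of the approach region within each half-space, which is the heart of the Federer--Vol'pert theorem. Since in the sequel we only use the decomposition and the existence of $u^\pm,\nu,\tilde u$ rather than any new argument, the economical option is to invoke the result directly as the Federer--Vol'pert structure theorem in \cite{AFP}.
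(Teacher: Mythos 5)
The paper does not prove this statement at all: it is recalled as a classical result (the Federer--Vol'pert structure theorem) and attributed to \cite{AFP}, so there is no internal proof to compare against. Your outline is a correct sketch of the standard argument --- reduce everything to showing that the residual set is $\mathcal{H}^{n-1}$-negligible, control the oscillation via a Poincar\'e inequality by the upper $(n-1)$-density of $|Du|$, use the density comparison to get $\sigma$-finiteness with respect to $\mathcal{H}^{n-1}$, and then upgrade the one-dimensional jump structure of the slices to two-sided approximate limits by a blow-up on the rectifiable carrier --- and your closing remark that the economical option is simply to invoke \cite{AFP} is exactly what the paper does. One small point worth flagging: you define $S_u$ as the residual set disjoint from $J_u\cup D_u$, which is the reading under which item~1 ($\mathcal{H}^{n-1}(S_u)=0$) is literally true; the paper's own phrasing instead follows the convention of \cite{AFP} in which $S_u$ is the full approximate discontinuity set and $J_u\subset S_u$, and under that convention the correct conclusion is $\mathcal{H}^{n-1}(S_u\setminus J_u)=0$ (as the paper itself implicitly uses right after the statement, when it assigns a precise representative at every $x\notin S_u\setminus J_u$). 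This is an inconsistency in the statement as printed, not a gap in your argument.
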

Taking advantage of the previous theorem, it is possible to assign a value to $u$ at every point $x \notin S_u \setminus J_u$. More precisely, the precise representative of $u \in \BV(\Omega,\R^n)$ is the Borel function $u^*$ such that $u^*(x)=\tilde{u}(x)$ if $x \in D_u$ and $u^*(x)=(u^+(x)+u^-(x))/2$ if $x \in J_u$. The precise representative allows us to state the general Leibniz rule for the
product of two $\BV$ functions. More precisely, if $u$, $v \in \BV(\R^n,\R^n) \cap L^\infty(\R^n,\R^n)$, then $u v \in \BV(\R^n,\R^n)$ with
\begin{equation}\label{eq:leibniz-rule}
D    (uv)=v^* Du+ u^*Dv,
\end{equation}
see \cite{Volpert} or \cite[Theorem 3.96]{AFP}.

\subsection{Regular Lagrangian Flows}\label{sect:RLF}
Throughout this paper, we will consider autonomous vector fields $X,Y:\R^n\rightarrow \R^n$ in the space $\BV_{\text{loc}}(\R^n,\R^n)\cap L^\infty(\R^n,\R^n)$ (in short $X,Y\in \BV_{\text{loc}}\cap L^\infty$) with $\dive X,\dive Y\in L^\infty(\R^n,\R)$. We first recall that, when the velocity fields $X,Y$ are Lipschitz, then their \emph{flows} are well-defined in the classical sense, i.e. they are the maps $\phi^X_t,\phi^Y_s:\R\times \R^n\rightarrow \R^n$ satisfying
\begin{equation}
 \label{ODE}
    \begin{cases}
   \partial_t\phi^X_t(x)=X(\phi^X_t(x)), & \\
    \phi^X_0(x)=x,
    \end{cases}
\end{equation}
(and same for $\phi^Y_s$). When dealing with rough vector fields, the notion of pointwise uniqueness of \eqref{ODE} is not any more the appropriate one and we only have uniqueness ``in the selection sense", as encoded in the following definition.
\begin{definition}
  \label{def:RLF}
    Let $X\in \BV_{\text{loc}}\cap L^\infty$. A map $\phi^X_t:\R\times \R^n\rightarrow \R^n$ is a \emph{Regular Lagrangian Flow} (RLF) for the vector field $X$ if
    \begin{enumerate}
        \item the following system holds in the sense of distributions
        \begin{equation}\label{eq:Cauchy:RLF}
            \begin{cases}
\partial_t \phi_t^X(x)=X \circ \phi_t^X(x); & \\
    \phi_0^X(x)=x.
    \end{cases}
        \end{equation}
        \item 
       there exists a positive constant $C$, independent of $t$, such that for every $t \in \R$, we have that $| \lbrace x: \phi_t^X(x) \in A \rbrace|\leq C|A|$ for every Borel set $A$.
    \end{enumerate}
\end{definition}
We observe that the vector fields we consider in this work are autonomous, even if the theory is developed also for non-autonomous vector fields, i.e. in general $X=X(t,x)$. More precisely, we recall that Regular Lagrangian flows were introduced in \cite{DiPerna:Lions}, where the authors prove their existence and uniqueness for vector fields $X \in W^{1,p}(\R^n,\R^n)$, $p \geq 1$, with bounded divergence. The theory was subsequently extended to $\BV$ vector fields with bounded divergence in \cite{Ambrosio:BV}. Finally, uniqueness results in the more general class of nearly incompressible vector fields were established in \cite{BianchiniBonicatto}. Here we briefly recall the stability result for RLFs of $\BV$ vector fields, that is 
\begin{theorem}[Stability, Theorem 6.3 \cite{Ambrosio:Luminy}] 
\label{Theo:stability}
		Let   $X_n,X\in \BV_{\text{loc}}\cap L^\infty$ with $\dive (X_n)$ with bounded divergence and let $\phi^{X^n},\phi^X$
 be their corresponding Regular Lagrangian Flows. Assume that 
		\begin{equation*}
			||X_n-X||_{L^1}\to 0\quad\text{as }n\to\infty,\quad \|\dive X_n\|_\infty\leq C,\quad \forall n,
		\end{equation*}
		then
		\begin{equation*}
			\lim_{n\to\infty}\int_{\R^n} \sup_{t} |\phi^{X^n}_t(x)-\phi^X_t(x)|dx =0.
		\end{equation*}
	\end{theorem}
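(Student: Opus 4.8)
The plan is to prove stability through the Lagrangian/superposition viewpoint of Ambrosio, reducing the convergence of the flows to the \emph{uniqueness} of the Regular Lagrangian Flow of the limit field $X$. Since the integrand involves $\sup_t|\phi^{X_n}_t(x)-\phi^X_t(x)|$ and (as noted after \eqref{eq:defcommu}) it is not restrictive to work on a bounded time interval, I would fix $T>0$ and regard the trajectory maps $\Phi_n\colon x\mapsto(\phi^{X_n}_t(x))_{t\in[-T,T]}$ as functions valued in the Polish space $\Gamma\doteq C([-T,T];\R^n)$ endowed with the uniform norm. Fixing a reference probability measure $\mu=\rho\,\L^n$ with $\rho$ bounded and strictly positive (for instance a normalized Gaussian), I would introduce the measures $\eta_n\doteq(\Id\times\Phi_n)_\#\mu$ on $\R^n\times\Gamma$. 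The argument then splits into showing that $\eta_n$ converges narrowly to the measure $\eta$ induced by the flow of $X$, and upgrading this to the stated $L^1$ convergence.

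First I would record the uniform bounds coming from the hypotheses. The bound $\|\dive X_n\|_\infty\le C$, combined with item $2$ of Definition \ref{def:RLF}, gives a compression estimate $|\{x:\phi^{X_n}_t(x)\in A\}|\le e^{C|t|}|A|$ uniform in $n$, so the pushforwards $(\phi^{X_n}_t)_\#\L^n$ have equibounded densities. Together with the uniform bound on $\|X_n\|_{L^1}$ (a consequence of $\|X_n-X\|_{L^1}\to0$), the compression estimate controls the speeds of the trajectories in an integrated, equi-integrable way, namely $\int_{\R^n}\int_{-T}^{T}|X_n(\phi^{X_n}_s(x))|\,ds\,d\mu(x)$ is uniformly bounded. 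By an Ascoli–Arzelà/Prokhorov argument this yields tightness of $\{\eta_n\}$ on $\R^n\times\Gamma$, and I would extract a narrowly convergent subsequence $\eta_{n_k}\rightharpoonup\eta$.

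The heart of the proof is the identification of the limit $\eta$. Using $\|X_n-X\|_{L^1}\to0$ together with the uniform compression bounds, which make the composed fields $X_n\circ\phi^{X_n}_s$ equi-integrable along the flow, I would pass to the limit in the integral form of the ODE to conclude that $\eta$ is concentrated on pairs $(x,\gamma)$ with $\gamma$ an absolutely continuous integral curve of $X$ issuing from $x$, and that $\eta$ inherits the bounded compression property. Thus $\eta$ is a generalized (superposition) solution of the flow of $X$. At this point I would invoke the well-posedness theory for $\BV$ vector fields with bounded divergence: by Ambrosio's renormalization theorem the continuity equation for $X$ is well posed in the class of bounded densities, and by the superposition principle this forces every such $\eta$ with bounded compression to be concentrated on the \emph{unique} Regular Lagrangian Flow, i.e. $\eta=(\Id\times\Phi)_\#\mu$ with $\Phi(x)=(\phi^X_t(x))_{t}$. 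Since this limit does not depend on the subsequence, the whole sequence $\eta_n$ converges narrowly to $\eta$.

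Finally I would convert narrow convergence into the claimed $L^1$ bound. Because the limiting $\eta$ is a \emph{graph}, in the sense that for $\mu$-a.e.\ $x$ its conditional measure on $\Gamma$ is the Dirac mass at $\phi^X_\cdot(x)$, narrow convergence of $\eta_n$ to such a graph is equivalent to convergence in $\mu$-measure of $\Phi_n$ to $\Phi$, that is $\sup_t|\phi^{X_n}_t(x)-\phi^X_t(x)|\to0$ in $\mu$-measure. The displacement bounds together with the compression estimates provide the equi-integrability needed to promote convergence in measure to convergence in $L^1(\mu)$; since $\rho$ is bounded and strictly positive this gives $L^1_{\text{loc}}$ convergence, and the integrability at infinity furnished by the $L^1$ data assembles the global statement. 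I expect the main obstacle to be exactly the identification step: the entire argument hinges on the fact that, for $\BV$ fields with bounded divergence, bounded solutions of the continuity equation are renormalized and hence unique, so that the limiting superposition measure cannot spread or split mass and must coincide with the RLF of $X$. By comparison, the tightness and the narrow-to-$L^1$ upgrade are routine once the uniform compression and $L^1$ bounds are in hand.
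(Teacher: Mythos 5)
Your proposal reconstructs, essentially step by step, the proof given in the cited source: the paper itself contains no proof of Theorem \ref{Theo:stability} (it is imported verbatim from \cite{Ambrosio:Luminy}), and the argument there is exactly your scheme --- push the reference measure to superposition measures $\eta_n=(\Id\times\Phi_n)_\#\mu$ on $\R^n\times\Gamma$ with $\Gamma=C([-T,T];\R^n)$, prove tightness, identify any narrow limit as a measure with bounded compression concentrated on integral curves of $X$, invoke Ambrosio's renormalization theorem to get well-posedness of the continuity equation in $L^\infty$ and hence uniqueness of the limit as the graph of the RLF of $X$, and finally upgrade narrow convergence to a graph into convergence in $\mu$-measure and then $L^1$. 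The identification step and the graph-upgrade are correctly isolated as the crux, and your treatment of them is sound.

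The one step that does not work as literally written is tightness. You derive it from the uniform bound on $\int_{\R^n}\int_{-T}^{T}|X_n(\phi^{X_n}_s(x))|\,ds\,d\mu(x)$, i.e. from an equibounded $L^1$ action of the trajectories, invoking Ascoli--Arzel\`a. But sublevel sets of the functional $\gamma\mapsto\int_{-T}^{T}|\dot\gamma(s)|\,ds$ are \emph{not} compact in the uniform topology of $\Gamma$ (equibounded total variation allows ramps degenerating to jumps), so this bound alone does not give tightness of $\{\eta_n\}$ via Prokhorov. The repair is standard and cheap, but it must be made explicit: either use a uniform $L^\infty$ bound on the fields $\sup_n\|X_n\|_\infty<\infty$ --- which is part of the hypotheses in the original statement in \cite{Ambrosio:Luminy}, and is the natural reading of the $L^\infty$ assumption here given that the paper only applies the theorem to bounded vector fields --- so that the trajectories are equi-Lipschitz in time and tightness follows from tightness of $\mu$ plus Ascoli--Arzel\`a; or replace the $L^1$ action by a coercive functional with superlinear growth, $\gamma\mapsto\int_{-T}^{T}|\dot\gamma(s)|^{r}\,ds$ with $r>1$, whose sublevels are compact in $\Gamma$ by equi-H\"older continuity, and bound its expectation under $\eta_n$ using the compression estimate together with an $L^r$ bound on the fields. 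With that single correction your proof coincides with the one in the cited reference.
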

	
From now on we denote by $\phi_t^X(\cdot)_\sharp \mathcal{L}^d$ the push-forward of the Lebesgue measure on $\R^d$ via the function $\phi_t^X(\cdot)$. We observe, as already pointed out in \cite{DeLellisNotes}, that Condition $2.$ in Definition \ref{def:RLF} is equivalent to
\begin{equation*}
    \phi_t^X(\cdot)_\sharp \mathcal{L}^d \ll \mathcal{L}^d
    \end{equation*}
    for every $t \geq 0$. Then, for every RLF, there exists a $\xi\in L^\infty([-T,T]\times\R^n)$, for every $T>0$, such that $ \phi_t^X(\cdot)_\sharp \mathcal{L}^d= \xi \mathcal{L}^d$. The function $\xi$ will be called \textit{density} of the flow $\phi_t^X$ and, by definition, it satisfies the following “change of variables” identity
 \begin{equation}\label{eq:change-of-var}
     \int_{\R^{n+1}} \varphi(t,\phi^X_{-t}(z))dzdt= \int_{\R^{n+1}} \varphi(t,z)\xi(t,z)dzdt,\quad\forall\varphi\in\mathcal{C}_c(\R^{n+1}).
 \end{equation}   
In the smooth setting, clearly $\xi=\text{det}D\phi^X_t.$ Moreover, $\xi\in L^\infty([-T,T]\times\R^n)$ for every $T>0$ and it satisfies the analogous of Liouville Theorem, that is
\begin{equation}
    \label{eq:Liouville}
    \partial_t \xi=\dive(X)\circ\phi^X_t\xi\quad\text{distributionally}.
\end{equation}
See \cite{ColomboTione} for the details, where the Stability Theorem is used to obtain this result. 
\subsection{Structure of level sets of $\text{Lip}_c$ functions}
In this subsection, we consider $X\in L^\infty(\R^2;\R^2)\cap \BV_{\text{loc}}(\R^2)$ with $\dive X=0$ a.e. and we additionally assume that $X$ has compact support (this is not restrictive since we consider bounded vector fields). Then there exists $H\in \text{Lip}_c(\R^2,\R)$ such that $ X=\nabla^\perp H$, where $\nabla^\perp=(-\partial_2,\partial_1)$ is the orthogonal gradient.  Before we give the characterization of the level sets of $H$ (the results we state here are from \cite{BourgainKK} as in \cite{Marconi_Bonicatto_poly}), we recall that a curve $\Gamma$ in $\R^2$ is the image of a continuous, non-constant $\gamma:[a,b]\rightarrow\R^2$, thus it is a compact connected set. We say that $\Gamma$ is a simple closed curve if there exists a parametrization $\gamma:[a,b]\rightarrow \R^2$, injective on $[a,b)$ such that $\gamma(a)=\gamma(b)$.
\begin{theorem}\label{thm:struct:level:sets}
We consider $X\in L^\infty(\R^2;\R^2)\cap \BV_{\text{loc}}(\R^2)$ with $\dive X=0$ a.e. and with compact support. Then there exists a set $C\subset H(\R^2)$ with $\L^1(H(\R^2)\setminus C)=0$ and such that for every $h\in C$ the following properties hold:
    \begin{itemize}
        \item $\H^1(H^{-1}(h))<\infty$;
        \item  there exists $N=N(h)\in \N$ and $C_i(h)$ with $i\in\lbrace 1,2,\dots, N\rbrace$ disjoint simple closed curves such that
        \begin{equation*}
            H^{-1}(h)=\cup_{i=1}^N C_i(h).
        \end{equation*} 
        Furthermore, there exist Lipschitz parametrizations $\gamma_i(h):[0,\ell_i(h)]\rightarrow\R^2$ with $\left|\frac{d}{dt} ({\gamma_i(h)})_t\right|=1$ $\L^1$-a.e. $t$ and $\gamma_i(h)([0,\ell_i(h)])=C_i(h).$ \end{itemize}
        Moreover, for every $\epsilon>0$ there exists $C_\epsilon\subset H(\R^2)$, and a constant $c>0$ such that $\L^1(H(\R^2)\setminus C_\epsilon)<\epsilon$ and for every $h\in C_\epsilon$ it holds
        \begin{equation*}
            \forall x\in H^{-1}(h)\cap D_X,\qquad |X(x)|\geq c.
        \end{equation*}
   
\end{theorem}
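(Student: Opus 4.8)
The plan is to first reduce everything to the Hamiltonian $H\in\mathrm{Lip}_c(\R^2,\R)$ with $X=\nabla^\perp H$, which exists because $X$ is bounded, divergence-free and compactly supported, so that $\nabla H=X^\perp\in L^\infty\cap\BV_{\mathrm{loc}}$ vanishes outside a compact set and $H$ can be normalized to have compact support. The finiteness of the length of almost every level set is then immediate from the coarea formula: since $|\nabla H|=|X|\in L^\infty$ has compact support,
\[
\int_\R \H^1\!\left(H^{-1}(h)\right)\,dh=\int_{\R^2}|\nabla H|\,dx=\int_{\R^2}|X|\,dx<\infty,
\]
so $\H^1(H^{-1}(h))<\infty$ for $h$ in a set of full measure, which also yields $\H^1$-rectifiability of a.e. level set.

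For the decomposition into finitely many disjoint simple closed curves I would invoke the structure theory of level sets of such Hamiltonians, which is the content imported from \cite{BourgainKK,Marconi_Bonicatto_poly}. The two points to extract are: (i) for a.e. $h\neq0$ the compact set $H^{-1}(h)\subseteq\supp H$ is $\H^1$-rectifiable with no endpoints, the absence of endpoints being the geometric manifestation of $\dive X=0$, which forbids sources and sinks and forces every connected component to close up into a cycle; and (ii) the components are finite in number for a.e. $h$. Once the simple-closed-curve structure is available, the unit-speed Lipschitz parametrizations $\gamma_i(h)$ follow from $\H^1$-rectifiability and finite length by reparametrizing each Jordan curve by arc length.

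The delicate part is the uniform lower bound $|X|\ge c$ on $H^{-1}(h)\cap D_X$ for a large set of levels. Here I would argue through the period of the Hamiltonian orbits. On a good level the curve $C_i(h)$ is parametrized by arc length, $X$ is tangent to it, and $|X|=|\nabla H|$ at points of $D_X$; the coarea formula in the form $dx=|\nabla H|^{-1}\,d\H^1\,dh$ shows that the total period $T(h)=\sum_i\oint_{C_i(h)}|X|^{-1}\,d\H^1$ is integrable, with $\int_\R T(h)\,dh=\L^2(\{|X|>0\}\cap\supp H)<\infty$. If a curve enters the region $\{|X|<c\}$ over an arc of length at least some fixed $\lambda_0>0$, then $T(h)\ge\lambda_0/c$ on those levels; combining this with $\int_\R T<\infty$, or directly with the estimate $\int_\R\H^1(H^{-1}(h)\cap\{0<|\nabla H|<c\})\,dh=\int_{\{0<|\nabla H|<c\}}|\nabla H|\,dx\to0$ as $c\to0$, bounds the measure of the bad levels by $O(c)$. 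One then sets $C_\epsilon$ to be the good levels avoiding $\{|X|<c\}$ with $c=c(\epsilon)$ chosen small.

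I expect the main obstacle to be precisely the justification that a level curve cannot merely graze the set $\{|X|<c\}$, i.e. the uniform lower bound $\lambda_0$ on the arc length spent there; this is what converts the integral smallness $\int_{\{0<|\nabla H|<c\}}|\nabla H|\,dx\to0$ into smallness of the measure of the exceptional levels. In the continuous setting a small value of $|X|$ at a point propagates to a definite neighborhood and the bound is transparent, but in the $\BV$ setting this, together with the control of the critical values $H(\{\nabla H=0\})$ through the weak Sard property for gradients in $\BV$, is exactly the fine information supplied by \cite{BourgainKK,Marconi_Bonicatto_poly}. The loss of an $\epsilon$ in the measure of admissible levels, with $c=c(\epsilon)\to0$ as $\epsilon\to0$, is unavoidable, since genuine critical points may persist on a null set of levels.
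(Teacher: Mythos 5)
The paper does not actually prove this statement: it is imported verbatim from \cite{BourgainKK} as presented in \cite{Marconi_Bonicatto_poly} (see the sentence preceding the theorem), so there is no internal proof to compare against. Your sketch follows the standard route for the parts that are genuinely elementary: the reduction to a compactly supported Lipschitz Hamiltonian with $X=\nabla^\perp H$ is correct, and the coarea formula does give $\int_\R \H^1(H^{-1}(h))\,dh=\int_{\R^2}|X|\,dx<\infty$, hence finite length of a.e.\ level set (after discarding the single value $h=0$, whose level set contains the unbounded complement of $\supp H$). The decomposition into finitely many disjoint simple closed curves, and the fact that $\dive X=0$ forces components to close up, you rightly defer to the cited structure theory; this is exactly what the paper does.

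The one place where your argument, as written, does not close is the uniform lower bound $|X|\ge c$ on $H^{-1}(h)\cap D_X$. Your period/arc-length estimate controls the measure of those levels whose curves spend a definite length $\lambda_0>0$ inside $\{|X|<c\}$, but a level set can contain a point of $D_X$ with $|X|<c$ while meeting $\{|X|<c\}$ in an arbitrarily short (even $\H^1$-null) arc, and then the quantity $\int_{\{0<|\nabla H|<c\}}|\nabla H|\,dx$ does not see it at all; you acknowledge this, but the missing lower bound $\lambda_0$ is not a technicality to be waved at the references after setting the argument up this way --- it is the entire content of the last clause. The cleaner derivation, and the one consistent with how the paper actually uses the result (Corollary \ref{cor:pos:vect:level}), is to take from the references the qualitative statement that for a.e.\ $h$ one has $\inf_{H^{-1}(h)\cap D_X}|X|=c_h>0$, and then obtain $C_\epsilon$ for free: the sets $A_n=\lbrace h: c_h\ge 1/n\rbrace$ increase to a full-measure subset of $H(\R^2)$, so continuity of the measure gives $\L^1(H(\R^2)\setminus A_n)<\epsilon$ for $n$ large, with $c=1/n$. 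No quantitative $O(c)$ bound on the exceptional levels is needed, and none is claimed in the statement.
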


\begin{corollary}\label{cor:pos:vect:level}
    In the assumption of the previous theorem, there exists a set $\tilde E_X\subset H(\R^2)$ with $\L^1(H(\R^2)\setminus \tilde E_X)=0$ such that for every $h\in \tilde E_X$, there exists $c_h>0$ such that $|X(x)|>c_h$ for every $x\in\lbrace H=h\rbrace $.
\end{corollary}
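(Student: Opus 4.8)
The plan is to deduce the statement from Theorem~\ref{thm:struct:level:sets} in two steps: first, upgrade the ``$\L^1$-measure up to $\epsilon$'' conclusion of its last part to a full-measure statement by a countable exhaustion in $\epsilon$; second, upgrade the lower bound from the points of $H^{-1}(h)\cap D_X$ to all the points of $\{H=h\}$. For the first step I would apply the last part of Theorem~\ref{thm:struct:level:sets} with $\epsilon=1/n$, $n\in\N$, obtaining sets $C_{1/n}\subset H(\R^2)$ and constants $c_n>0$ with $\L^1(H(\R^2)\setminus C_{1/n})<1/n$ and $|X(x)|\ge c_n$ for every $x\in H^{-1}(h)\cap D_X$ and every $h\in C_{1/n}$. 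Writing $C$ for the full-measure set given by the first part of the theorem, I set $\tilde E_X\doteq C\cap\bigcup_n C_{1/n}$. Since $H(\R^2)\setminus\tilde E_X\subseteq(H(\R^2)\setminus C)\cup\bigcap_n(H(\R^2)\setminus C_{1/n})$ and the measure of the second set is smaller than $1/n$ for every $n$, we get $\L^1(H(\R^2)\setminus\tilde E_X)=0$. For each $h\in\tilde E_X$ I fix $n=n(h)$ with $h\in C_{1/n}$ and put $c_h\doteq c_n$; then $\{H=h\}$ is a finite union of disjoint simple closed curves and $|X|\ge c_h$ holds on $\{H=h\}\cap D_X$.

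It remains to remove the restriction to $D_X$. Since $X=\nabla^\perp H\in\BV_{\loc}$, its approximate discontinuity set $S_X$ is $\L^2$-negligible, so the coarea formula for the Lipschitz function $H$ yields
\begin{equation*}
\int_\R\H^1\big(\{H=h\}\cap S_X\big)\,dh=\int_{S_X}|\nabla H|\,dx=0 .
\end{equation*}
Hence, after discarding from $\tilde E_X$ the further $\L^1$-null set of heights $h$ for which $\H^1(\{H=h\}\cap S_X)>0$, for every remaining $h$ we have that $\H^1$-a.e.\ point of each curve $C_i(h)$ lies in $D_X$ and there satisfies $|X|\ge c_h$. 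To pass from this $\H^1$-a.e.\ bound to a bound at \emph{every} point of $C_i(h)$, I would use the geometry recorded in Theorem~\ref{thm:struct:level:sets}: along the arc-length parametrization $\gamma_i(h)$ the field $X=\nabla^\perp H$ is tangent to $C_i(h)$ and $|X|=|\nabla H|$, while the identity $\partial_1\partial_2H=\partial_2\partial_1H$ forces the jump of $\nabla H$ to be normal to the jump set; exploiting that the good points are dense on the curve, the lower bound should propagate to the one-sided traces and hence to the (precise) value of $X$ at the remaining, at most $\H^1$-null, exceptional points, at the price of replacing $c_h$ by $c_h/2$.

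The step I expect to be the main obstacle is precisely this last pointwise upgrade. The lower bound furnished by Theorem~\ref{thm:struct:level:sets} lives on the one-dimensional level set, so it cannot be transferred to the planar traces of $X$ by a naive averaging over half-balls, and one is forced to use the specific interplay between the Hamiltonian structure $X=\nabla^\perp H$, the normality of the jumps of $\nabla H$, and the simple-closed-curve structure of the level sets to control $X$ at the exceptional points. By contrast, the countable-union argument and the coarea computation are routine. (If one can show directly that $H^{-1}(h)\subseteq D_X$ for a.e.\ $h$, then the first step alone already yields the corollary.)
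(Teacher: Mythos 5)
The paper offers no written proof of Corollary \ref{cor:pos:vect:level}: it is stated as an immediate consequence of Theorem \ref{thm:struct:level:sets}, and the intended argument is precisely your first step, namely applying the last part of that theorem with $\epsilon=1/n$ and taking $\tilde E_X=C\cap\bigcup_n C_{1/n}$; that step is correct and complete. Your second step is where the comparison becomes interesting. You are right that, read literally for a merely $\BV$ field, the conclusion ``$|X(x)|>c_h$ for \emph{every} $x\in\{H=h\}$'' requires saying something about the points of $S_X$ lying on the level set, and your coarea computation correctly shows that, after discarding a further $\L^1$-null set of values $h$, one has $\H^1(\{H=h\}\cap S_X)=0$. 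But the final ``pointwise upgrade'' you sketch does not close, and you say so yourself: the good points form an $\H^1$-full subset of a curve, which is $\L^2$-negligible and therefore gives no control on the half-ball averages defining the traces $X^\pm$, and the asserted propagation of the lower bound to the precise representative via the normality of the jumps of $\nabla H$ is never actually carried out.

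The reason this does not damage the paper is that the extra generality is never used. Wherever the corollary is invoked (Sections \ref{Sect:hamiltonians} and \ref{Sect:nearly:incompr}) the vector fields are assumed continuous, so $S_X=\emptyset$, the set $D_X$ exhausts the plane, and the statement follows from your first step alone. Moreover, even in the $\BV$ setting the quantity the corollary ultimately feeds into is the integral in \eqref{eq:ham:vect:field}, for which a lower bound on $|X|$ at $\H^1$-a.e.\ point of each curve $C_i(h)$ --- exactly what your coarea step delivers --- is already sufficient. In summary: your step 1 reproduces the paper's implicit proof; your step 2 identifies a genuine imprecision in the statement for discontinuous $X$ but does not resolve it, and the clean fix is either to add the continuity hypothesis under which the corollary is actually applied, or to weaken ``every $x\in\{H=h\}$'' to ``every $x\in\{H=h\}\cap D_X$'' (equivalently, ``$\H^1$-a.e.\ $x\in\{H=h\}$'' after your coarea reduction).
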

\begin{remark}
    We remark that a slightly different characterization is needed for vector fields that do not have compact support, where we lose the property for the level sets to be finite union of simple closed curves. In this paper we omit this description in order to avoid further technicalities.
\end{remark}
We recall that, under this assumption, the Hamiltonian structure is compatible with the flow, namely 
\begin{theorem}[Theorem 2.6 in \cite{Marconi_Bonicatto_poly}]
    Let $X=\nabla^\perp H$ be a $\BV$ vector field. If $\phi^X_t:\R^2\rightarrow\R^2$ is its unique RLF, then for $\L^2$-a.e. $x\in\R^2$ and for every $t\in\R$,
    \begin{equation*}
        H(\phi^X_t(x))=H(x).
    \end{equation*}
\end{theorem}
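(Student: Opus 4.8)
The plan is to approximate $X$ by smooth Hamiltonian vector fields, for which conservation of the Hamiltonian along trajectories is an elementary computation, and then to pass to the limit using the stability of Regular Lagrangian Flows (Theorem \ref{Theo:stability}). Since $X$ is bounded, we may assume without loss of generality that it has compact support, so that $H\in\Lip_c(\R^2,\R)$. Let $(\rho_n)_n$ be a standard family of mollifiers and set $H_n\doteq H*\rho_n\in C^\infty_c(\R^2,\R)$ and $X_n\doteq\nabla^\perp H_n=(\nabla^\perp H)*\rho_n=X*\rho_n$. Each $X_n$ is then smooth, compactly supported and divergence-free; moreover $X\in L^1$ (being bounded with compact support), so $\|X_n-X\|_{L^1}\to 0$, while $\|\dive X_n\|_\infty=0$ for every $n$.

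For the smooth field $X_n$ the flow $\phi^{X_n}_t$ is the classical one and conserves $H_n$, since
\[
\frac{d}{dt}H_n(\phi^{X_n}_t(x))=\nabla H_n(\phi^{X_n}_t(x))\cdot X_n(\phi^{X_n}_t(x))=\nabla H_n\cdot\nabla^\perp H_n=0
\]
by orthogonality of $\nabla H_n$ and $\nabla^\perp H_n$; hence $H_n(\phi^{X_n}_t(x))=H_n(x)$ for every $x$ and every $t$. I would then invoke Theorem \ref{Theo:stability}, which applies since $\|X_n-X\|_{L^1}\to 0$ and the divergences are uniformly (in fact identically) bounded, to obtain
\[
\int_{\R^2}\sup_t|\phi^{X_n}_t(x)-\phi^X_t(x)|\,dx\longrightarrow 0 .
\]
Passing to a subsequence (not relabeled), this gives $\sup_t|\phi^{X_n}_t(x)-\phi^X_t(x)|\to 0$ for $\L^2$-a.e. $x$, i.e. for a.e. $x$ the convergence $\phi^{X_n}_t(x)\to\phi^X_t(x)$ is uniform in $t$. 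Since $H$ is Lipschitz with compact support, we also have $H_n\to H$ uniformly on $\R^2$.

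To conclude, I would fix such an $x$ and an arbitrary $t\in\R$ and estimate
\[
|H_n(\phi^{X_n}_t(x))-H(\phi^X_t(x))|\le \|H_n-H\|_\infty+\Lip(H)\,|\phi^{X_n}_t(x)-\phi^X_t(x)|\longrightarrow 0 .
\]
Combining this with the approximate conservation identity $H_n(\phi^{X_n}_t(x))=H_n(x)\to H(x)$ yields $H(\phi^X_t(x))=H(x)$ for $\L^2$-a.e. $x$ and every $t\in\R$, as claimed.

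The hard part is the order of quantifiers: the statement must hold for a.e. $x$ simultaneously for all $t$, not merely for a.e. $(t,x)$. This is precisely why the stability estimate must be used in the form with the supremum over $t$ taken inside the spatial integral; extracting a subsequence then yields convergence of the flows that is uniform in $t$ at almost every fixed point, which is exactly what transfers the conservation identity through the limit for every $t$ at once. A secondary technical point requiring care is the reduction to compact support and the verification that the mollified fields genuinely retain the Hamiltonian structure $X_n=\nabla^\perp H_n$, since it is this structure that makes the pointwise computation $\nabla H_n\cdot\nabla^\perp H_n=0$ legitimate at the approximate level.
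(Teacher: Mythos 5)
Your argument is correct, but note that the paper does not actually prove this statement: it is imported verbatim as Theorem 2.6 of \cite{Marconi_Bonicatto_poly} and used as a black box. What you have written is therefore a genuine, essentially self-contained proof of a result the authors only cite. The route you take --- mollify $H$, observe that the smooth flows conserve $H_n=H*\rho_n$ exactly, and transfer the identity through the limit using the stability theorem (Theorem \ref{Theo:stability}) with the supremum over $t$ inside the spatial integral --- is the standard one, and your emphasis on the quantifier order (a.e.\ $x$ for \emph{all} $t$, obtained by extracting a subsequence along which $\sup_t|\phi^{X_n}_t(x)-\phi^X_t(x)|\to 0$ pointwise a.e.) is exactly the point that makes the argument work. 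This buys something the paper's citation does not: a proof that visibly uses only tools already stated in Section \ref{S:preliminaries}.

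Two small points deserve a word more care. First, the stability estimate is really a statement for $\sup_{t\in[-T,T]}$ with $T$ fixed; you should run the argument for each $T\in\N$ and take a countable union of null sets to get the conclusion for every $t\in\R$. Second, the reduction to compact support cannot be done by truncating $X$ itself, since that destroys the divergence-free (Hamiltonian) structure on which the computation $\nabla H_n\cdot\nabla^\perp H_n=0$ rests; one should instead replace $H$ by $\chi H$ for a cutoff $\chi$ equal to $1$ on a ball containing all trajectories up to time $T$ issued from the region of interest (which is possible because $X$ is bounded), and then invoke the locality of the RLF to identify the two flows there. Neither point is a gap in substance --- the paper itself makes the same compact-support reduction without comment --- but both should be said explicitly if this proof is to stand on its own.
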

In particular, if we fix a representative for the RLF of $X$, it holds that for every $h\in E_X$, for every $x\in C_i(h)$, where $C_i(h)\in H^{-1}(h)$ is some cycle, and $\gamma_i(h)$ is the parametrization of Theorem \ref{thm:struct:level:sets}, then $\phi^X_t(x)=\gamma_i(h)(\bar\tau)$, where $\bar\tau\in[0,\ell_i(h)]$ is uniquely determined by
\begin{equation}\label{eq:ham:vect:field}
    t=N \int_0^{\ell_i(h)}\frac{1}{|X(\gamma_i(h)(\tau))|}d\tau+\int_{\gamma_i(h)^{-1}(x)}^{\bar\tau}\frac{1}{|X(\gamma_i(h)(\tau))|}d\tau,
\end{equation}
where $N\in\N$.

\section{Hamiltonian vector fields}\label{Sect:hamiltonians}
In this section, we consider two autonomous divergence-free vector fields in the real plane. The analysis in this setting is facilitated by the Hamiltonian structure of the two vector fields. Indeed, if $X,Y\in L^\infty(\R^2;\R^2)$ with $\dive X=\dive Y=0$ a.e., then there exist two Lipschitz Hamiltonians $H,K:\R^2\rightarrow \R$ such that
\begin{equation}
    X=\nabla^\perp H,\qquad Y=\nabla^\perp K,
\end{equation}
where $\nabla^\perp=(-\partial_2,\partial_1)$ is the orthogonal gradient. Here the main property that we want to exploit, requiring the Sobolev regularity of the vector fields, is the fact that the trajectories of particles advected by $X$ (resp. $Y$) are contained in the level sets of the Hamiltonian $H$ (resp. $K$). Indeed, if $X,Y$ have $W^{1,p}_\text{loc}$ regularity in space, the Hamiltonians are preserved by their RLFs (see Definition \ref{def:RLF}), namely 
\begin{equation*}
    H\circ \phi^X_t=\text{const}, \qquad K\circ \phi^Y_s=\text{const}, 
\end{equation*}
and the RLFs are uniquely determined by
\begin{equation*}
    \phi^X_t(x)=x,\quad\forall x\in H^{-1}(S_H),
\end{equation*}
where $S_H$ denotes the singular values of $H$, namely
\begin{equation*}
    S_H=\lbrace h\in \R: \exists x \in \R^2: H(x)=h, X(x)=0\rbrace,
\end{equation*}
which is a closed set, under the assumption that $X$ is continuous with compact support. Clearly, the same holds for the Hamiltonian $K$ along the flow of the vector field $Y$ (under the same continuity assumption on the vector field $Y$).  Moreover, the flow $\phi^X_t$ (resp. $\phi_s^Y$) is the unique solution of $\partial_t\phi^X_t=X(\phi^X_t)$ (resp. $\partial_s\phi^Y_t=Y(\phi^Y_s)$), understood as a 1-dimensional problem on the level sets of the Hamiltonian $H$, see in particular Formula \eqref{eq:ham:vect:field}.

\medskip

 The aim of this section is to prove the following proposition.
\begin{proposition}\label{thm:commutativity}
    Let $X,Y\in W^{1,p}_{loc}(\R^2,\R^2)\cap C^0(\R^2,\R^2)$ be two bounded vector fields and assume that $\dive X=\dive Y=0$ almost everywhere. Then 
    \begin{equation}
        [X,Y]=0 \quad \Rightarrow \quad \phi^X_t\circ\phi^Y_s=\phi^Y_s\circ\phi^X_t,\quad\forall t,s\in\R, \quad \L^2\text{-a.e. } x\in\R^2.
    \end{equation}
\end{proposition}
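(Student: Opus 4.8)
The plan is to start from the distributional form of \eqref{eq:Lie:bracket:intro}, which is available here since $X,Y$ are Sobolev with bounded divergence. As $\dive X=\dive Y=0$, it reduces to $[X,Y]=-\nabla^\perp(X\cdot Y^\perp)$, so that the hypothesis $[X,Y]=0$ is equivalent to $X\cdot Y^\perp=c$ for some constant $c\in\R$, i.e. to property \eqref{eq:property:scalar:product}. I would then split the analysis according to whether $c\neq 0$ or $c=0$; up to replacing $Y$ by $-Y$ (which only turns $\phi^Y_s$ into $\phi^Y_{-s}$ and preserves the commutation relation) I may assume $c\geq 0$.

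Case $c\neq 0$. Here $|X|\,|Y|\geq |X\cdot Y^\perp|=|c|>0$, so both $X$ and $Y$ are bounded away from $0$. By continuity of $X$, around each point $x_0$ I can choose $\mathbf e=X(x_0)/|X(x_0)|\in\mathbb S^1$ and a ball on which $X\cdot\mathbf e>\delta$; after truncating $X$ outside a large ball (the flow stays bounded on $[-T,T]$), Theorem \ref{thm:Marconi} produces a representative $\phi^X_t\in C^0\cap W^{1,p}$ on that ball, hence $\phi^X_t\in W^{1,p}_{\text{loc}}$, and symmetrically for $\phi^Y_s$. In particular $\phi^X_t$ is weakly (Lie) differentiable with respect to $Y$, so the equivalence of \cite{ColomboTione} (see also \cite[Corollary 4.2]{ColomboTione} and \cite{Eliodiff}) yields the commutativity from $[X,Y]=0$. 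This is the easy case, where one of the fields plays the role of the direction $\mathbf e$.

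Case $c=0$. Now $X\cdot Y^\perp=0$ a.e., hence $X\parallel Y$; writing $X=\nabla^\perp H$, $Y=\nabla^\perp K$ one gets $X\cdot\nabla K=-X\cdot Y^\perp=0$ and, by antisymmetry of $(\cdot)\cdot(\cdot)^\perp$, also $Y\cdot\nabla H=0$. Thus $H$ is preserved along $\phi^Y_s$ and $K$ along $\phi^X_t$, so the trajectories of both fields lie in the common level curves of $H$. I would reduce to a single level set via Theorem \ref{thm:struct:level:sets} and Corollary \ref{cor:pos:vect:level}: for $\L^1$-a.e.\ $h$, $H^{-1}(h)=\bigcup_i C_i(h)$ is a finite union of disjoint simple closed curves with $|X|\geq c_h>0$. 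On each such curve both $X$ and $Y$ are tangent, and the crucial step is to show that $Y=\lambda\,X$ with $\lambda$ \emph{constant} along $C_i(h)$ (equivalently $K=g(H)$ locally, so $Y=g'(H)X$ and $\lambda=g'(h)$). Granting this, the one-dimensional description \eqref{eq:ham:vect:field} gives $\phi^Y_s=\phi^X_{\lambda s}$ on $C_i(h)$, and the group property of $\phi^X$ yields $\phi^X_t\circ\phi^Y_s=\phi^X_{t+\lambda s}=\phi^Y_s\circ\phi^X_t$ there. On the singular set $\{X=0\}=H^{-1}(S_H)$, where $\phi^X_t=\Id$ a.e., I would use that $H$ is preserved along $\phi^Y_s$ to conclude $X\circ\phi^Y_s=0$, so the $Y$-trajectories stay in $\{X=0\}$ and the flows commute trivially; the symmetric argument handles $\{Y=0\}$. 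Assembling over $h$ through the coarea formula then gives commutativity $\L^2$-a.e.

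The main obstacle is the case $c=0$, and precisely the constancy of $\lambda=|Y|/|X|$ on each level curve $C_i(h)$. This is where $\dive Y=0$ must be converted, in the weak setting, into the statement that $\lambda$ is constant along $X$-trajectories — formally $\dive(\lambda X)=X\cdot\nabla\lambda=0$, but $\lambda$ is only continuous, so this requires exploiting the structure of the level sets of $H$ together with a careful chain rule for $K=g(H)$. The measure-theoretic assembly over level sets and the handling of the singular set, though routine in spirit, also demand attention.
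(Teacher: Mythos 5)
Your overall strategy coincides with the paper's: reduce $[X,Y]=0$ to $X\cdot Y^\perp=\mathrm{const}$ via the distributional identity of Lemma \ref{lem:eq:for:bracket}, treat $c\neq 0$ through Sobolev regularity of the flow (Theorem \ref{thm:elio}) combined with the equivalence of \cite{ColomboTione}, and treat $c=0$ by reducing to the common level curves of $H$ and $K$ and reparametrizing time there. One remark on the step you single out as the main obstacle: the constancy of the proportionality factor along a level curve is less delicate than you fear, and your concern that it is ``only continuous'' is misplaced. On an $\epsilon$-neighbourhood of a curve $C_i(h)$ with $h\in\tilde E_X\cap \tilde E_Y$, both $|X|$ and $|Y|$ are bounded below by continuity, so $\alpha=\frac{X\cdot Y}{|Y|^2}$ (the paper's choice, with $X=\alpha Y$) inherits $W^{1,p}_{\mathrm{loc}}$ regularity there; then $0=\dive X=\dive(\alpha Y)=\nabla\alpha\cdot Y$ a.e.\ gives $\frac{d}{ds}\,\alpha\circ\phi^Y_s=0$, and continuity upgrades this to genuine constancy along trajectories. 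This is exactly Proposition \ref{prop:commu-nonzeroset}, and the final reparametrization via \eqref{eq:ham:vect:field} is the same as yours.

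The genuine gap is in your treatment of the set $\lbrace X=0\rbrace$. From $X(x)=0$ and the conservation of $H$ along $\phi^Y_s$ you cannot conclude $X\circ\phi^Y_s(x)=0$: the $Y$-trajectory stays in the level set $\lbrace H=H(x)\rbrace$, but $X=\nabla^\perp H$ need not vanish on the whole of that level set, which is a singular level set and may well contain points where $X\neq 0$ (also, $\lbrace X=0\rbrace$ is in general a proper subset of $H^{-1}(S_H)$, not equal to it). The paper closes this gap in Proposition \ref{prop:zero} by a measure-theoretic argument: if $X$ does not vanish identically on the connected component $\mathcal C_k$ of $\lbrace K=K(x)\rbrace$ through $x$, then $\mathcal C_k$ meets the set $\lbrace X\neq 0\rbrace\cap E_X^c$, which is $\L^2$-negligible by the coarea formula; covering the bad initial points by $\bigcup_{q\in\Q}(\phi^Y_q)^{-1}(E)$ and using the incompressibility of $Y$ shows they form a null set. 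Alternatively (Remark \ref{rmk:gronwall}), since $X,Y$ are Sobolev and $[X,Y]=0$, one has $\frac{d}{ds}|X\circ\phi^Y_s|^2\le 2\,|DY(\phi^Y_s)|\,|X\circ\phi^Y_s|^2$, and Gronwall gives $X\circ\phi^Y_s=0$ whenever $X(x)=0$. One of these arguments must replace your appeal to the conservation of $H$.
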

This result extends in particular \cite[ Corollary 4.2]{ColomboTione} to vector fields that can vanish on the real plane, so that the classical equivalence is established. As explained above, the main property used here is the following well-known fact in Hamiltonian dynamics
\begin{equation}\label{eq:condition-possion}
    [X,Y]=0 \quad\iff \lbrace H,K\rbrace=\text{const},
\end{equation}
where $\lbrace \cdot,\cdot\rbrace$ denotes the Poisson Bracket. Such formula holds also in this weak context, as proved in Lemma \ref{lem:eq:for:bracket}. In the two-dimensional case, condition \eqref{eq:condition-possion} is equivalent to the following property of the two vector fields:
\begin{equation*}
    X\cdot Y^\perp=\text{const}.
\end{equation*}

Before proving Proposition \ref{thm:commutativity}, we give the following general result
\begin{lemma}\label{lem:eq:for:bracket}
    Let $X,Y\in W^{1,p}_{\text{loc}}(\R^2)\cap L^\infty(\R^2)$ with $\dive X,\dive Y\in L^\infty(\R^2)$. Then
    \begin{equation}\label{eq:Lie:bracket}
        [X,Y]= \dive YX-\dive X Y -\nabla^\perp (X\cdot Y^\perp), \quad\text{distributionally}
        \end{equation}
        In particular, if $\dive X=\dive Y=0$ then 
        \begin{equation*}
            [X,Y]=0\qquad\text{iff}\qquad X\cdot Y^\perp=\text{const}.
        \end{equation*}
\end{lemma}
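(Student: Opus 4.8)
The plan is to verify the identity \eqref{eq:Lie:bracket} in the distributional sense by testing against an arbitrary $\varphi\in C^1_c(\R^2,\R^2)$ and manipulating the resulting integrals. Recall from Definition \ref{def:Lie:bracket} that the distributional Lie bracket is defined by
\begin{equation*}
    \int_{\R^2}([X,Y],\varphi)\,dz = \int_{\R^2}\bigl((X,D\varphi\, Y)+\dive Y\,(X,\varphi)-(Y,D\varphi\, X)-\dive X\,(Y,\varphi)\bigr)\,dz,
\end{equation*}
while the right-hand side of \eqref{eq:Lie:bracket} pairs with $\varphi$ as
\begin{equation*}
    \int_{\R^2}\bigl(\dive Y\,(X,\varphi)-\dive X\,(Y,\varphi)\bigr)\,dz-\int_{\R^2}\bigl(\nabla^\perp(X\cdot Y^\perp),\varphi\bigr)\,dz.
\end{equation*}
Since the divergence terms already match, the identity \eqref{eq:Lie:bracket} reduces to showing
\begin{equation}\label{eq:reduction-target}
    \int_{\R^2}\bigl((X,D\varphi\, Y)-(Y,D\varphi\, X)\bigr)\,dz = -\int_{\R^2}\bigl(\nabla^\perp(X\cdot Y^\perp),\varphi\bigr)\,dz.
\end{equation}

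First I would establish \eqref{eq:reduction-target} in the smooth case by a pointwise computation, writing out both sides in components. Expanding $(X,D\varphi\,Y)-(Y,D\varphi\,X)=\sum_{i,j}\partial_j\varphi_i\,(X_i Y_j-Y_i X_j)$ and noting that $X_iY_j-Y_iX_j$ is the antisymmetric combination controlled by the scalar $X\cdot Y^\perp=X_1Y_2-X_2Y_1$, one checks directly that the smooth quantity $(X,D\varphi\,Y)-(Y,D\varphi\,X)$ equals $-\bigl(\nabla^\perp(X\cdot Y^\perp),\varphi\bigr)$ after integrating by parts once to move the derivative off $X\cdot Y^\perp$ and onto $\varphi$; this is the classical identity \eqref{eq:Lie:bracket:intro} recorded in \cite{Arnold:khesin}. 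Since $X,Y\in W^{1,p}_{\text{loc}}\cap L^\infty$, the product $X\cdot Y^\perp\in W^{1,p}_{\text{loc}}\cap L^\infty$ by the Leibniz/chain rule for Sobolev functions, so $\nabla^\perp(X\cdot Y^\perp)$ is a well-defined $L^p_{\text{loc}}$ function and the integration by parts is justified; I would make this rigorous by approximating $X,Y$ with smooth mollifications $X_\e,Y_\e$, passing to the limit in each integral using strong $L^p_{\text{loc}}$ convergence of $X_\e,Y_\e,DX_\e,DY_\e$ together with the uniform $L^\infty$ bounds to control the products. The main point to watch is that every bilinear term involves one factor in $L^\infty$ and one factor converging strongly in $L^p_{\text{loc}}$, so the products converge and the boundary terms vanish because $\varphi$ has compact support.

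Once \eqref{eq:Lie:bracket} is established distributionally, the equivalence in the divergence-free case is immediate: if $\dive X=\dive Y=0$ a.e., then $[X,Y]=-\nabla^\perp(X\cdot Y^\perp)$ distributionally, so $[X,Y]=0$ if and only if $\nabla^\perp(X\cdot Y^\perp)=0$ in $\mathcal D'$, which holds precisely when the Sobolev function $X\cdot Y^\perp$ has vanishing gradient, i.e. is (locally) constant. Since $X\cdot Y^\perp\in L^\infty(\R^2)$ and $\R^2$ is connected, this means $X\cdot Y^\perp=\text{const}$ globally. The main obstacle I anticipate is purely technical rather than conceptual: justifying the integration by parts and the mollification limit for the product $X\cdot Y^\perp$ under only $W^{1,p}_{\text{loc}}$ regularity, ensuring that the weak derivative $\nabla^\perp(X\cdot Y^\perp)$ genuinely satisfies the Leibniz rule $\nabla(X\cdot Y^\perp)=DX^\top Y^\perp + DY^{\perp\top}X$ as an $L^p_{\text{loc}}$ identity; this is standard for Sobolev products of $L^\infty\cap W^{1,p}$ functions but should be stated carefully.
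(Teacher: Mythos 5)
Your proposal is correct and rests on the same key computation as the paper: expanding $(X,D\varphi\,Y)-(Y,D\varphi\,X)$ in components to isolate the antisymmetric combination $X_1Y_2-X_2Y_1=-(X\cdot Y^\perp)$, which is exactly how the paper proves \eqref{eq:Lie:bracket}. The only difference is that you route through mollification and the Sobolev Leibniz rule in order to realize $\nabla^\perp(X\cdot Y^\perp)$ as an $L^p_{\text{loc}}$ function and then integrate by parts, whereas the paper keeps the derivative on the test function throughout, so the identity is purely algebraic at the level of $L^\infty$ fields and needs no approximation argument --- this is precisely what lets the paper observe afterwards that the lemma persists in the $\BV$ setting, where your Leibniz-rule step would not be available. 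Your handling of the divergence-free case (zero distributional gradient on connected $\R^2$ implies constant) is fine and matches what the paper leaves implicit.
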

\begin{proof} 
Let $\varphi\in C^1_c(\R^2)$. Then, by definition (see Equation \eqref{eq:def:lie:brack}), the distributional Lie Bracket reads as
\begin{align*}
    \int_{\R^2} ([X,Y],\varphi)dz&=\int_{\R^2}(X,D\varphi Y)-(Y,D\varphi X)+\dive Y(X,\varphi)-\dive X(Y,\varphi)dz.
\end{align*}

In the general case, we have
\begin{align*}
    \int_{\R^2}(X,D\varphi Y)-(Y,D\varphi X)dz&=\int_{\R^2}(Y_1X_1\partial_1\varphi_1+X_1Y_2\partial_2\varphi_1+ Y_1X_2\partial_1\varphi_2+Y_2X_2\partial_2\varphi_2)dz\\ &-\int_{\R^2}(X_1Y_1\partial_1\varphi_1+Y_1X_2\partial_2\varphi_1+ X_1Y_2\partial_1\varphi_2+X_2Y_2\partial_2\varphi_2)dz\\&
    =\int_{\R^2}(X_1Y_2\partial_2\varphi_1+ Y_1X_2\partial_1\varphi_2-Y_1X_2\partial_2\varphi_1- X_1Y_2\partial_1\varphi_2)dz\\&
    =\int_{\R^2}(X\cdot Y^\perp)(-\partial_2\varphi_1+\partial_1\varphi_1)dz,
\end{align*}
which immediately gives the statement.
\end{proof}
\begin{remark}
    We observe that Equation \eqref{eq:Lie:bracket} and Lemma \ref{lem:eq:for:bracket} hold also in the $\BV$ setting.
\end{remark}
\begin{remark}
   As anticipated before, the condition $[X,Y]=0$ for Hamiltonian vector fields corresponds to $\lbrace H,K\rbrace=\text{const}$. In particular, this formula holds in any even dimension, giving strong geometric properties to the flows of Hamiltonian vector fields (that is, the Lie bracket of two Hamiltonian vector fields is itself a Hamiltonian vector field). 
\end{remark}

\subsection{Analysis of $X\cdot Y^\perp=\text{const}$}\label{sub:scalprod}
This subsection is devoted to the proof of Proposition \ref{thm:commutativity}, which in turn implies Proposition \ref{thm:main:hamiltonian} in virtue of \cite[Theorem 1.1]{ColomboTione}. Thus, we are just left with proving that the vanishing of the Lie bracket implies the commutativity of flows in the case where the two vector fields $X,Y$ are Hamiltonian and continuous with Sobolev regularity. As explained above, this boils down to discussing the two separate cases $X\cdot Y^\perp\not=0$ and $X\cdot Y^\perp=0$. In particular, if $X\cdot Y^\perp\not=0$, under the assumption of continuity of the two vector fields, the commutativity is a corollary of the result in \cite{Eliodiff}, that we recall here for the reader's convenience as stated in \cite{ColomboTione}, namely
\begin{theorem}[\cite{Eliodiff}]\label{thm:elio}
    Let $X\in W^{1,p}_{\text{loc}}(\R^2,\R^2)\cap L^\infty(\R^2,\R^2)$ be a continuous vector field with zero divergence. If $X\not=0$, then its RLF $\phi^X_t\in L^\infty_{\text{loc}}(\R,W^{1,p}_{\text{loc}}(\R^2,\R^2)).$
\end{theorem}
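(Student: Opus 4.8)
The plan is to reduce the global, everywhere-nonzero statement to the local ball-version already available as Theorem \ref{thm:Marconi}, via a truncation, a covering and a patching argument. Since the conclusion $\phi^X_t\in L^\infty_{\loc}(\R,W^{1,p}_{\loc}(\R^2,\R^2))$ is local in space, it suffices to fix an arbitrary ball $B\subset\R^2$ and a time $T>0$ and to prove $\sup_{|t|\le T}\|\phi^X_t\|_{W^{1,p}(B)}<\infty$. Since $-X$ is again continuous, divergence-free and nowhere vanishing, and $\phi^{-X}_t=\phi^X_{-t}$, it is enough to run the argument for $t\in(0,T]$ and then apply it verbatim to $-X$ in order to cover $t<0$.

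First I would reduce to compactly supported vector fields, as required by Theorem \ref{thm:Marconi}, without destroying the divergence-free structure. Since $|X|\le M:=\|X\|_\infty$, any trajectory issued from $B$ remains, for $|t|\le T$, inside a fixed larger ball $B_R$ with $R$ depending only on $B$, $M$ and $T$. Writing $X=\nabla^\perp H$ for a Lipschitz Hamiltonian $H$ and choosing $\eta\in C^\infty_c(\R^2)$ with $\eta\equiv1$ on $B_R$, I set $\tilde H=\eta H$ and $\tilde X=\nabla^\perp\tilde H=\eta X+H\,\nabla^\perp\eta$. Then $\tilde X$ is continuous, belongs to $W^{1,p}_{\loc}$, is divergence-free, has compact support, and coincides with $X$ on $B_R$. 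By the locality of the Regular Lagrangian Flow with respect to the vector field, $\phi^{\tilde X}_t=\phi^X_t$ a.e. on $B$ for every $t\in(0,T]$, so it is enough to prove the regularity for $\tilde X$.

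Next I would build a finite cover by balls carrying a uniform transversal direction. The closure $\overline B$ is compact and $X$ is continuous and nowhere zero on it, so for each $x_0\in\overline B$ the direction $\mathbf e=X(x_0)/|X(x_0)|$ satisfies $X\cdot\mathbf e>|X(x_0)|/2=:\delta>0$ on a small ball $B_{r}(x_0)$, by continuity; since $\tilde X=X$ there, the same holds for $\tilde X$. Extracting a finite subcover $B_1,\dots,B_m$ of $\overline B$ and applying Theorem \ref{thm:Marconi} to $\tilde X$ on each $B_i$ yields, for every $t\in(0,T]$, a representative $\phi^X_t=\phi^{\tilde X}_t\in C^0(B_i)\cap W^{1,p}(B_i)$. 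Since weak gradients are local, they agree on the overlaps $B_i\cap B_j$, so a partition-of-unity patching gives $\phi^X_t\in W^{1,p}(B)$ for each fixed $t\in(0,T]$; the same construction applied to $-\tilde X$ (whose good balls are the $B_i$ with $\mathbf e$ replaced by $-\mathbf e$) handles $t<0$.

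\textbf{The main obstacle} I expect is promoting the fixed-time conclusion to the uniform-in-time bound $\sup_{|t|\le T}\|\phi^X_t\|_{W^{1,p}(B)}<\infty$ encoded in $L^\infty_{\loc}(\R,W^{1,p}_{\loc})$, which is not literally contained in the pointwise-in-$t$ statement of Theorem \ref{thm:Marconi}. This requires the quantitative form of Marconi's estimate (the Hamiltonian/level-set analysis underlying Theorem \ref{thm:Marconi}), together with the incompressibility identity $\det D\phi^X_t=1$ and the change-of-variables bound \eqref{eq:change-of-var}, to control the growth of $\|D\phi^X_t\|_{L^p(B_i)}$ in $t$. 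A secondary technical point to verify carefully is the locality of the RLF under the truncation from $X$ to $\tilde X$, that is, that the two flows coincide a.e. on $B$ up to the exit time from $B_R$.
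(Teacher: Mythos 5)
First, a point of reference: the paper does not prove Theorem \ref{thm:elio} at all --- it is imported verbatim from \cite{Eliodiff} (in the form restated in \cite{ColomboTione}), so there is no internal proof to compare against. Your reduction of the global statement to the local ball version, Theorem \ref{thm:Marconi}, is therefore the natural thing to attempt, and its skeleton is sound: the conclusion is local in space, so fixing $B$ and $T$ suffices; the truncation $\tilde X=\nabla^\perp(\eta H)$ correctly preserves the divergence-free structure and produces a compactly supported field in $W^{1,p}$ agreeing with $X$ on a ball containing all trajectories issued from $B$ up to time $T$ (and the locality/uniqueness of the RLF then identifies the two flows a.e.\ on $B$ for $|t|\le T$); the compactness covering by balls on which $X\cdot\mathbf e>\delta$ uses exactly the continuity and nonvanishing hypotheses; and patching continuous $W^{1,p}$ representatives over a finite cover is unproblematic.

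The genuine gap is the one you flag yourself, and it is not a minor technicality: Theorem \ref{thm:Marconi}, as stated, gives for each \emph{fixed} $t$ a representative $\phi^X_t\in W^{1,p}(\Omega)$, with no control on $\|D\phi^X_t\|_{L^p(\Omega)}$ as $t$ varies, whereas the conclusion $\phi^X_\cdot\in L^\infty_{\loc}(\R,W^{1,p}_{\loc})$ is precisely the assertion that $\sup_{|t|\le T}\|\phi^X_t\|_{W^{1,p}(B)}<\infty$. This cannot be recovered from the qualitative statement by soft arguments: a Gronwall bound on $\partial_t D\phi^X_t=DX(\phi^X_t)D\phi^X_t$ fails because $DX$ is only $L^p$, and the semigroup property does not help since compositions of $W^{1,p}$ maps need not be $W^{1,p}$. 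Closing it requires the quantitative estimate inside \cite{Eliodiff} (roughly, $\|D\phi^X_t\|_{L^p}\lesssim(1+|t|)\|DX\|_{L^p}/\delta$ on the transversality balls, obtained from the level-set parametrization and incompressibility), i.e.\ one must open the black box rather than cite its qualitative output. As written, your argument proves the weaker statement ``$\phi^X_t\in W^{1,p}_{\loc}(\R^2)$ for every fixed $t$,'' which is in fact all that the present paper ever uses of Theorem \ref{thm:elio}; but it does not yet prove the theorem as stated.
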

We remark that the continuity of the vector field is necessary to ensure that $X$ is uniformly bounded from below. In the case $X\cdot Y^\perp =1$, the assumptions of the previous theorem are trivially satisfied, as condition
$X\cdot Y^\perp=1$  implies that there exists a positive constant $c>0$ such that $|X|\geq c$. Indeed, 
\begin{equation*}
    1\leq|X||Y|\quad \longrightarrow \quad |X|\geq \frac{1}{\|Y\|_\infty},
\end{equation*}
and therefore Theorem \ref{thm:elio} guarantees the differentiability of the flow $\phi^X_t$, which in turn implies the weak Lie differentiability (Definition \ref{def:Lie:derivative}).\\

According to the previous arguments, from now on we will analyze the more delicate case $X\cdot Y^\perp=0$ everywhere. In this case, thanks to the boundedness of the vector fields, we can assume without loss of generality that $X,Y$ have compact support, in order to make the analysis more transparent. 

We first recall that $S_H,S_K$ are the sets of singular values of $H,K$ respectively, while $R_H,R_K$, defined as $R_H=H(\R^2)\setminus S_H$, $R_K=K(\R^2)\setminus S_K$, are the sets of regular values. The identity $X\cdot Y^\perp=0$, at the level of Hamiltonians of the two vector fields, translates into the property that the level sets of the two Hamiltonians $H,K$ coincide. This has to be intended in the following sense: consider a simple closed curve $C_i(h)\subset \lbrace H=h\rbrace$ with $h\in \tilde E_X\subset R_H$ (recall the definition of the set $\tilde E_X$ from Corollary \ref{cor:pos:vect:level}), and assume that $Y(x)\not=0$ for $x\in \lbrace H=h\rbrace$. Then there exists $k\in \tilde E_Y$ such that $C_i(h)\subset \lbrace K=k\rbrace.$ This crucial property of the level sets of the Hamiltonians will be clarified even further in the proofs of the upcoming propositions. \\

With these considerations at hand, we aim at giving a partition of the space $\R^2$ into sets where the commutativity of flows can be proved independently and with different ideas, up to $\L^2$-negligible sets. Before starting our analysis, we recall that:
\begin{equation}\label{eq:ident}
\phi^X_t(x)=x,\quad\forall x\in H^{-1}(S_H), \quad \phi^Y_s(x)=x, \quad \forall x\in K^{-1}(S_K).
\end{equation}

Moreover, the RLFs $\phi^X_t$, $\phi^Y_s$ are integral solution of the Cauchy Problem \eqref{eq:Cauchy:RLF} for a.e. initial data $x\in A\subset\R^2$, $\L^2(A^c)=0$. For our analysis, we will limit the study to the set $A$. We recall that in this work the commutativity of flows has to be intended in the following weak sense: we say that the flows $\phi_t^X$ and $\phi_s^Y$ commute if, for every $s,t$ fixed, there exists $\Omega_{s,t}$ such that $\L^2(\Omega_{s,t}^c)=0$ and the flows commute in $\Omega_{s,t}$. However, in this section, we rely on a stronger notion of commutativity, namely we we prove that there exists a set $\Omega$ with $\L^2(\Omega ^c)=0$ such that
\begin{equation}\label{eq:strong:commutativity}
    \phi^X_t\circ\phi^Y_s=\phi^Y_s\circ\phi^X_t,\quad\forall t,s\in\R, \forall x\in \Omega.
\end{equation}
We recall the following property of the Structure Theorem \ref{thm:struct:level:sets}:
\begin{equation*}
    \exists \tilde E_X\subset H(\R^2), \text{ with } \L^1(H(\R^2)\setminus \tilde E_X)=0:\quad\forall h\in\tilde E_X, X(x)\not=0,\quad\forall x\in \lbrace H=h\rbrace.
\end{equation*}
Moreover, for such $h$, $\lbrace H=h\rbrace$ is finite union of simple closed curves $C_i(h)$, which are the connected components of the level set $\lbrace H=h\rbrace$. \\

We are now in a position to start our analysis.
We partition the space $\R^2$ as follows:
    \begin{equation*}
        \R^2=\left(\lbrace X\not=0\rbrace\cap E_X\right) \cup \left(\lbrace X\not=0\rbrace\cap E_X^c\right) \cup \lbrace X=0\rbrace,
    \end{equation*}
    where the set $E_X=H^{-1}(\tilde E_X)$. We define in the same way $\tilde E_Y$ and $E_Y$. By Coarea formula, it holds that $\L^2(\lbrace X\not =0\rbrace \cap E_X^c)=0$, since
    \begin{equation}\label{eq:coarea-form}
        \int_{\lbrace X\not =0\rbrace \cap E_X^c}|\nabla H|dx=\int_\R \H^1(\lbrace X\not =0\rbrace \cap E_X^c \cap H^{-1}(t))dt.
    \end{equation}
    The RHS is clearly zero by definition of $E_X$ and $|\nabla H|\not=0$ in the LHS since $\nabla^\perp H=X$.
Owing to this property, we give the following
\begin{corollary}
    The space $\R^2$ can be partitioned as follows
    \begin{equation}\label{eq:decom-R2}
        \left[\left(\lbrace X\not=0\rbrace \cap E_X\right)\cap \left(\lbrace Y\not=0\rbrace \cap E_Y\right)\right]\cup\lbrace X=0\rbrace \cup\lbrace Y=0\rbrace \cup N,
    \end{equation}
    where $\L^2(N)=0$ and $E_X,E_Y$ are the spaces defined above.
\end{corollary}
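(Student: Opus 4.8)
The plan is to derive the claimed covering as an elementary set-theoretic consequence of the coarea estimate \eqref{eq:coarea-form}, applied symmetrically to both vector fields, so there is essentially no analytic content beyond what has already been established.

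First I would record the two negligibility facts. Equation \eqref{eq:coarea-form} gives $\L^2(\lbrace X\not=0\rbrace\cap E_X^c)=0$; running the identical argument with $K$ in place of $H$ (recall $Y=\nabla^\perp K$, so that $|\nabla K|\not=0$ exactly on $\lbrace Y\not=0\rbrace$ and the right-hand side vanishes by definition of $E_Y=K^{-1}(\tilde E_Y)$) yields
\begin{equation*}
\L^2(\lbrace Y\not=0\rbrace\cap E_Y^c)=0.
\end{equation*}
Next I would start from the trivial covering
\begin{equation*}
\R^2=\lbrace X=0\rbrace\cup\lbrace Y=0\rbrace\cup\left(\lbrace X\not=0\rbrace\cap\lbrace Y\not=0\rbrace\right),
\end{equation*}
and refine the last piece by splitting according to membership in $E_X\cap E_Y$. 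Writing $(E_X\cap E_Y)^c=E_X^c\cup E_Y^c$, the portion of $\lbrace X\not=0\rbrace\cap\lbrace Y\not=0\rbrace$ lying outside $E_X\cap E_Y$ is contained in
\begin{equation*}
\left(\lbrace X\not=0\rbrace\cap E_X^c\right)\cup\left(\lbrace Y\not=0\rbrace\cap E_Y^c\right),
\end{equation*}
so I would set $N\doteq \lbrace X\not=0\rbrace\cap\lbrace Y\not=0\rbrace\cap(E_X^c\cup E_Y^c)$ and conclude $\L^2(N)=0$ from the two facts above. The complementary part of $\lbrace X\not=0\rbrace\cap\lbrace Y\not=0\rbrace$ is precisely $\left(\lbrace X\not=0\rbrace\cap E_X\right)\cap\left(\lbrace Y\not=0\rbrace\cap E_Y\right)$, which assembles into the decomposition \eqref{eq:decom-R2}.

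There is no genuine obstacle here; the only points requiring care are purely formal. One must invoke the coarea bound for \emph{both} $X$ and $Y$, since \eqref{eq:coarea-form} is written only for $X$, and this is legitimate because $Y$ satisfies the same standing hypotheses ($\BV_{\mathrm{loc}}\cap L^\infty$, divergence-free, compactly supported after truncation) used to obtain the structure and coarea statements. I would also note, to avoid ambiguity, that \eqref{eq:decom-R2} is a covering up to the null set $N$ rather than a disjoint partition, since $\lbrace X=0\rbrace$ and $\lbrace Y=0\rbrace$ may overlap; this is harmless for the subsequent strategy, which establishes commutativity separately on $\lbrace X=0\rbrace$, on $\lbrace Y=0\rbrace$, and on the ``good'' set where both fields are nonzero and both Hamiltonians are evaluated at regular values.
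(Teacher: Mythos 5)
Your proof is correct and follows essentially the same route as the paper, whose proof consists of the single line that the claim ``follows easily from set-theoretical considerations and \eqref{eq:coarea-form}''; you have simply written out explicitly the symmetric application of the coarea bound to $Y$ and the elementary splitting of $\lbrace X\not=0\rbrace\cap\lbrace Y\not=0\rbrace$ according to membership in $E_X\cap E_Y$. The additional remarks (that the covering need not be disjoint and that $N$ is the leftover null set) are accurate and harmless.
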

\begin{proof}
The proof follows easily from set-theoretical considerations and \eqref{eq:coarea-form}.
\end{proof}
In virtue of the previous corollary, we are left with proving separately the commutativity of the flows on the two non-negligible sets in \eqref{eq:decom-R2}, as done in the two upcoming propositions.
\begin{proposition}\label{prop:zero}
     In the set $\lbrace X=0\rbrace  \cup \lbrace Y=0\rbrace$ the flows commute, in the sense that there exists $\Omega'\subset \lbrace X=0\rbrace  \cup \lbrace Y=0\rbrace$ with $\L^2(\Omega'^c)=0$ such that
     \begin{equation*}
    \phi^X_t\circ\phi^Y_s=\phi^Y_s\circ\phi^X_t,\quad\forall t,s\in\R, \forall x\in \Omega'.
\end{equation*}
\end{proposition}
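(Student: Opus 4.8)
The plan is to reduce the whole statement to two ingredients that are already available in this setting: the identity property \eqref{eq:ident}, which pins $\phi^X_t$ (resp. $\phi^Y_s$) to the identity on the \emph{entire} singular set $H^{-1}(S_H)$ (resp. $K^{-1}(S_K)$), and a \emph{cross--preservation} statement asserting that $H$ is constant along the trajectories of $Y$ and $K$ is constant along the trajectories of $X$. It is cleanest not to use the full three--way partition but simply to treat $\{X=0\}$ and $\{Y=0\}$ separately (their overlap being handled by either), because by the very definition of the singular values $X(x)=0$ forces $H(x)\in S_H$, whence $\{X=0\}\subseteq H^{-1}(S_H)$, and symmetrically $\{Y=0\}\subseteq K^{-1}(S_K)$.

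The heart of the argument is the cross--preservation step, and here I would first record that continuity of the fields upgrades the Hamiltonians. Since $X=\nabla^\perp H$ and $Y=\nabla^\perp K$ with $X,Y\in C^0$, we have $\nabla H=(X_2,-X_1)$ and $\nabla K=(Y_2,-Y_1)$ with continuous entries, so $H,K\in C^1(\R^2)$. In the regime under study $X\cdot Y^\perp\equiv 0$ everywhere (a continuous function vanishing a.e. vanishes everywhere), and a direct computation from $X=\nabla^\perp H$, $Y=\nabla^\perp K$ gives $\nabla H\cdot Y=X\cdot Y^\perp$ and $\nabla K\cdot X=-X\cdot Y^\perp$, both of which therefore vanish identically. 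Consequently, for every $x$ in the full--measure set $A$ on which $s\mapsto\phi^Y_s(x)$ is a genuine (absolutely continuous) integral curve of $Y$, the map $s\mapsto H(\phi^Y_s(x))$ is Lipschitz with a.e. derivative $\nabla H(\phi^Y_s(x))\cdot Y(\phi^Y_s(x))=0$ by the $C^1$ chain rule, so $H(\phi^Y_s(x))=H(x)$ for all $s$; the symmetric computation yields $K(\phi^X_t(x))=K(x)$ for all $t$.

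With cross--preservation in hand the conclusion is immediate. For a.e. $x\in\{X=0\}$ we have $x\in H^{-1}(S_H)$, hence $\phi^X_t(x)=x$ by \eqref{eq:ident}; moreover $H(\phi^Y_s(x))=H(x)\in S_H$, so $\phi^Y_s(x)\in H^{-1}(S_H)$ and therefore $\phi^X_t(\phi^Y_s(x))=\phi^Y_s(x)$, again by \eqref{eq:ident}. Thus $\phi^X_t\circ\phi^Y_s(x)=\phi^Y_s(x)=\phi^Y_s\circ\phi^X_t(x)$ for all $t,s$. The same argument with the roles of $X,Y$ (and $H,K$) exchanged handles a.e. $x\in\{Y=0\}$, where both compositions instead equal $\phi^X_t(x)$. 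Taking $\Omega'$ to be $\{X=0\}\cup\{Y=0\}$ minus the union of the two exceptional null sets gives the statement.

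The main obstacle I anticipate is making the cross--preservation step rigorous at the level of the RLF rather than of classical trajectories: one must use that, for a.e. $x$, the curve $s\mapsto\phi^Y_s(x)$ is an integral curve simultaneously for all $s$ (so that a single null set is discarded, independently of $s$ and $t$), and that the chain rule may legitimately be applied along it. Both follow from the definition of the RLF as an integral solution together with the $C^1$ regularity of $H,K$; crucially, it is the fact that $X\cdot Y^\perp$ vanishes \emph{everywhere} (not merely a.e.) that removes any difficulty caused by trajectories spending time on exceptional sets. A secondary point worth verifying is that $S_H$, and hence $H^{-1}(S_H)$, is exactly the set on which \eqref{eq:ident} applies, which is guaranteed by the continuity and compact support assumptions recalled around \eqref{eq:ident}.
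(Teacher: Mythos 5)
Your route is genuinely different from the paper's and, for most of its length, cleaner. The paper never invokes a chain rule along $\phi^Y_s$; instead it works with the structure of level sets: it reduces to $x\in E_Y$ via the coarea identity \eqref{eq:coarea-form}, observes that a point $y$ on the simple closed curve $\mathcal C_k\ni x$ with $X(y)\neq 0$ must lie in the $\L^2$-negligible set $\{X\neq0\}\cap E_X^c$, and then uses continuity of $X$ to select a \emph{rational} time $q$ with $X(\phi^Y_q(x))\neq0$, so that the bad starting points are contained in $\bigcup_{q\in\Q}(\phi^Y_q)^{-1}(E)$ and are killed by incompressibility of $Y$. The conclusion the paper actually extracts is the pointwise statement $X\circ\phi^Y_s(x)=0$ for all $s$, for a.e.\ $x\in\{X=0\}$, and only \emph{then} does it conclude that $\phi^X_t$ fixes the whole $Y$-trajectory. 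Your cross-preservation step ($H,K\in C^1$ because their gradients are continuous, $\nabla H\cdot Y=X\cdot Y^\perp\equiv0$, hence $H\circ\phi^Y_s\equiv H$ along a.e.\ integral curve, with a single exceptional null set independent of $s,t$) is correct and is a nice shortcut past the level-set machinery.

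The one step that does not stand as written is the final appeal to \eqref{eq:ident} at the points $y=\phi^Y_s(x)$: from $H(y)\in S_H$ alone you cannot conclude $\phi^X_t(y)=y$. A singular level set may contain, besides the critical point that put its value into $S_H$, a regular simple closed curve on which $|X|$ is bounded below; on such a curve any representative of the RLF solving \eqref{eq:Cauchy:RLF} is a nontrivial rotation, so \eqref{eq:ident} in its blanket "$\forall x\in H^{-1}(S_H)$" form is only a convention on the Lebesgue-null set $H^{-1}(S_H)\cap\{X\neq0\}$ (null by the coarea argument, since it sits inside $\{X\neq0\}\cap E_X^c$), not a dynamical fact. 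This matters here because you must evaluate $\phi^X_t$ at the specific points $\phi^Y_s(x)$ for \emph{all} $s$ simultaneously, and a priori the trajectory could enter that null set. To close the gap you need the stronger statement that $X\circ\phi^Y_s(x)=0$ for all $s$, for a.e.\ $x\in\{X=0\}$; this is exactly what the paper proves, either by the rational-time/incompressibility argument above (the set of $x$ whose $Y$-trajectory meets $\{X\neq0\}$ is, by continuity of $X$, contained in $\bigcup_{q\in\Q}(\phi^Y_q)^{-1}\bigl(H^{-1}(S_H)\cap\{X\neq0\}\bigr)$, a countable union of null sets) or by the Gronwall estimate of Remark \ref{rmk:gronwall}. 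With that insertion your argument is complete; without it, the last displayed implication rests on an identity that is false pointwise where you apply it.
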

   
\begin{proof}
    We fix $x\in \lbrace X=0\rbrace  \cup \lbrace Y=0\rbrace$ and w.l.o.g. we can assume $X(x)=0$. If $Y(x)=0$, then the flows $\phi^X_t,\phi^Y_s$ clearly commute for every $t,s$ being the flow maps the identity. Hence, we assume that $X(x)=0$ and $Y(x)\not=0$. By the same reasoning as in \eqref{eq:coarea-form}, we first observe that it holds that $\L^2(\lbrace x: X(x)=0, Y(x)\not=0, x\in E_Y^c\rbrace )=0$, by the monotonicity of the Lebesgue measure.
    
   Thus, we are left with analyzing the case $X(x)=0, Y(x)\not=0$, and $x\in E_Y$. Since $x \in E_Y$, by definition of $E_Y$, we have that $Y(y)\not=0$ for every $y\in\lbrace K=K(x)\rbrace$. We now denote by $\mathcal{C}_k$ the connected component of $\lbrace K=K(x)\rbrace$ containing both $x$ and $y$ (we recall that $\mathcal{C}_k$ is a simple closed curve by Theorem \ref{thm:struct:level:sets}).
     If $X(y)=0$ for every $y\in \mathcal C_k$, then in particular $X\circ \phi^Y_s(x)=0$ for every $s\in \R$, thus the commutativity of flows would follow immediately. Observe that there exists $h\in H(\R^2)$ such that $\mathcal C_k\subset \lbrace H=h\rbrace,$ since $X\cdot Y^\perp=0$.
    As a consequence, if there exists $y\in \mathcal C_k$ such that $X(y)\not=0$, then it must be that $y\in \lbrace X\not=0\rbrace\cap E_X^c$ (if not, we would have $X(x)\not=0$ by definition of $E_X$). Observe that
    \begin{equation*}
        \L^2(E)=\mathcal L^2(\lbrace y \in H^{-1}(h): \exists x: X(x)=0, X(y)\not=0, \exists
        \mathcal{C}_k\quad\text{connected component: }x,y\in \mathcal{C}_k\rbrace)=0,
    \end{equation*}
    since $E$ is a subset of $\lbrace X\not=0\rbrace \cap E_X^c$, which we proved to be $\L^2$-negligible. Notice that, since $X$ is continuous and $X(y)\not=0$, for the connected component $\mathcal C_k$ satisfying the previous property, there exists $\gamma_k:[-T_k,T_k]\rightarrow \R^2$ with $\gamma_k([-T_k,T_k]) \subset \mathcal{C}_k$ and $\gamma_k(0)=y$ such that $X(\gamma_k(t))\not=0$ for all $t\in[-T_k,T_k]$. In terms of flows, this means that there exists $q\in\Q$ rational number such that $X(\phi^Y_q(x))\not=0$. Therefore
    \begin{equation*}
        \lbrace x: X(x)=0, Y(x)\not=0, x\in E_Y\rbrace \subset\cup_{q\in \Q} (\phi^Y_q)^{-1}(E).
    \end{equation*}
    Since $Y$ is incompressible and $\L^2(E)=0$, this immediately gives that $\L^2(\lbrace x: X(x)=0, Y(x)\not=0, x\in E_Y\rbrace)=0$, which concludes the proof.

\end{proof}
\begin{remark}\label{rmk:gronwall}
    Since the previous proof only relies on geometric considerations on level sets and does not use the regularity of the vector fields $X,Y$, it is in particular valid also in the $\BV$ setting. However, we remark that in the case of Sobolev regularity, the previous result is an easy corollary of Gronwall's lemma. Indeed, we have
    \begin{gather*}
        \frac{d}{ds} |X\circ\phi^Y_s|^2={2}\langle X\circ\phi^Y_s, DX(\phi^Y_s)Y\circ\phi^Y_s\rangle={2}\langle X\circ\phi^Y_s, DY(\phi^Y_s)X\circ\phi^Y_s\rangle\leq {2}|X\circ\phi^Y_s|^2|DY(\phi^Y_s)|,
    \end{gather*}
    where the second inequality follows from $[X,Y]=0$. In particular, the previous inequality and Gronwall's lemma imply that $X(\phi^Y_s(x))=0$ if $X(x)=0$ and one concludes the same result as before. We also remark that the previous inequality holds also for vector fields with bounded divergence, so that the vector field remains zero on the trajectories of the second vector field. 
\end{remark}

In order to prove Proposition \ref{thm:commutativity}, we are now left with proving the following result.
\begin{proposition}\label{prop:commu-nonzeroset}
    In the set $\left(\lbrace X\not=0\rbrace \cap E_X\right)\cap \left(\lbrace Y\not=0\rbrace \cap E_Y\right)$ the two flows commute, in the sense of \eqref{eq:strong:commutativity}.
\end{proposition}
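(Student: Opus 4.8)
The plan is to work on the set $\left(\lbrace X\not=0\rbrace \cap E_X\right)\cap \left(\lbrace Y\not=0\rbrace \cap E_Y\right)$, where by construction both vector fields are bounded away from zero on the relevant level sets, and to show that the trajectories of $X$ and $Y$ foliate the plane along the \emph{same} curves. The key structural fact is the identity $X\cdot Y^\perp=0$, which forces $X$ and $Y$ to be parallel almost everywhere; hence wherever neither vanishes, the level sets of $H$ and $K$ coincide as sets. Concretely, I would first fix $x$ in this set and let $\mathcal{C}_h=C_i(h)\subset\lbrace H=H(x)\rbrace$ be the simple closed curve through $x$ provided by Theorem \ref{thm:struct:level:sets}. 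Since $x\in E_Y$ and $Y(x)\neq 0$, the curve $\mathcal{C}_h$ is also a connected component of a level set $\lbrace K=K(x)\rbrace$, and both $X$ and $Y$ are bounded below in modulus along $\mathcal{C}_h$ by Corollary \ref{cor:pos:vect:level}.

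The core of the argument is then purely one-dimensional and geometric: I would parametrize $\mathcal{C}_h$ by arclength via $\gamma:[0,\ell]\to\R^2$ as in Theorem \ref{thm:struct:level:sets}, and use Formula \eqref{eq:ham:vect:field} to express both flows $\phi^X_t$ and $\phi^Y_s$ as reparametrizations of this \emph{single} curve. Because both $X$ and $Y$ are tangent to $\mathcal{C}_h$ and nonvanishing there, each flow advances the arclength parameter monotonically; explicitly, starting at $x=\gamma(\tau_0)$, the point $\phi^X_t(x)=\gamma(\alpha_X(t,\tau_0))$ and $\phi^Y_s(x)=\gamma(\alpha_Y(s,\tau_0))$, where $\alpha_X,\alpha_Y$ are the arclength coordinates implicitly determined by integrating $1/|X|$ and $1/|Y|$ respectively along $\gamma$. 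The decisive point is that both reparametrizations act on the \emph{same} arclength variable along the \emph{same} closed curve, so composing them reduces to composing two flows on the circle $\R/\ell\Z$ (after passing to the universal cover $\R$). Flows of a (here autonomous) one-dimensional dynamics on a fixed curve automatically commute, since the time-$t$ and time-$s$ maps are both obtained by integrating along $\gamma$ and the order of advancing by the $X$-amount and the $Y$-amount is irrelevant once both are expressed as translations of the arclength coordinate modulo $\ell$. I would make this precise by writing the composition $\phi^X_t\circ\phi^Y_s$ and $\phi^Y_s\circ\phi^X_t$ both as $\gamma$ evaluated at the arclength reached after adding the two independent increments, and observing they agree because addition of the time-increments in the integral representation \eqref{eq:ham:vect:field} is commutative.

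The two technical steps to handle carefully are: first, verifying that $x\in E_X\cap E_Y$ guarantees that the \emph{same} simple closed curve serves as a connected component for both $H^{-1}(H(x))$ and $K^{-1}(K(x))$, which follows from $X\cdot Y^\perp=0$ (parallelism of $\nabla^\perp H$ and $\nabla^\perp K$ forces the tangent lines, hence the level curves, to coincide locally, and connectedness plus the closed-curve structure upgrades this to set equality on $\mathcal{C}_h$); and second, ensuring that the arclength-parametrization of Theorem \ref{thm:struct:level:sets} is compatible with the RLF representation \eqref{eq:ham:vect:field} for \emph{both} flows simultaneously, which is where the lower bounds $|X|\geq c_h$, $|Y|\geq c_h'$ on $\mathcal{C}_h$ enter to make the integrands $1/|X|$, $1/|Y|$ bounded and the reparametrizations genuine homeomorphisms of $[0,\ell]$. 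I expect the main obstacle to be the bookkeeping of the \emph{winding number} $N\in\N$ appearing in \eqref{eq:ham:vect:field}: since the curves are closed, a flow that wraps multiple times around $\mathcal{C}_h$ before reaching its target introduces an integer multiple of the total period, and I must check that these period contributions are the same for the two orders of composition. Once the flows are identified with translations on $\R/\ell\Z$, this is automatic, but the reduction from the implicit relation \eqref{eq:ham:vect:field} to a clean translation statement is the delicate part; I would phrase it as: both $\phi^X_t$ and $\phi^Y_s$ restrict to the abelian group of rotations of the fixed circle $\mathcal{C}_h$, and rotations of a circle commute. Assembling this over all $h\in\tilde E_X\cap\tilde E_Y$ (a full-measure set of levels by the coarea argument \eqref{eq:coarea-form}) and invoking the Fubini-type slicing then yields \eqref{eq:strong:commutativity} on the whole non-negligible piece, completing the proof.
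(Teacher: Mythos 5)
Your reduction to a single simple closed curve $\mathcal C$ carrying both flows, via $X\cdot Y^\perp=0$ and the lower bounds on $|X|,|Y|$ along the common level curve, matches the paper's first step. The gap is in what you call the decisive point: it is \emph{not} true that two autonomous, nonvanishing flows tangent to the same closed curve automatically commute. The time-$t$ map of $\phi^X_t$ is a translation in the coordinate $\theta_X(\sigma)=\int_0^\sigma |X(\gamma(\sigma'))|^{-1}\,d\sigma'$, while $\phi^Y_s$ is a translation in the \emph{different} coordinate $\theta_Y(\sigma)=\int_0^\sigma |Y(\gamma(\sigma'))|^{-1}\,d\sigma'$; neither is a translation of the arclength variable, and two circle maps that are each conjugate to a rotation by different conjugacies need not commute. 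A one-dimensional counterexample to your stated principle: $\dot x=1$ and $\dot x=x$ give $\phi^f_t(x)=x+t$ and $\phi^g_s(x)=xe^s$, so $\phi^f_t\circ\phi^g_s(x)=xe^s+t\neq xe^s+te^s=\phi^g_s\circ\phi^f_t(x)$. Commutativity of the two reparametrized rotations holds precisely when $\theta_X$ and $\theta_Y$ are affinely related, i.e. when $|X|/|Y|$ is constant along $\mathcal C$ --- and this is the step your proof never establishes and where a hypothesis beyond $X\cdot Y^\perp=0$ must enter.

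The paper closes exactly this gap. Writing $X=\alpha_\epsilon Y$ on an $\epsilon$-neighbourhood of $\mathcal C$ with $\alpha_\epsilon=\frac{X\cdot Y}{|Y|^2}$ continuous, bounded below and Sobolev, the condition $\dive X=\dive(\alpha_\epsilon Y)=0$ together with $\dive Y=0$ gives $\nabla\alpha_\epsilon\cdot Y=0$ a.e., hence $\frac{d}{ds}\,\alpha_\epsilon\circ\phi^Y_s=0$ and, by continuity, $\alpha_\epsilon$ is constant along the trajectories. Only then does the reparametrization $\tau(t,y)=\alpha_\epsilon(y)\,t$ in $\phi^X_t=\phi^Y_{\tau(t,\cdot)}$ become invariant under $\phi^Y_s$, so that the semigroup property yields $\phi^X_t\circ\phi^Y_s=\phi^Y_{\tau+s}=\phi^Y_s\circ\phi^X_t$; the identification of $\tau$ with $\alpha_\epsilon t$ is then checked against Formula \eqref{eq:ham:vect:field}, which also disposes of your winding-number worry. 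That bookkeeping is secondary: the genuinely missing ingredient in your argument is the use of the divergence-free hypothesis to show that the ratio $|X|/|Y|$ does not vary along the common level curve.
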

\begin{proof}
    We fix a point $x\in \left(\lbrace X\not=0\rbrace \cap E_X\right)\cap \left(\lbrace Y\not=0\rbrace \cap E_Y\right)$, then, by definition of the two sets, there exist $c_{H(x)} >0$, $c_{K(x)} >0$ such that $|X(y)|\geq c_{H(x)}>0$ for every $y\in \lbrace H=H(x)\rbrace$ and $|Y(y)|\geq c_{K(x)} >0$ for every $y\in \lbrace K=K(x)\rbrace$. 
    In particular, there exists a simple closed curve $C$ with $\phi^X_t(x),\phi^Y_s(x)\in C$ for every $t,s\in \R$ and therefore we can restrict ourselves to study the commutativity of the flows along each curve $C$. We observe that, since the flows of the two vector fields live in the same curve, we can define a function $\tau=\tau(t,y)$ time-periodic satisfying 
    \begin{equation*}
        \phi^X_t(y)=\phi^Y_{\tau(t,y)}(y),\quad\forall y\in C.
    \end{equation*}
    In particular, 
    \begin{equation*}
        \phi^X_t\circ \phi^Y_s(y)=\phi^Y_{\tau(t,\phi^Y_s(y))}\circ \phi^Y_s(y)=\phi^Y_{\tau(t,\phi^Y_s(y))+s}(y)\quad\forall s,t\in\R,\forall y\in C,
    \end{equation*}
    while
    \begin{equation*}
        \phi^Y_s\circ\phi^X_t(y)=\phi^Y_{s+\tau(t,y)}(y)\quad\forall s,t\in\R,\forall y\in C,
    \end{equation*}
    which follow by the semigroup properties of the flows, being the two vector fields autonomous. We claim that $\tau\circ \phi^Y_s=\text{const}$, which gives a proof of the commutativity of the two flows along each curve $C$. Observe that, since $X \cdot Y^\perp=0$, in a $\epsilon$-neighbourhood $C^\epsilon$ of $C$ it holds that
    \begin{equation*}
        X(x)=\alpha_\epsilon(x) Y(x),
    \end{equation*}
    where $\alpha_\epsilon\doteq\frac{X\cdot Y}{|Y|^2}$, defined for every $x\in C_\epsilon$, is continuous, bounded and $\alpha_\epsilon\geq c>0.$ Being $X,Y\in W^{1,p}_{\text{loc}}(C^\epsilon)$, also $\alpha_\epsilon$ inherits the same regularity. In particular, since $\dive (\alpha_\epsilon Y)=\dive X=0$, we get that $\nabla\alpha_\epsilon\cdot Y=0$ a.e. and
    \begin{equation*}
        \frac{d}{ds}\alpha_\epsilon\circ \phi^Y_s(x)= (\nabla\alpha_\epsilon \cdot Y )\circ \phi^Y_s=0.
    \end{equation*}
    In particular, thanks to the continuity assumption, the map $\alpha_\epsilon$ is constant along the trajectories of the vector field $X$ (up to a set of measure zero). Then the function $\tau(t,y)=\alpha_\epsilon(y){t}$ is constant along $\phi^Y_s$ and by Formula \ref{eq:ham:vect:field} we find, for every $y\in C$,
\begin{align*}
    &t=N\int_0^{\ell(C)}\frac{1}{|X(\gamma(s))|}ds+\int_{\gamma^{-1}(y)}^{\bar\tau} \frac{1}{|X(\gamma(s))|}ds\\ &=\frac{1}{\alpha_\epsilon}\left({N}\int_0^{\ell(C)}\frac{1}{|Y(\gamma(s))|}ds+\int_{\gamma^{-1}(y)}^{\bar\tau }\frac{1}{|Y(\gamma(s))|}ds\right)=\tau(t,y),
\end{align*}
where $\gamma(s)$ is the Lipschitz parametrization of the curve $C$ given by Theorem \ref{thm:struct:level:sets} and $\gamma(\bar\tau)=\phi^X_t(y)=\phi^Y_{\tau(t,y)}(y)$.
\end{proof}

\section{Nearly incompressible vector fields}\label{Sect:nearly:incompr}
Here we extend the ideas of the previous section to the more general setting of continuous vector fields $X,Y\in W^{1,p}(\R^2,\R^2)$ with bounded divergence. We recall that, thanks to the boundedness of the divergence, we can assume that the two vector fields $X,Y$ are nearly incompressible (see, for instance, \cite{BianchiniBonicatto}) in the sense of Definition \ref{def:nearly:incomp}. 
 We rewrite the equation defining the Hamiltonian vector field $W$, which follows by Equation \eqref{eq:Lie:bracket} with $[X,Y]=0$:
\begin{equation*}
    W\doteq \dive Y X-\dive X Y \in L^\infty.
\end{equation*}
Clearly, it follows immediately from \eqref{eq:Lie:bracket} that $W$ is a Hamiltonian vector field, with Hamiltonian given by the function
\begin{equation*}
    H_{W}=X\cdot Y^\perp.
\end{equation*}
In particular, being $W\in L^\infty$, the scalar product $X\cdot Y^\perp$ has a Lipschitz continuous representative. 
We start with the following
\begin{proposition}
    Let $X,Y\in W^{1,p}(\R^2,\R^2)\cap L^\infty(\R^2,\R^2)$ with bounded divergence such that $[X,Y]=0$. Then 
    \begin{equation}\label{eq:riccardo}
        X\cdot Y^\perp\circ\phi^X_t= \xi(t) X\cdot Y^\perp,
    \end{equation}
    (and the same for the flow $\phi^Y_s$), where $\xi$ denotes the density of $(\phi^X_t)_\sharp \L^2$ w.r.t. the Lebesgue measure (see Subsection \ref{sect:RLF}). In particular, if $X\cdot Y^\perp\geq 0$, then we have
    \begin{equation}\label{Eq:inequality}
        \frac{1}{C}X\cdot Y^\perp\leq X\cdot Y^\perp\circ\phi^X_t\leq C X\cdot Y^\perp\quad \text{a.e.,}
    \end{equation}
   where $C$ is the constant in \eqref{eq:limitedness}.
\end{proposition}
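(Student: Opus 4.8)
The plan is to convert the hypothesis $[X,Y]=0$ into a pointwise (a.e.) transport identity for the scalar $g:=X\cdot Y^\perp$, and then to integrate that identity along the trajectories of $\phi^X_t$, comparing the resulting linear ODE with the Liouville equation \eqref{eq:Liouville} satisfied by the density $\xi$. First I would record the algebraic consequences of the vanishing bracket. By Lemma \ref{lem:eq:for:bracket} (that is, \eqref{eq:Lie:bracket}) the hypothesis reads
\begin{equation*}
\nabla^\perp g = \dive Y\, X - \dive X\, Y =: W \qquad \text{distributionally}.
\end{equation*}
Since the right-hand side lies in $L^\infty$, the field $g$ admits a Lipschitz representative with $\nabla g\in L^\infty$. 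Writing $\nabla g=(\partial_1 g,\partial_2 g)=(W_2,-W_1)$ and contracting with $X$, the terms proportional to $\dive Y$ cancel (because $X^\perp\cdot X=0$), leaving the a.e. identity
\begin{equation*}
X\cdot\nabla g = g\,\dive X,
\end{equation*}
which is the weak analogue of the classical transport equation for $g$ along $X$.

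Next I would integrate this identity along the flow. For a.e. $x$ the curve $t\mapsto\phi^X_t(x)$ is Lipschitz with velocity $X\circ\phi^X_t$, and $g$ is Lipschitz, so $t\mapsto g(\phi^X_t(x))$ is absolutely continuous. The chain rule gives $\frac{d}{dt}g(\phi^X_t(x))=\nabla g(\phi^X_t(x))\cdot X(\phi^X_t(x))$ at a.e. $t$, provided the trajectory spends zero time in the $\L^2$-null set $\Sigma$ where $g$ fails to be differentiable or where the pointwise identity above fails. This is guaranteed by a Fubini argument combined with the bounded compressibility of the RLF (Condition 2 of Definition \ref{def:RLF}): for each fixed $t$ the set $\{x:\phi^X_t(x)\in\Sigma\}$ is $\L^2$-null, hence $\{(t,x):\phi^X_t(x)\in\Sigma\}$ is null and a.e. trajectory meets $\Sigma$ only on a null set of times. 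Substituting the transport identity yields, for a.e. $x$,
\begin{equation*}
\frac{d}{dt}\,g(\phi^X_t(x)) = (\dive X)(\phi^X_t(x))\,g(\phi^X_t(x)),\qquad g(\phi^X_0(x))=g(x).
\end{equation*}

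Finally I would compare with Liouville. By \eqref{eq:Liouville} the density $\xi(t,x)$ solves the same linear ODE $\partial_t\xi=(\dive X\circ\phi^X_t)\,\xi$ with initial datum $\xi(0,\cdot)=1$. Since $t\mapsto g(\phi^X_t(x))$ and $t\mapsto\xi(t,x)$ satisfy the same linear equation with coefficient $(\dive X)\circ\phi^X_t\in L^\infty$, uniqueness for linear ODEs (Gronwall) forces $g(\phi^X_t(x))=\xi(t,x)\,g(x)$ for a.e. $x$, which is precisely \eqref{eq:riccardo}; the computation $Y\cdot\nabla g=g\,\dive Y$ is entirely analogous and gives the statement for $\phi^Y_s$. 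The two-sided bound \eqref{Eq:inequality} then follows immediately, under $X\cdot Y^\perp\geq 0$, by multiplying through with the bounds $C^{-1}\le\xi\le C$ of \eqref{eq:limitedness}. The step I expect to be the main obstacle is exactly the chain-rule-along-the-flow argument: differentiating the merely Lipschitz (not $C^1$) function $g$ along the RLF trajectories, which is where incompressibility is essential, together with upgrading the distributional Liouville equation to a pointwise-in-time ODE for a.e. $x$ so that the ODE-uniqueness comparison is legitimate.
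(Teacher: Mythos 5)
Your proposal is correct and follows essentially the same route as the paper: both derive the a.e. transport identity $X\cdot\nabla(X\cdot Y^\perp)=(X\cdot Y^\perp)\,\dive X$ from Equation \eqref{eq:Lie:bracket} (using $X\cdot X^\perp=0$), then observe that $X\cdot Y^\perp\circ\phi^X_t$ and $\xi(t)\,X\cdot Y^\perp$ satisfy the same linear ODE with the same initial datum, the bound \eqref{Eq:inequality} following from \eqref{eq:limitedness}. Your additional Fubini-plus-compressibility justification of the chain rule along RLF trajectories for the merely Lipschitz Hamiltonian is a point the paper's proof leaves implicit, so it is a welcome refinement rather than a deviation.
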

\begin{proof}
We derive the function $X\cdot Y^\perp\circ\phi^X_t$ and we obtain that
    \begin{equation*}
        \partial_t (X\cdot Y^\perp\circ\phi^X_t)=(\nabla (X\cdot Y^\perp)\cdot X )\circ\phi^X_t=(\nabla^\perp (X\cdot Y^\perp)\cdot X^\perp )\circ\phi^X_t,
    \end{equation*}
    where in the last equality we have simply exploited the definition of $\nabla^\perp$ and $X^\perp$. We now observe that, since $[X,Y]=0$, Equation \eqref{eq:Lie:bracket} yields $\nabla^\perp (X\cdot Y^\perp) =\dive Y X- \dive X Y$ distributionally and therefore
    \begin{equation*}
        (\nabla^\perp (X\cdot Y^\perp)\cdot X^\perp )\circ\phi^X_t=\dive X \circ\phi^X_t (X\cdot Y^\perp) \circ\phi^X_t
    \end{equation*}
    where we also have used the fact that $X \cdot X^\perp =0$. Combining the two previous equalities, we infer
    \begin{equation*}
        \partial_t \left( X\cdot Y^\perp\circ\phi^X_t\right)=\dive X \circ\phi^X_t (X\cdot Y^\perp) \circ\phi^X_t
    \end{equation*}
    On the other hand, by Liouville Theorem (see the discussion in the Preliminaries \ref{S:preliminaries} and in particular Equation \eqref{eq:Liouville}) there holds
    \begin{equation*}
    \label{eq:Liouville2}
    \partial_t \xi=\dive(X)\circ\phi^X_t\xi.
\end{equation*}
    Hence, Equality \eqref{eq:riccardo} follows, since the two functions $X\cdot Y^\perp\circ\phi^X_t$ and $\xi(t) X\cdot Y^\perp$ satisfy the same ODE (with the same initial data). Inequality \eqref{Eq:inequality} is then an immediate consequence of the boundedness of $\xi$ (see in particular inequality \eqref{eq:limitedness}).
\end{proof}
\begin{remark}
    We point out that these computations hold also in the $\BV$ setting.
\end{remark}
For every $\kappa \in \mathbb{R}\setminus \lbrace 0 \rbrace$, we define the sets $\Omega_\kappa=H_W^{-1}\left({C}^{-1}\kappa, C\kappa\right),$ if $\kappa>0$ and 
$\Omega_\kappa=H_W^{-1}\left({C}\kappa, C^{-1}\kappa\right),$ if $\kappa<0$. Clearly, $\Omega_\kappa$ is the union of countably many connected components. In the sequel, when we will refer to $\Omega_\kappa$, for simplicity we will refer to each single connected component. By inequality \eqref{Eq:inequality}, we have that, if $X\cdot Y^\perp(x)=\kappa$, then $\phi^X_t(x)\in \overline{\Omega}_\kappa$.

We are now in a position to prove the main proposition of this section, that is 
\begin{proposition}\label{prop:steady}
    Let $X,Y\in L^\infty(\R^2,\R^2)\cap W^{1,p}(\R^2,\R^2)$ with bounded divergence such that $[X,Y]=0$. Then for every $\kappa\in \R\setminus \lbrace{0}\rbrace$, the vector fields $X\llcorner_{\Omega_\kappa},Y\llcorner_{\Omega_\kappa},$ are steady nearly incompressible. Moreover, they have the same Lipschitz and bounded density $\rho_\kappa:\Omega_\kappa\rightarrow\R$ given by the function
    \begin{equation*}
        \rho_\kappa=\frac{1}{H_W},
    \end{equation*}
    that is 
    \begin{equation*}
        \dive(\rho_\kappa (X-Y))=0, \quad\text{in }\mathcal D'(\Omega_\kappa).
    \end{equation*}
\end{proposition}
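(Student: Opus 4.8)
The plan is to verify directly that the explicit candidate $\rho_\kappa=1/H_W$ is an admissible steady density for both $X$ and $Y$ on (each connected component of) $\Omega_\kappa$, rather than producing an abstract density. The first step is to check that $\rho_\kappa$ is well defined with the right regularity. Since $[X,Y]=0$, Lemma \ref{lem:eq:for:bracket} gives $\nabla^\perp H_W = W = \dive Y\,X - \dive X\,Y \in L^\infty$, so $H_W$ has a representative in $W^{1,\infty}(\R^2)$, i.e. it is globally Lipschitz, and its a.e.\ gradient equals $\nabla H_W = -W^\perp$. On a component of $\Omega_\kappa = H_W^{-1}(C^{-1}\kappa, C\kappa)$ (for $\kappa>0$; analogously for $\kappa<0$) the function $H_W$ has constant sign and is bounded away from zero, so $\rho_\kappa=1/H_W$ is Lipschitz, bounded, with $\tfrac{1}{C|\kappa|}\le |\rho_\kappa|\le \tfrac{C}{|\kappa|}$ — which supplies the two-sided bound required in Definition \ref{def:steady} — and $\nabla\rho_\kappa = -\nabla H_W/H_W^2 = W^\perp/H_W^2 \in L^\infty(\Omega_\kappa)$.

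The second step is the pointwise algebraic identity linking $\nabla H_W$ to the divergences. Writing $\nabla H_W = -W^\perp = -(\dive Y)\,X^\perp + (\dive X)\,Y^\perp$ and using the elementary relations $X^\perp\cdot X = Y^\perp\cdot Y = 0$, $Y^\perp\cdot X = H_W$ and $X^\perp\cdot Y = -H_W$, I would obtain, a.e.\ on $\Omega_\kappa$,
\begin{equation*}
  \nabla H_W\cdot X = (\dive X)\,H_W, \qquad \nabla H_W\cdot Y = (\dive Y)\,H_W.
\end{equation*}
This is the only place where the hypothesis $[X,Y]=0$ is used, through the identification $\nabla^\perp H_W = W$.

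The third step is to combine these with the Leibniz rule. Since $\rho_\kappa\in W^{1,\infty}_{\loc}(\Omega_\kappa)$ and $X\in W^{1,p}(\Omega_\kappa)$ with $\dive X\in L^\infty$, the product $\rho_\kappa X$ lies in $W^{1,p}_{\loc}(\Omega_\kappa)$ and $\dive(\rho_\kappa X) = \rho_\kappa \dive X + \nabla\rho_\kappa\cdot X$ holds a.e. Substituting $\nabla\rho_\kappa\cdot X = -(\nabla H_W\cdot X)/H_W^2 = -(\dive X)/H_W = -\rho_\kappa\dive X$ gives $\dive(\rho_\kappa X)=0$ a.e., hence in $\mathcal D'(\Omega_\kappa)$; the identical computation for $Y$ yields $\dive(\rho_\kappa Y)=0$. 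This shows that $X\rest_{\Omega_\kappa}$ and $Y\rest_{\Omega_\kappa}$ are steady nearly incompressible with the common density $\rho_\kappa$, and subtracting the two identities gives $\dive(\rho_\kappa(X-Y))=0$ in $\mathcal D'(\Omega_\kappa)$.

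The main obstacle is technical rather than conceptual: one must ensure that every product above is legitimate distributionally. The two delicate points are (i) that $\nabla^\perp H_W = W$ holds only distributionally a priori, so one needs $H_W$ to be genuinely Lipschitz (which is exactly what $W\in L^\infty$ provides) before reading $\nabla H_W = -W^\perp$ as an a.e.\ $L^\infty$ identity; and (ii) the justification of the Leibniz rule for $\rho_\kappa X$, which relies on $\rho_\kappa$ Lipschitz and $X$ Sobolev with bounded divergence. Both are standard once $H_W$ is known to be bounded away from zero on $\Omega_\kappa$, which is precisely the reason for decomposing the plane into the level regions $\Omega_\kappa$.
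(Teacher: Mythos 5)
Your proposal is correct and follows essentially the same route as the paper: both arguments rest on the identity $\nabla^\perp(X\cdot Y^\perp)=\dive Y\,X-\dive X\,Y$ from Lemma \ref{lem:eq:for:bracket}, contract it with $X$ and $Y$ (you via $\nabla H_W\cdot X=(\dive X)H_W$, the paper via the equivalent $\nabla^\perp H_W\cdot X^\perp$), and conclude by the Leibniz rule for the Lipschitz density $\rho_\kappa=1/H_W$ times a Sobolev field. Your added remarks on why $H_W$ is genuinely Lipschitz and on the validity of the product rule only make explicit what the paper leaves implicit.
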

\begin{proof}
Since we are in the set $\Omega_\kappa$, the function $\rho_\kappa$ of the statement is well-defined, bounded (from above and below) and Lipschitz, being $X\cdot Y^\perp$ Lipschitz. In particular, by Equation \eqref{eq:Lie:bracket} with $[X,Y]=0$, it holds
\begin{equation*}
    \nabla^\perp(X\cdot Y^\perp)\cdot Y^\perp=\dive Y (X\cdot Y^\perp), 
\end{equation*}
and therefore
\begin{align*}
    \dive (\rho_\kappa Y)&=\dive\left(\frac{1}{H_W} Y\right)\\&=
    \frac{1}{H_W}\dive Y-\frac{1}{H_W^2}\nabla\left(X \cdot Y^\perp\right)\cdot Y\\&=
    \frac{1}{H_W}\dive Y-\frac{1}{H_W^2}\nabla^\perp\left(X \cdot Y^\perp\right)\cdot Y^\perp\\&=
    \frac{1}{H_W}\dive Y-\frac{1}{H_W^2}\dive Y H_W=0.
\end{align*}
The same computation holds for the vector field $X$, being
\begin{equation*}
    \nabla^\perp(X\cdot Y^\perp)\cdot X^\perp=-\dive X (Y\cdot X^\perp)=\dive X (X\cdot Y^\perp).
\end{equation*}
\end{proof}
We define now the set
\begin{equation*}
    \Omega=\cup_{\kappa\in \Q\setminus \lbrace 0\rbrace} \Omega_\kappa,
\end{equation*}
which is an open set such that $\Omega^c=\lbrace H_W=0\rbrace$. Observe that, thanks again to inequality \eqref{Eq:inequality}, if $x\in \Omega$, then $\phi^X_t(x)\in \Omega$ (and the same for the flow of the vector field $Y$), so that the set $\Omega$ is an invariant set for the two flows.

Taking advantage of the previous results, we can easily adapt the ideas of \cite{Eliodiff} to prove the following
\begin{proposition}
    Let $X,Y$ be vector fields satisfying the hypotheses of the two previous propositions. Moreover, we assume that they are continuous. Then, for every $t,s$ the RLFs of $X,Y$ have representative $\phi^X_t,\phi^Y_s\in W^{1,p}(\Omega)\cap \mathcal C^0(\Omega).$ 
\end{proposition}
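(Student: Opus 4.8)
The plan is to localize to the regions $\Omega_\kappa$ on which $X$ and $Y$ are steady nearly incompressible, to replace each field there by an honestly divergence-free one through the density $\rho_\kappa$, to invoke the regularity theory of \cite{Eliodiff}, and finally to transfer the regularity back across the time reparametrization linking the two fields. Since Sobolev regularity is a local property and $\Omega=\cup_{\kappa\in\Q\setminus\{0\}}\Omega_\kappa$, it suffices to argue on a neighborhood of each point $x_0\in\Omega$. Fix such an $x_0$ and set $\mu=H_W(x_0)\ne0$; by inequality \eqref{Eq:inequality} the whole trajectory of $x_0$ stays in $\overline{\Omega}_\mu=H_W^{-1}([C^{-1}\mu,C\mu])$, and the trajectories of all nearby points stay in a fixed open set $U\subset\Omega$ of the form $\{a<|H_W|<b\}$ with $0<a<b<\infty$. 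On $U$ the function $\rho\doteq1/H_W$ is Lipschitz and satisfies $0<c\le\rho\le C$.

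On $U$, the computation in the proof of Proposition \ref{prop:steady} (which only uses $[X,Y]=0$ and $H_W\ne0$) gives $\dive(\rho X)=0$. Hence $V\doteq\rho X$ is continuous, bounded, of class $W^{1,p}$ (it is the product of $X\in W^{1,p}\cap\mathcal{C}^0$ with the Lipschitz, bounded factor $\rho$), divergence-free, and nowhere vanishing on $U$ (because $H_W\ne0$ forces $X\ne0$ there, and $\rho>0$). By continuity and non-vanishing of $V$, each point of $U$ has a ball on which $V\cdot\mathbf e>\delta$ for suitable $\mathbf e\in\mathbb S^1$ and $\delta>0$; on such a ball Theorem \ref{thm:Marconi} (the local form of the regularity result of \cite{Eliodiff}, cf. Theorem \ref{thm:elio}) yields that the RLF $\phi^V_\sigma$ admits a representative in $\mathcal{C}^0\cap W^{1,p}$. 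Patching over a locally finite cover gives $\phi^V_\sigma\in\mathcal{C}^0(U)\cap W^{1,p}(U)$, and the same for $\rho Y$.

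It remains to pass from $\phi^V$ to $\phi^X$. Since $V=\rho X$ with $\rho>0$, both fields have the same oriented trajectories — the level curves of the local Hamiltonian $G$ with $V=\nabla^\perp G$, whose structure is given by Theorem \ref{thm:struct:level:sets} — and differ only by a time reparametrization. Writing $\phi^X_t(x)=\phi^V_{\tau(t,x)}(x)$ and matching the two ODEs yields $t=\int_0^{\tau}\rho(\phi^V_\sigma(x))\,d\sigma$; as $\rho\ge c>0$ this is strictly increasing in $\tau$, so it determines $\tau(t,\cdot)$ implicitly. Because $\rho$ is Lipschitz and $\sigma\mapsto\phi^V_\sigma$ is Lipschitz in time and $W^{1,p}$ in space, the map $x\mapsto\tau(t,x)$ is continuous and $W^{1,p}$, and then $\phi^X_t(x)=\phi^V_{\tau(t,x)}(x)$ belongs to $\mathcal{C}^0\cap W^{1,p}$ by the chain rule. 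Running the same argument with $\rho Y$ handles $\phi^Y_s$, and assembling over all $x_0$ gives $\phi^X_t,\phi^Y_s\in\mathcal{C}^0(\Omega)\cap W^{1,p}(\Omega)$.

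The main obstacle is precisely this last transfer of regularity. The delicate point is the Sobolev chain rule for the time-changed composition $\phi^V_{\tau(t,x)}(x)$, in which the time at which the Sobolev map $\phi^V$ is evaluated itself depends on the spatial variable: one must justify $\nabla_x\phi^X_t=(\nabla_x\phi^V_\sigma)|_{\sigma=\tau}+(\partial_\sigma\phi^V_\sigma)|_{\sigma=\tau}\otimes\nabla_x\tau$ in $L^p$, using the joint regularity of $\phi^V$ (Lipschitz in $\sigma$, $W^{1,p}$ in $x$), the boundedness of $\partial_\sigma\phi^V=V\circ\phi^V$, and the fact that $\tau(t,\cdot)$ is a Sobolev map whose time-derivative is bounded below, so that the pre-composition preserves integrability. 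A secondary, more routine, difficulty is that Theorem \ref{thm:Marconi} is stated for globally defined, compactly supported divergence-free fields, so its use on $U$ must go through the local ball-by-ball formulation rather than a global extension of $V$, which is not available since $\rho$ degenerates as $H_W\to0$.
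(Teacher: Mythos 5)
Your proposal follows essentially the same route as the paper: localize to the invariant sets $\Omega_\kappa$, use the density $\rho_\kappa=1/H_W$ from Proposition \ref{prop:steady} to produce the divergence-free, non-vanishing field $\rho_\kappa X$, apply the local regularity result of \cite{Eliodiff} on a ball cover, and then recover $\phi^X_t=\phi^{\rho_\kappa X}_{\tau(t,x)}$ through the time reparametrization $\tau$ solving $\frac{d\tau}{dt}=H_W(\phi^{\rho_\kappa X}_{\tau})$. You have also correctly identified the one genuinely delicate step — the Sobolev chain rule for the composition $\phi^{V}_{\tau(t,x)}(x)$ with a space-dependent evaluation time — which the paper likewise addresses only briefly via the uniform Lipschitz-in-time and $W^{1,p}$-in-space regularity of $X\cdot Y^\perp\circ\phi^{\kappa}$.
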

\begin{proof}
    We fix $\kappa\in\R\setminus\lbrace 0\rbrace$ and we consider $\rho_\kappa$ as in the statement of Proposition \ref{prop:steady}. Then, as $\rho_\kappa$ is Lipschitz and bounded, the vector field $\rho_\kappa X$ belongs to the space $W^{1,p}_{\text{loc}}(\R^2,\R^2)$ and is continuous on the set $\Omega_\kappa$. By Diperna-Lions Theory there exists a unique RLF 
    \begin{equation*}
        \phi^\kappa_t\doteq \phi_t^{\rho_\kappa X}
    \end{equation*}
    solving the Cauchy Problem
    \begin{equation*}
        \begin{cases}
            \partial_t \phi^\kappa_t(x)=(\rho_\kappa X)(\phi^\kappa_t(x)),\\
            \phi^\kappa_{t=0}(x)=x.
        \end{cases}
    \end{equation*}
    Note that the trajectories of the vector field $\rho_\kappa X$ live in the set $\Omega$, being this vector field Hamiltonian on $\Omega_\kappa$. By \cite{Eliodiff}, $\phi^\kappa_t\in W^{1,p}_{\text{loc}}(\Omega_\kappa)$. Indeed, we first consider a countable cover of $\Omega_\kappa$ by open balls, namely $\Omega_\kappa\subset\cup_j B_{r_j}(x_j)$, where $r_j>0$ and $x_j\in \Omega_\kappa$. The countable cover can be chosen in such a way that $X\cdot Y^\perp \in (C^{-1}\kappa-\delta,C\kappa+\delta),$ for some $\delta>0$ sufficiently small, thanks to the continuity of the two vector fields. Finally, we denote by $\Omega^\delta_\kappa$ the set $H_W^{-1}((C^{-1}\kappa-\delta,C\kappa+\delta))$. To prove the assertion we just need to show that $\phi^\kappa_t\in W^{1,p}(B_{r_j}(x_j))$. 
    This clearly holds true, since in each ball $B_{r_j}(x_j)$ there exists a Hamiltonian $H_{j}$ such that $\rho_\kappa X=\nabla^\perp H_j$. We then denote $\tilde \Omega_n= H_\kappa^{-1}(\lbrace h: \min_{H_j^{-1}(h)} |\rho_\kappa X|>\frac{1}{n}\rbrace).$  In particular, since $Y$ is bounded, we have
    \begin{equation*}
        \bar {B}_{r_j}(x_j)\subset\tilde\Omega_n.
    \end{equation*}
    The sets $\tilde \Omega_n$ are those defined in \cite{Eliodiff} (Section 3), where the flow has been proved to be Sobolev and therefore we have proved that $\phi^\kappa_t\in W^{1,p}(B_{r_j}(x_j))$. 
    
    We now claim that, if we define $\tau(t,x)$ as the solution of the following ODE system
    \begin{equation}\label{eq:sistema:ODE}
    \begin{cases}
        \frac{d\tau(t,x)}{dt}=\frac{1}{\rho_\kappa(\phi^k_{\tau(t,x)}(x))}=X\cdot Y^\perp(\phi^k_{\tau(t,x)}(x)), \\
        \tau(0,x)=0,
        \end{cases}
    \end{equation}
   for every $x \in \Omega_\kappa$, then there holds
    \begin{equation}\label{eq:par-phi}
        \phi^X_t=\phi^\kappa_{\tau(t,x)},
        \end{equation}
        (where $\phi^X_t$ is the unique RLF of $X$).
    Indeed, deriving the right hand side of \eqref{eq:par-phi} and employing \eqref{eq:sistema:ODE}, we obtain
    \begin{equation*}
        \frac{d}{dt} \phi^\kappa_{\tau(t,x)}(x)=\rho_\kappa (\phi^\kappa_{\tau(t,x)}(x)) X(\phi^\kappa_{\tau(t,x)}(x))\frac{1}{\rho_\kappa(\phi^\kappa_{\tau(t,x)}(x))},
    \end{equation*}
     and therefore \eqref{eq:par-phi} immediately follows by uniqueness of the RLF. Notice that $X\cdot Y^\perp\in \text{Lip}(\R^2,\R)\cap L^\infty(\R^2,\R).$ Moreover, $\phi^\kappa_t\in W^{1,p}_{\text{loc}}(\Omega_\kappa)$ for all $t$.

     This in particular implies that $\phi^X_t\in W^{1,p}_{\text{loc}}(\Omega_\kappa),$ and thus $\phi^X_t\in W^{1,p}_{\text{loc}}(\Omega),$ once the function $\tau$ is proved to exist and have Sobolev regularity. To prove this last statement, we observe that the function $t\rightarrow X\cdot Y^\perp \circ \phi^\kappa (t,\cdot)$ is Lipschitz uniformly w.r.t. $x\in\Omega_\kappa$ and it is bounded, giving global existence and uniqueness. Moreover, $x\rightarrow X\cdot Y^\perp \circ \phi^\kappa (t,x)$ is in $W^{1,p}_{\text{loc}}(\Omega_\kappa)$ for every time $t$ . This also gives the Sobolev regularity of the function $\tau$. 
   
    Since $\phi^X_t\in W^{1,p}_{\text{loc}}(\Omega_\kappa)$ for every $\kappa$, we conclude that $\phi^X_t\in W^{1,p}_{\text{loc}}(\Omega).$ 
\end{proof}

We are finally ready to prove our main theorem:
\begin{theorem}\label{thm:main:n:i}
    Let $X,Y\in W^{1,p}(\R^2,\R^2)\cap C^0(\R^2,\R^2)$ be two bounded vector fields with $\dive X,\dive Y \in L^\infty(\R^2,\R^2).$ Then it holds
\begin{equation*} \qquad [X,Y]=0 \iff \phi_t^X \circ \phi_s^Y=\phi_s^Y \circ \phi_t^X , \quad \forall t,s \in \R,\quad\text{for a.e. in }\R^2.
    \end{equation*}
\end{theorem}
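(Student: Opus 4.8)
The plan is to prove the two implications separately, the reverse one being essentially already available. For ``commutativity $\Rightarrow[X,Y]=0$'' I would simply invoke Proposition \ref{thm:comm-BV} (valid in the larger $\BV$ class, hence a fortiori for $W^{1,p}\cap C^0$ fields), equivalently the forward direction of \cite[Theorem 1.1]{ColomboTione}; this requires no geometric input. The content of the theorem is thus the implication $[X,Y]=0\Rightarrow$ commutativity, which I would attack by splitting $\R^2$ through the Lipschitz function $H_W=X\cdot Y^\perp$ into the open set $\Omega=\{H_W\neq 0\}=\cup_{\kappa\in\Q\setminus\{0\}}\Omega_\kappa$ and its complement $\{H_W=0\}$. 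The key preliminary observation is that both sets are invariant under $\phi^X_t$ and $\phi^Y_s$: by \eqref{eq:riccardo}--\eqref{Eq:inequality} the quantity $X\cdot Y^\perp$ is only multiplied along each flow by the two-sided bounded Jacobian density, so trajectories issuing from $\Omega_\kappa$ remain in $\overline\Omega_\kappa$ and those issuing from $\{H_W=0\}$ keep $H_W=0$.

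On $\Omega$ I would invoke the proposition established immediately above, which --- building on Proposition \ref{prop:steady} and the regularity theory of \cite{Eliodiff} --- gives $\phi^X_t,\phi^Y_s\in W^{1,p}_{\loc}(\Omega)\cap C^0(\Omega)$ for every $t,s$. In particular both flows are weakly Lie differentiable on $\Omega$, which is precisely the extra hypothesis needed to run the equivalence of \cite[Theorem 1.1]{ColomboTione} (see also \cite[Corollary 4.2]{ColomboTione}). Since $\Omega$ is open and invariant under the two flows, localizing that result to $\Omega$ yields $\phi^X_t\circ\phi^Y_s=\phi^Y_s\circ\phi^X_t$ for a.e. point of $\Omega$ and all $t,s$.

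On $\{H_W=0\}$ the two fields are parallel, and I would reproduce the geometric strategy of Section \ref{Sect:hamiltonians}. As $H_W$ is Lipschitz and vanishes on this set, $\nabla H_W=0$ a.e. there, so by \eqref{eq:Lie:bracket} the field $W=\dive Y\,X-\dive X\,Y=\nabla^\perp H_W$ vanishes a.e. as well. Up to an $\L^2$-null set I split $\{H_W=0\}$ into $\{X=0\}\cup\{Y=0\}$ and the set where both fields are nonzero, the splitting being justified by the coarea computation behind \eqref{eq:decom-R2}. On $\{X=0\}\cup\{Y=0\}$ the Gronwall estimate of Remark \ref{rmk:gronwall}, explicitly valid for bounded divergence, shows that a field vanishing at a point stays zero along the trajectories of the other, whence commutativity as in Proposition \ref{prop:zero}. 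Where both fields are nonzero I write $X=\alpha Y$ with $\alpha=(X\cdot Y)/|Y|^2\in W^{1,p}_{\loc}$; from $W=0$ and $Y\neq 0$ one gets $\alpha\,\dive Y=\dive X$, hence $\nabla\alpha\cdot Y=0$, i.e. $\alpha$ is transported by $\phi^Y_s$. This is exactly the configuration of Proposition \ref{prop:commu-nonzeroset}, so the reparametrization $\tau(t,y)=\alpha(y)t$ should give $\phi^X_t\circ\phi^Y_s=\phi^Y_s\circ\phi^X_t$ along each trajectory.

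I expect the main obstacle to be this last region $\{H_W=0\}$: because $\dive X,\dive Y$ do not vanish, neither $X$ nor $Y$ is $\nabla^\perp$ of a Lipschitz potential, so the decomposition of level sets into finitely many simple closed curves and the arclength parametrization \eqref{eq:ham:vect:field} underpinning Proposition \ref{prop:commu-nonzeroset} are not directly available here. The crux is to reinstate this structure on the connected components of $\{H_W=0\}$ --- for instance by producing a bounded, bounded-below steady density for $Y$ there, turning $Y$ into a locally Hamiltonian field with the same trajectories so that Theorem \ref{thm:struct:level:sets} applies --- while tracking the associated $\L^2$-negligible exceptional sets. Once commutativity is secured a.e. on $\Omega$ and a.e. on $\{H_W=0\}$, intersecting the two full-measure sets gives the claim on a set of full Lebesgue measure in $\R^2$ for all $t,s$, which closes the proof.
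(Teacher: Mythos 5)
Your plan follows the paper's proof essentially step by step: the reverse implication via Proposition \ref{thm:comm-BV}, the splitting of $\R^2$ by $H_W=X\cdot Y^\perp$ into the invariant open set $\Omega$ (where Sobolev regularity of the flows from the preceding proposition feeds into \cite[Theorem 1.1]{ColomboTione}) and $\{H_W=0\}$ (where $W=0$ a.e. by locality of the Sobolev gradient, the fields are parallel, Gronwall handles the zero set, and a transported ratio $\alpha$ handles the rest). The one point where you diverge is the obstacle you flag at the end: the paper does not need to reinstate the level-set decomposition of Theorem \ref{thm:struct:level:sets} or the arclength formula \eqref{eq:ham:vect:field} on $\{H_W=0\}$. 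Instead, once $\alpha$ is known to be constant along trajectories, it rewrites the integral equation $\phi^X_t(x)=x+\int_0^t X(\phi^X_\tau(x))\,d\tau$ directly to obtain the pointwise reparametrization $\phi^X_t=\phi^Y_{t/\alpha}$ by uniqueness of the RLF, and then only reuses the semigroup manipulation from Proposition \ref{prop:commu-nonzeroset} (commutativity reduces to $\alpha\circ\phi^Y_s=\alpha$), not its closed-curve structure. So the extra machinery you propose (a steady density turning $Y$ into a locally Hamiltonian field on $\{H_W=0\}$) is unnecessary; apart from that, your decomposition $\{X=0\}\cup\{Y=0\}$ versus $\{X=0\}$ and the symmetric choice $X=\alpha Y$ versus $Y=\alpha X$ are cosmetic differences.
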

\begin{proof}[Proof of Theorem \ref{thm:main:n:i}]

    Since $\phi^X_t,\phi^Y_s\in W^{1,p}(\Omega)$, by the result in \cite{ColomboTione} we have the commutativity of the two flows in that set, being invariant for the flows (Proposition \ref{thm:eq:riccardo}). In particular, if $\L^2(\Omega^c)=0$, the proof is concluded. 
    
    If not,  we observe that $W=0$ a.e. in the set $\Omega^c$, (since the gradient of Sobolev functions is strongly local), which in particular implies that
    \begin{equation}\label{eq:uguaglianza:div}
        \dive X Y=\dive Y X\quad\text{a.e. in }\Omega^c,
    \end{equation} 
    We prove now the commutativity of flows in the open set $\lbrace X=0\rbrace^c$ (while in the set $\lbrace X=0\rbrace$ it is a consequence of Gronwall's lemma as done in Remark \ref{rmk:gronwall}). The condition $X\cdot Y^\perp=0$ implies that in the set $\lbrace X=0\rbrace^c$ there exists a continuous function $\alpha$  with $Y=\alpha X$ and $\alpha\in W^{1,p}_{\text{loc}}(\lbrace X=0\rbrace^c)$ defined as:
    \begin{equation*}
        \alpha(x)=\frac{X\cdot Y}{|X|^2}.
    \end{equation*}
    Moreover, equation \eqref{eq:uguaglianza:div}, implies that $(\nabla\alpha\cdot X) X=0$, which in turn yields $\nabla \alpha\cdot X=0$ a.e. in the set $\lbrace Y=0\rbrace^c\cup \lbrace X=0\rbrace^c$.
    
    Always by a Gronwall-type argument, condition $ \nabla \alpha \cdot X=0$, together with the continuity of the function $\alpha$, implies that the the function $\alpha$ is constant along the trajectories of the vector field $X$ (up to a set of measure zero). Finally we observe that, if $t$ is fixed, then 
   \begin{align*}
       \phi^X_t(x)&=x+\int_0^t X(\phi^X_\tau(x))d\tau=x+\frac{1}{\alpha}\int_0^t \alpha X(\phi^X_\tau(x))d\tau=x+\frac{1}{\alpha}\int_0^t Y(\phi^X_\tau(x))d\tau\\&= x+\int_0^{\frac{t}{\alpha}}Y(\phi^X_{\alpha\tau}(x))d\tau,
   \end{align*}
   which yields $\phi^X_t=\phi^Y_{\frac{t}{\alpha}}$. The commutativity follows by the same argument of the proof of Proposition \ref{prop:commu-nonzeroset}.
\end{proof}

\section{Some remarks on BV vector fields}\label{Sect:BV}
This section is devoted to the proof of Proposition \ref{thm:comm-BV}, which we carry out by adapting the one in \cite{ColomboTione} to our case. We first briefly discuss the definition of Lie bracket we rely on in this work and we show that this definition is still suitable if we consider $\BV$ vector fields. More precisely, we point out that, if we identify $X,Y$  with their precise representative, the Lie bracket defined in \ref{eq:def:lie:brack} reduces to the distributional one given in \cite{ColomboTione}, namely
    \begin{equation*}
        [X,Y]\doteq DYX-DXY,\quad\text{distributionally}.
    \end{equation*}
    Indeed, by the Leibniz rule \eqref{eq:leibniz-rule}, we can rewrite the first term in \eqref{eq:def:lie:brack} as follows
    \begin{equation*}
  (X,D \varphi Y) =\sum_{i,j=1}^n \partial_j \left(     X^i \varphi^i Y^j\right)-(X^*,\varphi) \dive Y-(DX Y^*, \varphi),
    \end{equation*}
    where $\varphi$ is a smooth test function. 
 Integrating the previous expression and keeping in mind that $X=X^*$ $\L^n$-a.e., we obtain
\begin{equation*}
    \int_{\R^n}(X,D \varphi Y) dz = - \int_{\R^n}(X,\varphi) \dive Y dz - \sum_{i,j=1}^n\int_{\R^n} \varphi^i (Y^*)^j dD_jX^i .
\end{equation*}
Similarly, we have
\begin{equation*}
    \int_{\R^n}(Y,D \varphi X) dz = - \int_{\R^n}(Y,\varphi) \dive X dz - \sum_{i,j=1}^n\int_{\R^n} \varphi^i (X^*)^j d D_jY^i,
\end{equation*}
and therefore
\begin{equation*}
    \int_{\R^n}([X,Y], \varphi) dz = \sum_{i,j=1}^n\int_{\R^n} \varphi^i (X^*)^j d D_jY^i-\sum_{i,j=1}^n\int_{\R^n} \varphi^i (Y^*)^j dD_jX^i.
\end{equation*}
For this reason, in the following, we will always identify $X$ and $Y$ with their precise representatives.

\smallskip

We finally remark that, in the smooth setting, the commutativity of the flows $\phi_t^X$, $\phi_s^Y$ is equivalent to the following property
\begin{equation}\label{eq:der-smooth}
    D \phi_t^X(z)Y(z)=Y(\phi_t^X(z)), \quad \forall  z \in \R^n, t \in \R,
\end{equation}
{where the equality is obtained deriving in time both left and right hand side and observing that, thanks to the assumption $[X,Y]=0$, they satisfy the same differential equation with the same initial data. In the $\BV$ setting, it is not at all clear the meaning that should be given to this equivalence.}
As in \cite{ColomboTione}, Equation \eqref{eq:der-smooth} motivates the definition of the following distribution, already introduced in \cite{ColomboTione}:
\begin{equation}\label{eq:distr1}
    T_t[X,Y](\varphi) \doteq \int_{\R^n}(Y (\phi_t^X),\varphi)+(\phi_t^X, D\varphi Y)+ \dive Y (\varphi,\phi_t^X)dz, \quad \varphi \in C_c^\infty(\R^n,\R^n)
\end{equation}
which clearly equals $Y(\phi_t^X(z))-D \phi_t^X(z)Y(z)$ if $X$ and $Y$ are smooth vector fields. We consider also the following family of distributions as in \cite{ColomboTione}
\begin{equation}\label{eq:distr2}
    T_{t,s}[X,Y](\varphi) \doteq \int_{\R^n}(Y (\phi_t^X \circ \phi_s^Y),\varphi)+(\phi_t^X \circ \phi_s^Y, D\varphi Y)+ \dive Y (\varphi,\phi_t^X \circ \phi_s^Y)dz, \quad \varphi \in C_c^\infty(\R^n,\R^n)
\end{equation}
which represents the function $Y(\phi_t^X \circ \phi_s^Y)(z)-D (\phi_t^X \circ \phi_s^Y)(z)Y(z)$ in the smooth setting. The distributions defined in \eqref{eq:distr1} and \eqref{eq:distr2} play a crucial role in our analysis since they serve as an intermediate step in proving the following proposition.

\begin{proposition}\label{prop:commutativity}
 Then the following hold:
\begin{equation}
    \phi^X_t\circ\phi^Y_s=\phi^Y_s\circ\phi^X_t,\quad\forall t,s\in\R\text{ and for a.e. }x\in\R^n\implies T_{t,s}=0 \quad\forall t,s\in\R\implies T_{t}=0 \quad\forall t\in\R.
\end{equation}
In addition, $T_t=0$ for every $t \in \R$ implies $[X,Y]=0$.
\end{proposition}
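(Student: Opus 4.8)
The plan is to prove the chain of implications in Proposition \ref{prop:commutativity} one arrow at a time, working entirely at the level of the distributions $T_t$ and $T_{t,s}$ defined in \eqref{eq:distr1} and \eqref{eq:distr2}, following the strategy of \cite{ColomboTione}. For the first implication, I would start from the hypothesis $\phi^X_t\circ\phi^Y_s=\phi^Y_s\circ\phi^X_t$ a.e.\ and substitute this equality into the definition of $T_{t,s}$. Concretely, in \eqref{eq:distr2} I would replace $\phi^X_t\circ\phi^Y_s$ by $\phi^Y_s\circ\phi^X_t$ in each of the three terms, and then try to recognize the resulting expression as the pushforward under $\phi^Y_s$ of something that vanishes. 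The natural tool here is the change of variables \eqref{eq:change-of-var} together with the measure-preserving (up to bounded density) property of the RLF from Definition \ref{def:RLF}; the point is that testing against $\varphi$ after precomposing with $\phi^Y_s$ should convert $T_{t,s}$ into an integral that, by the defining property of the Regular Lagrangian Flow of $Y$, is identically zero. This is essentially the statement that $Y$ is tangent to its own flow in the weak sense encoded by the RLF.

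For the second implication, $T_{t,s}=0$ for all $s,t$ implies $T_t=0$ for all $t$, the natural route is to take the limit $s\to 0$. Since $\phi^Y_s\to\phi^Y_0=\mathrm{id}$, the stability of RLFs (Theorem \ref{Theo:stability}) and the strong $L^1$-convergence $\phi^Y_s\to\mathrm{id}$ should give $\phi^X_t\circ\phi^Y_s\to\phi^X_t$ in $L^1_{\mathrm{loc}}$ as $s\to 0$. Because the test function $\varphi$, its derivative, $Y$ and $\dive Y$ are all fixed and bounded (with $\varphi$ compactly supported), each of the three terms in \eqref{eq:distr2} converges to the corresponding term in \eqref{eq:distr1}, so $T_{t,s}(\varphi)\to T_t(\varphi)$ and hence $T_t=0$. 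The only care needed is the passage to the limit inside the term $(Y(\phi^X_t\circ\phi^Y_s),\varphi)$, where one must control $Y$ evaluated along a converging sequence of flows; here the near incompressibility / bounded compression of the flows lets me bound $Y\circ(\phi^X_t\circ\phi^Y_s)$ in $L^1_{\mathrm{loc}}$ and extract convergence, possibly along a subsequence, which suffices since the limit $T_t(\varphi)$ is independent of the subsequence.

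For the final implication, $T_t=0$ for all $t$ implies $[X,Y]=0$, I would differentiate in $t$ at $t=0$. At $t=0$ we have $\phi^X_0=\mathrm{id}$, so $T_0(\varphi)=\int (Y,\varphi)+(z,D\varphi Y)+\dive Y(\varphi,z)\,dz$, and the claim is that the $t$-derivative of $T_t(\varphi)$ at $t=0$ reproduces exactly the bilinear expression \eqref{eq:def:lie:brack} defining $[X,Y]$. The heuristic is that $\frac{d}{dt}\big|_{t=0}\phi^X_t=X$ and $\frac{d}{dt}\big|_{t=0}Y(\phi^X_t)=DY\,X$, so differentiating the three terms of \eqref{eq:distr1} produces $(DY\,X,\varphi)$, $(X,D\varphi Y)$ and $\dive Y(\varphi,X)$, and after the distributional integration by parts transferring $DY\,X$ onto the test function one recovers precisely the sum $(X,D\varphi Y)+\dive Y(X,\varphi)-(Y,D\varphi X)-\dive X(Y,\varphi)$ appearing in Definition \ref{def:Lie:bracket}. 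Since $T_t\equiv 0$ forces this $t$-derivative to vanish, $[X,Y]=0$ as a distribution.

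The main obstacle I anticipate is justifying the differentiation in $t$ rigorously in this last step: the map $t\mapsto\phi^X_t$ is not classically differentiable, and $Y$ is only Sobolev/$\BV$, so $\frac{d}{dt}Y(\phi^X_t)$ cannot be computed pointwise. The correct approach is to avoid pointwise time-differentiation altogether and instead test $T_t(\varphi)=0$ against a time cutoff, i.e.\ integrate $\int_{\R} T_t(\varphi)\psi(t)\,dt=0$ for $\psi\in C^1_c(\R)$, and then move the $t$-derivative onto $\psi$ by integration by parts in time, using the distributional ODE \eqref{eq:Cauchy:RLF} satisfied by $\phi^X_t$ to evaluate $\partial_t\phi^X_t=X\circ\phi^X_t$ inside the integral. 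This weak-in-time formulation, combined with sending the support of $\psi$ to concentrate at $t=0$, is exactly the device used in \cite{ColomboTione}, and it is what makes the heuristic derivative computation above legitimate without requiring any differentiability of the flow beyond the RLF property.
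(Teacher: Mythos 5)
Your overall architecture matches the paper's (which follows \cite{ColomboTione}), and the first two implications are essentially fine. For $T_{t,s}=0\Rightarrow T_t=0$ the paper simply evaluates at $s=0$: since $\phi^Y_0=\mathrm{id}$, one has $T_{t,0}=T_t$ identically, so your limiting argument via the stability theorem is an unnecessary detour (and would itself require justifying continuity of $T_{t,s}$ in $s$). For the first implication your sketch captures the right mechanism --- the hypothesis makes $s\mapsto\phi^X_t\circ\phi^Y_s$ an absolutely continuous solution of $\partial_s(\phi^X_t\circ\phi^Y_s)=Y(\phi^X_t\circ\phi^Y_s)$, which one then rewrites via the change of variables $\tilde z=\phi^Y_s(z)$ and the density $\xi$ --- though you omit the last step of the paper's proof, where the resulting identity, obtained only for a.e.\ $s$, is upgraded to every $s$ by a mollification/continuity argument in $s$.

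The genuine gap is in the last implication. You correctly identify that $\tfrac{d}{dt}Y(\phi^X_t)$ cannot be computed by the chain rule, but your proposed fix --- testing against a time cutoff $\psi(t)$ and integrating by parts in $t$ using $\partial_t\phi^X_t=X\circ\phi^X_t$ --- does not resolve it. That device handles the two terms $(\phi^X_t,D\varphi Y)$ and $\dive Y(\varphi,\phi^X_t)$, where the integrand is linear in $\phi^X_t$, but for $\int(Y(\phi^X_t),\varphi)\,\psi'(t)\,dt$ you are back to needing $\partial_t\bigl[Y\circ\phi^X_t\bigr]$, i.e.\ exactly the chain rule for a Sobolev/BV function composed with the flow that you set out to avoid; writing $(DY\,X,\varphi)$ and then integrating by parts in space presupposes it. The paper's key point is that it never differentiates $Y\circ\phi^X_t$: one performs the change of variables $\tilde z=\phi^X_t(z)$ in $\int(Y(\phi^X_t),\varphi)$, so that $Y$ appears as $Y(\tilde z)$, frozen, and the entire $t$-dependence sits in $\varphi(\phi^X_{-t}(\tilde z))$ and in the density $\xi(-t,\tilde z)$, which are differentiated using the flow equation and the Liouville equation \eqref{eq:Liouville}. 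This yields
\begin{equation*}
\frac{d}{dt}\int_{\R^{n}}(Y(\phi^X_t),\beta)\,dz=-\int_{\R^{n}}(Y(\phi^X_t),D\beta\,X)\,dz-\int_{\R^{n}}(Y(\phi^X_t),\beta)\,\dive X\,dz,
\end{equation*}
an identity in which no derivative of $Y$ appears. Combined with the direct differentiation of the other two terms this gives \eqref{eq:der-distr}, and evaluating at $t=0$ produces exactly the right-hand side of \eqref{eq:def:lie:brack}. Without this change-of-variables step your argument for the final implication does not close.
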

\begin{proof}
This part of the proof is identical to the one in \cite{ColomboTione} and we rewrite it for completeness. The only changes are in the proof of the last implication, where we make use of the definition of distributional Lie Bracket.
 We first point out that, if $T_{t,s}=0$ for every $t,s \in \R$, then it is sufficient to evaluate $T_{t,s}$ at $s=0$ to obtain $T_t=0$ for every $t \in \R$.

\noindent
\textbf{Proof of} $\phi_t^X \circ \phi_s^Y =\phi_s^Y \circ \phi_t^X, \forall t,s \, \implies \, T_{t,s}=0, \forall t,s$. As a first step, we aim at proving that
\begin{equation}\label{eq:double-implication-flows}
    \phi_t^X \circ \phi_s^Y =\phi_s^Y \circ \phi_t^X, \forall t,s \, \implies \, \partial_s (\phi_t^X \circ \phi_s^Y )=Y(\phi_t^X \circ \phi_s^Y ), \, \text{for a.e. $s \in \R$}.
\end{equation}
Indeed, if $ \phi_t^X \circ \phi_s^Y =\phi_s^Y \circ \phi_t^X, \forall t,s$, then $s \mapsto  \phi_t^X \circ \phi_s^Y$ is an absolutely continuous function which satisfies
\begin{equation*}
     \partial_s (\phi_t^X \circ \phi_s^Y )= \partial_s ( \phi_s^Y \circ \phi_t^X )=Y( \phi_s^Y \circ \phi_t^X)=Y(\phi_t^X \circ \phi_s^Y ), \quad \text{for a.e. $s \in \R$}.
\end{equation*}
We use this equivalence \eqref{eq:double-implication-flows} to prove the desired claim. We first observe that equation $ \partial_s (\phi_t^X \circ \phi_s^Y )=Y(\phi_t^X \circ \phi_s^Y )$ needs to be understood in the following sense
\begin{equation*} 
    -\int_{\R^{n+1}}\left(\partial_s\varphi(s,z),\phi_t^X \circ \phi_s^Y\right) \, ds \, dz=\int_{\R^{n+1}}\left(Y(\phi_t^X \circ \phi_s^Y), \varphi(s,z)\right) \, ds \, dz, \quad \forall \varphi \in C_c^\infty(\R^{n+1}).
\end{equation*}
We now perform the change of variable $\tilde{z}=\phi_s^Y(z)$ and rewrite the previous equation as follows
\begin{align}\label{eq:change-var-tildez}
-\int_{\R^{n+1}}\left(\partial_s\varphi(s,\phi_{-s}^Y(\tilde{z})),\phi_t^X (\tilde{z})\right)\, \xi(-s,\tilde z) \, ds \, d\tilde{z}=\int_{\R^{n+1}}\left(Y(\phi_t^X \circ \phi_s^Y), \varphi(s,z)\right) \, ds \, dz, 
\end{align}
We now exploit the change of variable formula \eqref{eq:change-of-var} to compute
\begin{equation*}
\int_{\R^{n+1}}\left(\partial_s\left( \varphi(s,\phi_{-s}^Y(\tilde{z}))\right),\phi_t^X (\tilde{z})\right)\, \xi(-s,\tilde z) \, ds \, d\tilde{z}=\int_{\R^{n+1}}\left(\varphi(s,\phi_{-s}^Y(\tilde{z})),\phi_t^X (\tilde{z})\right)\dive Y \circ \phi_{-s}^Y\, \xi(-s,\tilde z) \, ds \,.
\end{equation*}
Taking advantage of the previous equation, together with
\begin{equation*}
\partial_s\left(\varphi(s,\phi_{-s}^Y(\tilde{z}))\right)=\partial_s\varphi(s,\phi_{-s}^Y(\tilde{z}))- D\varphi(s,\phi_{-s}^Y(\tilde{z}))Y(\phi_{-s}^Y(\tilde z)),
\end{equation*}
we can rewrite \eqref{eq:change-var-tildez} as follows
\begin{align*}
&-\int_{\R^{n+1}}\left(D\varphi(s,\phi_{-s}^Y(\tilde{z})) Y\left(\phi_{-s}^Y(\tilde{z})\right),\phi_t^X (\tilde{z})\right)\, \xi(-s,\tilde z) \, ds \, d\tilde{z}\\&\qquad-\int_{\R^{n+1}}\left(\varphi(s,\phi_{-s}^Y(\tilde{z})),\phi_t^X (\tilde{z})\right)\dive Y \circ \phi_{-s}^Y\, \xi(-s,\tilde z) \, ds \, d\tilde{z}=\int_{\R^{n+1}}\left(Y(\phi_t^X \circ \phi_s^Y), \varphi(s,z)\right) \, ds \, dz.
\end{align*}
Performing back the change of variable, we obtain
\begin{align*}
&-\int_{\R^{n+1}}\left(D\varphi(s,z) Y,\phi_t^X \circ \phi_s^Y \right) \, ds \, dz\\&\qquad-\int_{\R^{n+1}}\left(\varphi(s,z),\phi_t^X \circ \phi_s^Y \right)\dive Y \, ds \,dz=\int_{\R^{n+1}}\left(Y(\phi_t^X \circ \phi_s^Y), \varphi(s,z)\right) \, ds \, dz.
\end{align*}
If we now choose test functions of the form $\varphi=\alpha(s)\beta(z)$, the previous equality is equivalent to state that for every $\beta \in C_c^\infty(\R)$, $t \in \R$ and for a.e. $s \in \R$, there holds
\begin{align}\label{eq:beta}
-\int_{\R^{n}}\left(D\beta(z) Y,\phi_t^X \circ \phi_s^Y \right) \, dz-\int_{\R^{n}}\left(\beta(z),\phi_t^X \circ \phi_s^Y \right)\dive Y  \,dz=\int_{\R^{n}}\left(Y(\phi_t^X \circ \phi_s^Y), \beta(z)\right) \, dz.
\end{align}
We finally prove that \eqref{eq:beta} holds true for every $s \in \R$. To this end, we first observe that the terms that do not involve $\dive Y$ are absolutely continuous in $s$, as can be seen performing again the change of variable $\tilde{z}=\phi_s^Y(z)$ and using the dominated convergence theorem. At the same time, the term involving $\dive Y$ in equality \eqref{eq:beta} is also continuous in $s$, as, for every $\epsilon >0$, we can rewrite it as
\begin{align*}
  &-\int_{\R^{n}}\left(\beta(z),\phi_t^X \circ \phi_s^Y \right)\dive Y  \,d z =\\ &\qquad
  -\int_{\R^{n}}\left(\beta(z),\left(\phi_t^X \ast \rho_\epsilon \right)\circ \phi_s^Y \right)\dive Y  \,d z -\int_{\R^{n}}\left(\beta(z),\left(\phi_t^X-\phi_t^X \ast \rho_\epsilon \right)\circ \phi_s^Y \right)\dive Y  \,d z,
\end{align*}
where $\rho_\epsilon$ denotes, as above, a mollifier. We now point out that the first term in the previous equality is continuous in $s$, while the second term converges to zero (as $\epsilon \to 0$) locally uniformly in $s$. Thus, equality \eqref{eq:beta} holds true for every $s \in \R$. We eventually conclude observing that \eqref{eq:beta} is equivalent to $T_{t,s} \equiv 0$ for every $t,s \in \R$.

\medskip
\noindent
\textbf{Proof of} $ T_t =0, \forall t \, \Rightarrow \, [X,Y]=0.$ We aim at showing that
\begin{equation}\label{eq:deriv-T_t}
    \frac{d}{dt}T_t[X,Y]\Big|_{t=0}=[X,Y],
\end{equation}
which clearly implies the desired implication. To this end, we first observe that, performing the change of variable $\tilde{z}=\phi_t^X(z)$ and exploiting again \eqref{eq:change-of-var}, we can write
\begin{align*}
\frac{d}{dt}\int_{\R^{n+1}}(Y (\phi_t^X(z)),\varphi(s,z))\,dz\,ds=&-\int_{\R^{n+1}}(Y(\tilde{z}),D \varphi(s,\phi_{-t}^X(\Tilde{z}))X(\phi_{-t}^X(\Tilde{z})))\,\xi(-t,\Tilde{z})\,d\Tilde{z}\,ds\\
&\quad - \int_{\R^{n+1}}(Y(\tilde{z}), \varphi(s,\phi_{-t}^X(\Tilde{z})))\dive X(\phi_{-t}^X(\Tilde{z}))\,\xi(-t,\Tilde{z})\,d\Tilde{z}\,ds.
\end{align*}
for every $\varphi \in C_c^\infty(\R^{n+1},\R^n)$. Performing back the change of variable in the previous integrals, we infer that, for every $\varphi \in C_c^\infty(\R^{n+1},\R^{n+1})$, there holds
\begin{align}\label{eq:deriv-Y-phi}
\frac{d}{dt}\int_{\R^{n+1}}(Y (\phi_t^X),\varphi)dzds=&-\int_{\R^{n+1}}(Y(\phi_t^X(z)),D \varphi(z)X(z))dzds- \int_{\R^{n+1}}(Y(\phi_t^X(z)), \varphi(z))\dive X dzds.
\end{align}
In particular, with the choice of functions $\varphi(s,z)=\alpha(s)\beta(z)$ as done in \cite{ColomboTione}, for every $\beta\in\mathcal{C}^\infty_c(\R^{n},\R^n)$, it holds that 
\begin{align}\label{eq:deriv-Y-phi}
\frac{d}{dt}\int_{\R^{n}}(Y (\phi_t^X),\beta)dz=&-\int_{\R^{n}}(Y(\phi_t^X(z)),D \beta(z)X(z))dz- \int_{\R^n}(Y(\phi_t^X(z)), \beta(z))\dive X dz.
\end{align}

We next observe that, for any test function $\varphi \in C^\infty_c(\R^n,\R^n)$, we can differentiate \eqref{eq:distr1} under the integral sign with respect to $t$ and exploit \eqref{eq:deriv-Y-phi} to obtain
\begin{equation}\label{eq:der-distr}
\begin{split}
    \frac{d}{dt} T_t[X,Y](\varphi) = -\int_{\R^n}(Y(\phi_t^X),D \varphi X)dz- \int_{\R^n}(Y(\phi_t^X), \varphi)\dive X dz\\+\int_{\R^n}(X (\phi_t^X), D\varphi Y)dz+ \int_{\R^n}\dive Y (\varphi,X ( \phi_t^X))dz.
    \end{split}
\end{equation}
Since $T_t[X,Y]=0$ for every $t$, Claim \eqref{eq:deriv-T_t} then follows by evaluating \eqref{eq:der-distr} at $t=0$ and by keeping in mind our definition of Lie bracket in \eqref{eq:def:lie:brack}. This concludes the proof of Proposition \ref{prop:commutativity}.
\end{proof}
We conclude by pointing out that the expression \eqref{eq:der-distr} should encode, in our opinion, the distributional version of the function $[X,Y]\circ\phi^X_t$, that we leave as an open question.

\printbibliography

\end{document}